\documentclass[11pt,reqno]{amsart}

\usepackage[T1]{fontenc}
\usepackage{newtxtext,newtxmath}
\usepackage{microtype}
\usepackage{mathtools}
\usepackage{enumitem}
\usepackage{array}
\usepackage[hidelinks]{hyperref}
\usepackage[margin=1.15in]{geometry}

\newtheorem{theorem}{Theorem}[section]

\newtheorem{conjecture}[theorem]{Conjecture}
\newtheorem{proposition}[theorem]{Proposition}
\newtheorem{lemma}[theorem]{Lemma}
\theoremstyle{definition}
\newtheorem{definition}[theorem]{Definition}
\theoremstyle{remark}

\title[Irregular Diophantine quadruples]{There Are Only Finitely Many Irregular Diophantine Quadruples}
\author{Bo He}
\address[Bo He]{Mathematisches Institut, Georg-August-Universit\"at G\"ottingen, 37073 G\"ottingen, Germany}
\email[Bo He]{bo.he@mathematik.uni-goettingen.de}
\address[Bo He]{Institute of Applied Mathematics, Aba Teachers University, Wenchuan, Sichuan 623000, China}
\email[Bo He]{bhe@live.cn}
\author{Alain Togb\'e}
\address[Alain Togb\'e]{Purdue University Northwest, Westville, Indiana 46391, United States of America}
\email[Alain Togb\'e]{atogbe@pnw.edu}
\date{}

\subjclass[2020]{11D09, 11B37, 11J68, 11J86, 11Y65}
\keywords{Diophantine tuples, regular Diophantine quadruples, Pell equations, linear forms in logarithms, descent}

\begin{document}

\begin{abstract}
A set of $m$ distinct positive integers is called a Diophantine
$m$-tuple if the product of any two distinct elements, increased by one,
is a perfect square. A Diophantine quadruple $\{a,b,c,d\}$ with
$a<b<c<d$ is called regular if $$d=a+b+c+2abc+2\sqrt{(ab+1)(ac+1)(bc+1)}.$$
We prove that every irregular Diophantine quadruple satisfies
$c<10^{338}$ and $d<10^{10^{24}}$.
\end{abstract}

\maketitle

\section{Introduction}

A set of $m$ distinct positive integers $\{a_1,a_2,\ldots,a_m\}$ is
called a \emph{Diophantine $m$-tuple} if $a_i a_j+1$ is a square for
all $1\leq i<j\leq m$. The problem originates with Diophantus, who
exhibited the rational quadruple $\{1/16,33/16,17/4,105/16\}$.
Fermat later found the first integral example $\{1,3,8,120\}$.
See \cite{Diophantus,DicksonHistory}. Euler showed in the eighteenth
century that infinitely many integral Diophantine quadruples exist.

Every Diophantine triple $\{a,b,c\}$ with $a<b<c$ has the regular extension
\begin{equation}\label{eq:dplus-introduction}
 d_+(a,b,c)=a+b+c+2abc
 +2\sqrt{(ab+1)(ac+1)(bc+1)}.
\end{equation}
This construction is due to Arkin, Hoggatt and Straus
\cite{ArkinHoggattStraus}. A quadruple $\{a,b,c,d\}$ with $a<b<c<d$ is
called \emph{regular} if $d=d_+(a,b,c)$. Equivalently, it satisfies
\begin{equation}\label{eq:regularidentity}
 (a+b-c-d)^2=4(ab+1)(cd+1).
\end{equation}
For the structure of regular quadruples, see
\cite{AssafGueron,GibbsRegular}.
Regarded as a quadratic equation in $d$, \eqref{eq:regularidentity}
has the other root
\begin{equation}\label{eq:dminus-introduction}
 d_-(a,b,c)=a+b+c+2abc
 -2\sqrt{(ab+1)(ac+1)(bc+1)}.
\end{equation}
The triple $\{a,b,c\}$ with $a<b<c$ is called an \emph{Euler triple}
if $c=a+b+2\sqrt{ab+1}$, or equivalently if $d_-(a,b,c)=0$.

The following conjecture was stated explicitly by Dujella and Peth\H{o}
\cite{DujellaPetho}.

\begin{conjecture}[Regularity conjecture]\label{conj:regularity}
Every Diophantine quadruple $\{a,b,c,d\}$ with $a<b<c<d$
satisfies $d=d_+(a,b,c)$.
\end{conjecture}

The regularity conjecture implies that there is no Diophantine
quintuple, since a quintuple would give two distinct larger extensions
of the same triple. Dujella first proved that a Diophantine tuple has at
most eight elements \cite{DujellaAbsolute}. He then proved that there
are no sextuples and only finitely many quintuples
\cite{DujellaFiniteness}. Fujita showed that any quintuple would contain
a regular quadruple \cite{FujitaQuintupleRegular}.
The authors and Ziegler proved that no quintuple exists \cite{HeTogbeZiegler}.

The nonexistence of quintuples does not settle the regularity conjecture.
For a fixed triple $\{a,b,c\}$, it excludes two distinct extensions
$d_1,d_2>c$ for which $d_1d_2+1$ is a square. It leaves open the
possibility of another extension $d\ne d_+(a,b,c)$ when
$d\,d_+(a,b,c)+1$ is not a square.

We prove the following effective finiteness theorem.

\begin{theorem}\label{thm:main}
There are only finitely many irregular Diophantine quadruples.
More precisely, every irregular Diophantine quadruple $\{a,b,c,d\}$
with $a<b<c<d$ satisfies
\[
 c<10^{338},\qquad d<10^{10^{24}}.
\]
\end{theorem}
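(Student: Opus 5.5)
We follow the standard Baker–Davenport framework for extending a Diophantine triple, replacing the final "two extensions give a quintuple" step with a direct gap argument. Let $\{a,b,c,d\}$ be an irregular quadruple with $a<b<c<d$, and write
\[
ad+1=x^2,\qquad bd+1=y^2,\qquad cd+1=z^2 .
\]
Eliminating $d$ yields the simultaneous Pellian equations
\[
az^2-cx^2=a-c,\qquad bz^2-cy^2=b-c .
\]
Their solutions lie in finitely many classes,
\[
z=v_m=w_n,
\]
where $(v_m)$ and $(w_n)$ are binary recurrences attached to $\sqrt{ac+1}$ and $\sqrt{bc+1}$. The fundamental solutions $(z_0,x_0)$ and $(z_1,y_1)$ are bounded in terms of $a,b,c$ (roughly $|z_0|<c^{3/4}$) by the usual Nagell-type estimates.

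\emph{Step 1: exclude small indices.} The small indices $m,n\le 2$ produce exactly $d=d_\pm(a,b,c)$ and $d=0$. So irregularity, together with $d>c$, forces $d\ne d_+$, and $d_-<c$ is excluded as well. Hence $n\ge 3$ and $m\ge 3$.

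\emph{Step 2: gap principle.} Using the congruence method of Dujella and Peth\H{o} (expanding $v_{2m}$ and $w_{2n}$ modulo $c$ and $c^2$), we derive a lower bound of the shape $m,n\gg c^{1/2}/b^{1/2}$, or after Fujita's refinement $n\gg c^{1/4}$ in terms of $c$. This gives a strong gap: any irregular $d$ satisfies $d>c^{\kappa n}$ with $n$ polynomially large in $c$.

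\emph{Step 3: upper bound from linear forms in logarithms.} The equation $v_m=w_n$ gives
\[
0<\Lambda=m\log(\sqrt{a}\,\alpha)-n\log(\sqrt{b}\,\beta)+\log\mu\ll \beta^{-2n},
\]
with $\alpha=\sqrt{ac}+\sqrt{ac+1}$ and $\beta=\sqrt{bc}+\sqrt{bc+1}$. We apply Matveev's theorem, or better Laurent's two-logarithm bound after a standard reduction, to get
\[
n\ll (\log c)^2\log n .
\]
Comparing this with the polynomial lower bound $n\gg c^{\delta}$ from Step 2 yields an explicit bound on $c$. Careful explicit constants should bring this down to $c<10^{338}$. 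Then
\[
d=(z^2-1)/c\le \beta^{4n},
\]
and with $n$ bounded by roughly $(\log c)^2$ times a logarithm this gives the stated bound $d<10^{10^{24}}$.

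\emph{Main obstacle.} The hard part is Step 2 in full generality. Classical gap results give $n\gg(c/b)^{1/2}$, which is useless when $b$ is close to $c$. To handle this we split into cases according to the relative sizes of $a,b,c$. In the case $c<b^{1+\epsilon}$ we would descend: from an irregular $\{a,b,c,d\}$ we pass to a smaller triple, as in the standard $d_-$ descent of Euler/regular triples. Here $c=d_+(a,b,\cdot)$ or $c$ is itself built from a smaller triple, which forces $b\ge 4ab$-type growth. We then run the gap argument on the descended triple.

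Making these cases explicit is where the constant $338$ is lost. If the descent fails to give a polynomial gap in some range, the fallback is a hypergeometric (Bennett/Thue-type) bound on $n$ for $x,y,z$ close to $\sqrt{a/c}$ and $\sqrt{b/c}$, which works when $b\ll c^{1-\epsilon}$ fails in the opposite direction.
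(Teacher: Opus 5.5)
Your Steps 1 and 3 and the final estimate for $d$ follow the paper's skeleton. The paper excludes small indices, applies Matveev's theorem to the three-logarithm form $m'\log\beta-n'\log\gamma+\log\delta$ in $\mathbf Q(\sqrt{ab},\sqrt{ac})$ to get $m'\ll(\log c)^2\log m'$, and then uses $d<c(2c)^{2m'}$. One correction to Step 3: the logarithms are of the units themselves, and $\log\delta$ is genuinely independent of them, so a two-logarithm bound is not directly available.

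The genuine gap is Step 2, which is where the whole theorem lives. You cannot quote a uniform bound $n\gg c^{1/4}$, or $n\gg c^{\delta}$ for any fixed $\delta>0$. Congruence gap principles of the kind you cite give bounds like $n\gg\sqrt{c/b}$, or need $c$ large relative to $b$ (as for $\{a,b,d_+\}$ in the quintuple problem). For a non-Euler triple one only knows $c>4ab$, so $\sqrt{c/b}>2\sqrt a$, which is not a power of $c$ when $a$ is small. For an Euler triple $c=a+b+2r<4b$, and the bound is empty. If a uniform polynomial bound were available, finiteness would follow at once from Step 3. For the same reason, $338$ is not a matter of careful constants. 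It is dictated by the weakest exponent, $m>c^{1/16}/\sqrt3$ in the higher-degree type-II case, through $c<(\sqrt3\,m'/2)^{16}$ with $m'<1.5\cdot10^{21}$.

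Your proposed repairs do not close this gap.
\begin{itemize}
\item Descending to $\{a,b,e\}$ with $e=d_-(a,b,c)$ and running the gap argument there has no content. In general $ed+1$ is not a square, so $d$ gives no solution of the descended triple's Pell system.
\item Euler triples cannot be descended at all, and they are exactly where $c/b$ is bounded.
\item The hypergeometric fallback needs $c$ to be a large power of $b$. Irregular extensions are already excluded there (the paper quotes $c<200b^4$ from Cipu--Fujita--Miyazaki), so it does nothing for $c\approx b$.
\end{itemize}

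The missing idea is to stratify by the degree of $\{a,b,c\}$ and use the descent identities to sharpen congruences for the \emph{original} indices $m,n$.
\begin{itemize}
\item Euler triples: the paper brings in the third equation $ay^2-bx^2=a-b$ with index $l$. It shows $n\le m<l<2n$ and $m>(l-m)\sqrt{b/a}\log\alpha$, and merges congruences modulo $4a$ and $4b$ into one modulo $ab$. This gives $r<4m^4$, hence $m^5>c\log c/32$.
\item Degree one: it uses $c=4r(r\pm a)(b\pm r)$, so $b^2<c<16b^3$. Under $m\le c^{1/8}$ the congruences modulo $4c$ and $c$ become equalities. In type II this yields a Diophantine triple $\{a,b,D\}$ with $D<4ab$ and $\sqrt{bD+1}>8b$, which contradicts the dichotomy $D=a+b+2r$ or $D>4ab$.
\item Higher degree: it proves $b>8a$. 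Type I is handled by $n>\kappa\sqrt{c/b}$ when $c>b^{6/5}$, and by a cubic expansion modulo $16c^3$ otherwise. Type II uses $2rst\equiv a+b-e\pmod c$ and the second descent element $f$, producing congruences modulo $e$, $c$ and $b$ that size forces to be equalities.
\end{itemize}
Without arguments of this kind, your proof only shows that a bound $m\gg c^{\delta}$ would imply the theorem.
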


The descent introduced by the authors and Ziegler \cite{HeTogbeZiegler} replaces
the largest element of a non-Euler triple by $d_-$.
Repeating this step reaches an Euler triple. The number of steps is
called the \emph{degree} of the triple, so Euler triples have degree
zero. The precise definition and its basic properties are recalled in
Section~\ref{sec:descent}.
For triples admitting an irregular extension, the proof gives
$c<10^{98}$ in degree zero, $c<10^{162}$ in degree one, and
$c<10^{338}$ in higher degree. These bounds are not optimized.

The first important uniqueness result predates the conjecture. Baker and
Davenport \cite{BakerDavenport} proved in 1969 that $120$ is the unique
integer exceeding $8$ that extends $\{1,3,8\}$. Dujella
\cite{DujellaParametric} obtained the corresponding result for
$\{k-1,k+1,4k\}$ in 1997, and Dujella and Peth\H{o} \cite{DujellaPetho}
treated $\{1,3,c\}$ in 1998. Table~\ref{tab:parametric-regularity} lists further
cases. A pair in the table specifies the two smallest elements of the
quadruple. Parameters are restricted to values giving distinct positive
integers, and the pair or triple is assumed to be Diophantine.

\begin{table}[htbp]
\caption{Selected cases of the regularity conjecture.}
\label{tab:parametric-regularity}
\centering
\begingroup
\normalsize
\renewcommand{\arraystretch}{1.12}
\setlength{\tabcolsep}{4pt}
\begin{tabular}{@{}>{\raggedright\arraybackslash}p{0.66\linewidth}c@{}}
\hline
Pair or triple and restrictions & References\\
\hline
$\{k-1,k+1\}$
  & \cite{BugeaudDujellaMignotte,FujitaPair}\\
$\{k+1,4k,9k+3\}$
  & \cite{HeTogbeK4K9K}\\
$\{a,b\}$, $b<a+4\sqrt a$
  & \cite{FilipinFujitaTogbeExamples}\\
$\{k,4k\pm4\}$
  & \cite{FilipinFujitaTogbeExamples,HeEtAlPair}\\
$\{k,A^2k\pm2A,(A+1)^2k\pm2(A+1)\}$,\newline
$A,k\geq1$
  & \cite{CipuFujitaMignotte}\\
$\{1,b\}$, $b-1$ a prime power
  & \cite{HePinterTogbeYang}\\
$\{KA^2,4KA^4\pm4A\}$, $A\geq1$, $K\in\{1,2,3,4\}$
  & \cite{CipuFilipinFujitaPairs}\\
\hline
\end{tabular}
\endgroup
\end{table}

Ad\v{z}aga, Filipin and Jurasi\'c \cite{AdzagaFilipinJurasic} proved the
regularity conjecture for quadruples whose two smallest elements are
$2$ and $b$, with $b/2-1$ prime. The authors, jointly with Ad\'edji
and Pint\'er \cite{AdedjiHePinterTogbe}, obtained the corresponding
result for the pairs $\{a,3a\}$ and $\{a,8a\}$.

Earlier results of the authors \cite{HeTogbeFamilyI,HeTogbeFamilyII}
were extended by the two-parameter theorem of Cipu, Fujita and Mignotte
\cite{CipuFujitaMignotte}. Other results concern Fibonacci
and Pell families
\cite{DujellaHoggattBergum,ParkLeeFibonacci,AdedjiFilipinTogbePell},
canonical triples beginning with $KA^2$ and $4KA^4\pm4A$ for arbitrary
positive $A,K$ \cite{CipuFilipinFujitaFamily}, further ranges of pairs
\cite{CipuFilipinFujitaPairs}, and restricted three-parameter families
\cite{CipuFujitaMignotteThree}.

A second line of work concerns the number of larger extensions of a fixed triple.
Fujita and Miyazaki \cite{FujitaMiyazaki} obtained a bound of eleven
larger extensions, subsequently reduced to eight by Cipu, Fujita and
Miyazaki \cite{CipuFujitaMiyazaki}. Irregular extensions of a fixed pair
or triple were studied in \cite{FujitaFixed}, and extensions by smaller
elements in \cite{CipuDujellaFujita}.

Sections~\ref{sec:descent}--\ref{sec:congruences}
recall the descent and the associated Pell equations and
congruences. In Sections~\ref{sec:euler}--\ref{sec:higher-degree}, we
obtain lower bounds for the Pell indices in terms of $c$.
These bounds, combined with the common logarithmic estimate in
Section~\ref{sec:linear-form}, give the upper bounds for $c$ and $d$.

\section{Descent and degree}\label{sec:descent}

We use the descent introduced in \cite{HeTogbeZiegler}.
Let $T=\{a,b,c\}$ be a Diophantine triple with $a<b<c$, and write
\[
 ab+1=r^2,\qquad ac+1=s^2,\qquad bc+1=t^2,
\]
where $r,s,t>0$. Throughout the paper we put
\[
 \alpha=r+\sqrt{ab},\qquad
 \beta=s+\sqrt{ac},\qquad
 \gamma=t+\sqrt{bc}.
\]
Write $d_{\pm}(T)=d_{\pm}(a,b,c)$ for the regular roots introduced
in Section~1. A direct calculation gives
\begin{equation}\label{eq:dpm-squares}
\begin{aligned}
 ad_{\pm}+1&=(rs\pm at)^2,\\
 bd_{\pm}+1&=(rt\pm bs)^2,\\
 cd_{\pm}+1&=(cr\pm st)^2.
\end{aligned}
\end{equation}
Moreover, $0\leq d_-<c<d_+$. If $d_->0$, the square identities
exclude $d_-=a$ and $d_-=b$, since neither $a^2+1$ nor $b^2+1$
is a square. Thus $\{a,b,d_-\}$ is again a Diophantine triple.

\begin{lemma}[{\cite[Lemmas~1 and~2]{HeTogbeZiegler}}]\label{lem:basic-descent}
Let $\{a,b,c\}$ be a Diophantine triple with $a<b<c$.
Then either $c=a+b+2r$ or $c>4ab$. Moreover,
\[
 4abc+c<d_+(a,b,c)<4abc+4c.
\]
\end{lemma}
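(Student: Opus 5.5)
The plan is to apply the descent step once to $T=\{a,b,c\}$, which produces the triple $\{a,b,d_-\}$. Recall from Section~\ref{sec:descent} that $0\le d_-<c$, and that when $d_->0$ the set $\{a,b,d_-\}$ is again a Diophantine triple. We split according to whether $d_-=0$.

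\emph{Case $d_-=0$.} Then $T$ is an Euler triple, and by definition $c=a+b+2r$.

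\emph{Case $d_->0$.} The key algebraic input is the identity $d_+d_-$. Both roots satisfy \eqref{eq:regularidentity} viewed as a quadratic in $d$. Expanding, the quadratic is
\[
d^2-2(a+b+c+2abc)d+(a+b-c)^2-4(ab+1)=0 .
\]
Its product of roots is therefore
\[
d_+d_-=(c-a-b)^2-4(ab+1)=\bigl(c-a-b-2r\bigr)\bigl(c-a-b+2r\bigr).
\]
Since $\{a,b,d_-\}$ is a Diophantine triple, either $d_-=a+b+2r$ or $d_-$ exceeds the smaller elements in a controlled way. Rather than recurse, I would argue directly as follows. Because $d_->0$, the product $(c-a-b)^2-4(ab+1)$ is positive, so $c>a+b+2r$. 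Now $c$ is itself a root of the same quadratic with $d$ replaced, by the symmetry of \eqref{eq:regularidentity} in $c$ and $d$. Hence $c\,d'=(d_--a-b)^2-4(ab+1)$ for the other root $d'=d_+(a,b,d_-)$, and a cleaner route is to apply the second assertion to the smaller triple.

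Concretely, I would prove the inequality $4abc+c<d_+<4abc+4c$ first; it does not depend on the first assertion. We have $d_+=a+b+c+2abc+2rst$. Write
\[
st=\sqrt{(ac+1)(bc+1)}=\sqrt{abc^2+(a+b)c+1},
\]
which gives $c\sqrt{ab}<st<c\sqrt{ab}+\tfrac{a+b}{2\sqrt{ab}}+1$. Also $r=\sqrt{ab+1}$ satisfies $\sqrt{ab}<r<\sqrt{ab}+\frac{1}{2\sqrt{ab}}$. Multiplying these bounds, $2rst$ lies between $2abc$ and $2abc+c+O(a+b)$. This gives the lower bound $d_+>4abc+a+b+c>4abc+c$ immediately. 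For the upper bound we need $a+b+2(rst-abc)<3c$, which follows from the estimate above together with $c>b$.

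Returning to the dichotomy: if $d_->0$, then $c=d_+(a,b,d_-)$. This holds because $d_+$ and $d_-$, viewed as roots in $d$ of the relation symmetric in $c,d$, are swapped under the exchange $c\leftrightarrow d$. Since $\{a,b,d_-\}$ is a triple with largest element $d_->b$, the bound just proved, applied to it, gives $c>4abd_-+d_-\ge 4ab$. The only degenerate subcase is $d_-<b$, where the ordering in $\{a,b,d_-\}$ changes. In that subcase I would apply the bound to the reordered triple, with $b$ in place of its largest element, and still obtain $c>4a d_- b$ or similar, hence $c>4ab$.

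The main obstacle is exactly this ordering issue. The inequality $d_+>4\cdot(\text{product})$ must be applied with the product over all three elements, which is symmetric. Since $d_+(a,b,d_-)$ is symmetric in its three arguments and its formula gives $c>4abd_-+\dots\ge4ab$, the issue disappears provided the lower bound is proved symmetrically. I would therefore state the lower bound as $d_+(x,y,z)>4xyz$ for any three positive integers with pairwise products plus one square, with no ordering assumed.
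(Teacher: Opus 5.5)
Your argument for the first assertion and for the lower bound is sound in substance. Since $\sqrt{(xy+1)(xz+1)(yz+1)}>xyz$, one has $d_+(x,y,z)>x+y+z+4xyz$ for all positive $x,y,z$. So once $c=d_+(a,b,e)$ with $e=d_-\geq1$, you get $c>4abe\geq4ab$ with no ordering issue.

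The genuine gap is the upper bound $d_+<4abc+4c$. You claim it does not depend on the first assertion and follows from your estimates ``together with $c>b$''. It does not. Carried out precisely, your expansion gives $2rst<2abc+a+b+c+\frac1a+\frac1b+\frac1c+\frac1{abc}$, hence $d_+<4abc+2c+2(a+b)+2$. This yields $d_+<4abc+4c$ once $c\geq a+b+1$, but $c>b$ does not give that. Writing the error as ``$O(a+b)$'' hides exactly the coefficient that matters. In fact no argument using only the real relations $r^2=ab+1$, $s^2=ac+1$, $t^2=bc+1$ and $c>b$ can work: for $(a,b,c)=(10,11,12)$ with real square roots, $d_+\approx5346>5328=4abc+4c$. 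The upper bound needs arithmetic input.

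That input is $c\geq a+b+2r$ for every Diophantine triple. From $d_-\geq0$ and your identity $d_+d_-=(c-a-b)^2-4(ab+1)$ you get $|c-a-b|\geq2r$. The branch $c\leq a+b-2r$ is impossible, because $r>a$ gives $a+b-2r<b<c$. You used this only when $d_->0$, and without excluding that branch. With it, Vieta's relation $d_++d_-=2(a+b+c+2abc)$ settles both bounds at once. Indeed, $d_+-4abc-4c=2(a+b-c)-d_-\leq-4r<0$, and $d_+-4abc-c=2(a+b)+c-d_->0$ because $d_-<c$.

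There is a second, smaller gap. The symmetry of $(a+b-c-d)^2=4(ab+1)(cd+1)$ in $c$ and $d$ only shows $c\in\{d_+(a,b,e),d_-(a,b,e)\}$ for $e=d_-$. It does not show that $c$ is the larger root. At one point you even call $d_+(a,b,d_-)$ the \emph{other} root, which would make $c$ the smaller one. You must exclude $c=d_-(a,b,e)$, for instance via $d_-(a,b,e)<\max\{b,e\}<c$. This is the fact $d_-(T)<\max T$ from Section~2, applied to the Diophantine triple $\{a,b,e\}$.
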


The regular extension $d_+=d_+(T)$ gives three new Diophantine triples
$\{a,b,d_+\}$, $\{a,c,d_+\}$ and $\{b,c,d_+\}$.
The inverse-root identities in
\cite[Propositions~1 and~2]{HeTogbeZiegler} give
\[
 d_-(a,b,d_+)=c,\qquad d_-(a,c,d_+)=b,\qquad d_-(b,c,d_+)=a.
\]
The descent reverses this construction. For a non-Euler triple, put
$e=d_-(T)>0$. The same identities give
\[
 c=d_+(a,b,e).
\]
Thus replacing $c$ by $e$ gives a Diophantine triple with smaller
product of its elements. Define
\begin{equation}\label{eq:partial-definition}
 \partial T=(T\cup\{e\})\setminus\{\max T\}.
\end{equation}
Starting with $\partial^0T=T$, define
\[
 \partial^jT=\partial(\partial^{j-1}T),\qquad j\geq1,
\]
as long as $\partial^{j-1}T$ is not Euler. We also put
\begin{equation}\label{eq:predecessor-elements}
 d_{-j}(T)=d_-(\partial^{j-1}T),\qquad j\geq1,
\end{equation}
whenever $\partial^{j-1}T$ is defined. In particular, $d_{-1}(T)=d_-(T)$.

\begin{proposition}[{\cite[Proposition~3]{HeTogbeZiegler}}]
\label{prop:finite-descent}
For every Diophantine triple $T=\{a,b,c\}$, the descent reaches an
Euler triple after a unique number $j$ of steps, where
\[
 0\leq j<\frac{\log(abc)}{\log12}.
\]
At each step, the product of the three elements decreases by a factor
greater than $12$.
\end{proposition}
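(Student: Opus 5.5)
We need to prove Proposition (finite descent): for every triple, descent reaches an Euler triple after j steps with $j<\log(abc)/\log 12$, with each step decreasing the product by a factor greater than $12$.

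The plan is to prove the per-step bound first and deduce everything else from it. Let $T=\{a,b,c\}$ with $a<b<c$ be non-Euler, and put $e=d_-(T)>0$. By the identities recalled above, $c=d_+(a,b,e)$, and $\partial T=\{a,b,e\}$ after reordering. The product ratio is therefore
\[
\frac{abc}{abe}=\frac{c}{e}.
\]
So it suffices to show $c>12e$.

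I would split according to the position of $e$ relative to $a$ and $b$. First, $e\neq a,b$, as noted. If $e>b$, then $\{a,b,e\}$ is a triple with largest element $e$ and $c=d_+(a,b,e)$. Lemma~\ref{lem:basic-descent} gives $c>4abe+e\geq 4\cdot1\cdot2\,e+e=9e$. This is not yet $12$, so I would refine using $a\ge1$ and $b\ge3$: if $a=1$ then $b\ge3$, since $\{1,2\}$ is not Diophantine. In every case $ab\ge3$, so $c>4ab\,e+e\geq13e>12e$. If $e<b$, the triple $\{a,b,e\}$ or $\{e,a,b\}$ (depending on whether $e<a$) has some largest element $x\in\{a,b\}$ and two smaller elements $y,z$. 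Since $d_+$ is symmetric, Lemma~\ref{lem:basic-descent} gives $c>4xyz+x$, so $c/e>4xy\geq 4\cdot 1\cdot 3=12$ strictly, where $xy$ is the product of the two elements other than $e$. A minimal check that the smallest possible pair product is at least $3$ (pairs $\{1,3\}$) handles all cases uniformly. In fact, in all cases $c/e>4\cdot(\text{product of the two remaining elements})\ge12$, using $d_+(u,v,w)>4uvw$ from Lemma~\ref{lem:basic-descent}.

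Given the per-step bound, termination follows. The products $abc$ of successive triples are positive integers, and each step divides the product by more than $12$. Hence descent cannot continue forever, and it stops exactly at the first Euler triple, since $\partial$ is defined precisely for non-Euler triples; this gives uniqueness of $j$. If $j$ steps occur, the final triple has product at least $1\cdot3\cdot8=24>1$, so $abc>12^j\cdot 24>12^j$. Taking logarithms gives $j<\log(abc)/\log 12$.

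The main subtlety is ensuring that Lemma~\ref{lem:basic-descent} applies to the descended triple with $e$ in arbitrary position. This is handled by the symmetry of $d_+$ together with the fact, already established, that $\{a,b,e\}$ is a genuine Diophantine triple of distinct positive integers.
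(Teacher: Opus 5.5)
Your argument is correct and is essentially the paper's: both reduce the per-step ratio to $c/e$, use $c=d_+(a,b,e)>4abe$ (via the symmetry of $d_+$ and Lemma~\ref{lem:basic-descent}) together with $ab\geq3$, and then use integrality of the products to get termination and $12^j<abc$. Your case split on the position of $e$ is unnecessary, since $d_+(a,b,e)>4abe$ holds for any ordering. There is one small slip: at $j=0$ the triple $\{1,3,8\}$ has $abc=24$, so you should write $abc\geq12^j\cdot24$ (strict only for $j\geq1$), which still gives $abc>12^j$.
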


\begin{proof}
For a non-Euler triple $T=\{a,b,c\}$ with $a<b<c$, put $e=d_-(T)$.
The inverse-root identity and Lemma~\ref{lem:basic-descent} give
$c=d_+(a,b,e)>4abe$. Since $ab\geq3$, we have
\[
 abe<\frac c4\leq\frac{abc}{12}.
\]
This estimate applies at each step after ordering the current triple.
The product is a positive integer, so the descent must terminate.
If it takes $j>0$ steps, the final product is less than $abc/12^j$,
and hence $12^j<abc$. The stated bound also holds for $j=0$.
The terminal step is unique because the descent is defined only for
non-Euler triples.
\end{proof}

\begin{definition}\label{def:degree}
The integer $j$ in Proposition~\ref{prop:finite-descent} is called the
\emph{degree} of $T$, denoted by $\deg T$. The Euler triple
$\partial^jT$ is called the \emph{generator} of $T$, and we say that
$T$ is generated by $\partial^jT$.
\end{definition}

Thus Euler triples have degree zero. If $\deg T=j$, then
\[
 d_{-k}(T)>0\quad(1\leq k\leq j),\qquad d_{-(j+1)}(T)=0.
\]
Each of the three successors $\{a,b,d_+\}$, $\{a,c,d_+\}$ and
$\{b,c,d_+\}$ has predecessor $T$ and degree $j+1$.
Conversely, the inverse-root identities show that every triple with
predecessor $T$ is one of these three successors. Hence each Euler
triple is the root of a ternary tree, and it generates at most $3^j$
triples of degree $j$ \cite[Remark~4.2]{HeTogbeZiegler}.
For example, the Fermat triple $\{1,3,8\}$ generates the degree-one triples
\[
 \{1,3,120\},\qquad \{1,8,120\},\qquad \{3,8,120\}.
\]

For later use, we record the explicit form of a degree-one triple
\cite[proof of Theorem~8]{HeTogbeZiegler}.

\begin{proposition}\label{prop:degree-one-form}
If $\deg T=1$, then $\{a,b,e\}$ is Euler, $e=a+b\pm2r$, and
\begin{equation}\label{eq:degree-one-c}
 c=d_+(a,b,e)=4r(r\pm a)(b\pm r),
\end{equation}
where the signs are correlated.
\end{proposition}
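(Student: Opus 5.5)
We prove Proposition~\ref{prop:degree-one-form} directly from the definitions and the inverse-root identity $c=d_+(a,b,e)$. If $\deg T=1$, then $\partial T$ is Euler by Definition~\ref{def:degree}. Here $\partial T=\{a,b,e\}$ with $e=d_-(T)>0$; it is obtained by removing $\max T=c$ and inserting $e$. The three elements $a,b,e$ need not be ordered, since $e$ may lie below $a$, between $a$ and $b$, or above $b$. So the first step is to write down what "Euler" means for the unordered triple $\{a,b,e\}$.

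Order the triple as $x<y<z$. The Euler condition says $z=x+y+2\sqrt{xy+1}$, which is equivalent to $d_-(x,y,z)=0$. The cleaner route is the symmetric form of \eqref{eq:regularidentity} with the fourth element equal to $0$. Setting $d=0$ there gives $(x+y-z)^2=4(xy+1)$ for any labelling, and this relation is symmetric under permuting $x,y,z$ up to the choice of which variable is isolated.

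For the pair $(a,b)$ we therefore solve $(a+b-e)^2=4(ab+1)=4r^2$ as a quadratic in $e$. This gives $e=a+b\pm2r$. We must check that the equation $(x+y-z)^2=4(xy+1)$ for the ordered triple is equivalent to $(a+b-e)^2=4(ab+1)$. This follows from expanding: $(x+y-z)^2-4xy-4=x^2+y^2+z^2-2xy-2xz-2yz-4$, which is fully symmetric in $x,y,z$. Hence $e$ is one of the two roots, and positivity of $e$ is automatic.

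Next we compute $c=d_+(a,b,e)$. The cleanest way is to use \eqref{eq:dpm-squares}, or a direct expansion. Write $e=a+b+2\varepsilon r$ with $\varepsilon=\pm1$. Then
\[
 ae+1=a^2+2\varepsilon ar+r^2=(r+\varepsilon a)^2,\qquad be+1=(r+\varepsilon b)^2.
\]
Substituting these into \eqref{eq:dplus-introduction} gives
\[
 c=a+b+e+2abe+2r\,|r+\varepsilon a|\,|r+\varepsilon b|.
\]

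The main obstacle is the sign and absolute-value bookkeeping. We have $r>a$ since $r^2=ab+1>a^2$, so $r+\varepsilon a>0$. On the other hand $r<b$, so $|r+\varepsilon b|=\varepsilon(b+\varepsilon r)$. With these substitutions the expression becomes a polynomial identity in $a,b,r$ modulo $r^2=ab+1$. Factoring it, we would verify that it equals $4r(r+\varepsilon a)(b+\varepsilon r)$. Equivalently, we would check that $4r(r\pm a)(b\pm r)$ expands, after replacing $r^2$ by $ab+1$, to $2(a+b\pm r)+2ab(a+b\pm2r)+2r(r\pm a)(b\pm r)$, which is a routine expansion. This gives \eqref{eq:degree-one-c} with correlated signs.

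As a check, take $a=1$, $b=3$, $r=2$. With the plus sign, $e=8$ and $4\cdot2\cdot3\cdot5=120$. With the minus sign, $e=0$, which is excluded.
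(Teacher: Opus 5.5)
Your argument is correct apart from one sign slip. The paper gives no proof of its own here; it only refers to the proof of Theorem~8 in \cite{HeTogbeZiegler}, so your computation is a self-contained replacement for that citation. Using the symmetric relation $x^2+y^2+z^2-2xy-2xz-2yz=4$ for the Euler condition is a good choice. It spares you the case split $e>b$ versus $e<b$, which correspond to the signs $+$ and $-$ respectively.

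\emph{The sign slip.} You wrote $|r+\varepsilon b|=\varepsilon(b+\varepsilon r)$. But $\varepsilon(b+\varepsilon r)=r+\varepsilon b$, which is negative when $\varepsilon=-1$. The correct statement is $|r+\varepsilon b|=\varepsilon(r+\varepsilon b)=b+\varepsilon r$. Taken literally, your formula gives $d_-(a,b,e)=0$ rather than $d_+(a,b,e)$ when $\varepsilon=-1$. Your subsequent identity does use the factor $b\pm r$, so the conclusion is unaffected once you correct that line.

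\emph{The expansion is unnecessary.} The identity you propose to check by expanding is
\[
 a+b+e+2abe=2r(r+\varepsilon a)(b+\varepsilon r).
\]
This is exactly the statement $d_-(a,b,e)=0$, which is the Euler property you already have, since $d_\pm$ is symmetric in its three arguments. Hence, in one line,
\[
 c=d_+(a,b,e)-d_-(a,b,e)=4\sqrt{(ab+1)(ae+1)(be+1)}=4r(r+\varepsilon a)(b+\varepsilon r).
\]

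\emph{A minor point.} Your phrase ``positivity of $e$ is automatic'' is misleading. The positivity $e>0$ comes from $T$ being non-Euler (since $\deg T\geq1$), not from the root formula. Your own example $a=1$, $b=3$ shows this: there the minus root is $e=0$.
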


For higher degree, the first two elements in
\eqref{eq:predecessor-elements} are
\[
 e=d_{-1}(T)=d_-(T)>0,\qquad
 f=d_{-2}(T)=d_-(\partial T)>0.
\]
The inverse-root identity applies again at this second descent step.
According as $e<b$ or $e>b$, it gives $b=d_+(a,e,f)$ or $e=d_+(a,b,f)$.

We shall use the following two restrictions on an irregular quadruple.

\begin{lemma}\label{lem:irregular-above-regular}\label{lem:higher-b-absolute}
Every irregular Diophantine quadruple $\{a,b,c,d\}$ with $a<b<c<d$
satisfies
\[
 d>d_+(a,b,c),\qquad b>4000.
\]
\end{lemma}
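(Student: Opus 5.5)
The two assertions are of different nature, and I would treat them separately. The inequality $d>d_+(a,b,c)$ should follow from the standard Pell-equation machinery for extending a triple. The bound $b>4000$ I would take from existing verifications of the regularity conjecture for small pairs, supplemented by a Baker--Davenport reduction if those verifications do not cover every pair.

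\emph{The inequality $d>d_+(a,b,c)$.} Let $\{a,b,c,d\}$ be an irregular quadruple with $a<b<c<d$, and write $ad+1=x^2$, $bd+1=y^2$, $cd+1=z^2$. Eliminating $d$ gives the simultaneous Pell equations
\[
 az^2-cx^2=a-c,\qquad bz^2-cy^2=b-c .
\]
So $z=v_m=w_n$, where $(v_m)$ and $(w_n)$ are the usual binary recurrences attached to the fundamental solutions $z_0,z_1$ and to the units $\beta$ and $\gamma$.
\begin{enumerate}[label=(\roman*)]
\item Apply the classical small-index analysis (Dujella's congruence method: $v_{2m}\equiv z_0+c(az_0m^2+sx_0m)\pmod{8c^2}$ and its analogue for $w_n$). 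This shows that the coincidences with $m,n\le 2$ yield only $d=d_-(a,b,c)$ or $d=d_+(a,b,c)$. Since $d_-<c<d$ and $d$ is irregular, neither occurs.
\item Hence $m\ge3$ (equivalently $n\ge3$). Then $z=v_m\ge v_3$, and already $v_3\gg \beta^{2}\sqrt c\ge (ac)\sqrt c$ in size.
\item Combining this with $d=(z^2-1)/c$ gives $d$ of order at least $a^2c^2$, and more precisely $d>4abc+4c$. By Lemma~\ref{lem:basic-descent}, $4abc+4c>d_+(a,b,c)$, which proves the first inequality.
\end{enumerate}
If $d<d_+$, this argument contradicts the fact that $d_+$ is the only solution with $z$ of index at most $2$. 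The point to check is that no index-$2$ solution other than $d_\pm$ exists once the fundamental solutions are restricted by $z_0^2<c$-type bounds; this restriction I would also quote from the earlier literature.

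\emph{The bound $b>4000$.} Here I would rely on computational verifications of the regularity conjecture for all pairs $\{a,b\}$ with $b$ below an explicit bound. Such verifications are in the literature cited in the introduction (Fujita's work on fixed pairs and the Cipu--Fujita--Mignotte/Cipu--Filipin--Fujita results), and they reduce each fixed pair to finitely many triples via $c=d_{\pm}$-type recursions in $\alpha$. If no single reference covers the full range $b\le4000$, the plan is as follows.
\begin{enumerate}[label=(\roman*)]
\item For each pair $\{a,b\}$ with $b\le 4000$, parametrize all $c$ by the Pell equation $bs^2-at^2=b-a$.
\item Use a Baker-type bound on the index.
\item Apply Baker--Davenport/Dujella--Peth\H{o} reduction to show that every extension is regular.
\end{enumerate}

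I expect this computational range to be the main obstacle. Its cost depends on there being an absolute upper bound for $c$ given $b$, so I would first try to cite it wholesale rather than redo it.
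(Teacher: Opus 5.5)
Your split into the two assertions matches the paper, whose proof is just two citations. As written, however, neither half of your proposal is established.

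\emph{The inequality $d>d_+(a,b,c)$.} The paper quotes Dujella's gap principle \cite[Proposition~1]{DujellaFiniteness}, $d>2.695\,a^{5/2}c^{7/2}$. For $c\ge3$ this exceeds $4c^3>4c(ab+1)>d_+$ by Lemma~\ref{lem:basic-descent}. Replacing it by the small-index classification ($m'\le2$ forces $d\in\{0,d_-,d_+\}$, \cite[Lemma~2.5]{CipuFujitaMiyazaki}) plus growth of the recurrence is a legitimate alternative. But your steps (ii)--(iii) do not deliver $d>4abc+4c$, for three reasons.
\begin{enumerate}[label=(\alph*)]
\item The bound $z\ge ac\sqrt c$ gives only $d\ge a^2c^2-1/c$. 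This is below $4abc+4c$ whenever $a^2c<4ab+4$, for instance for Euler triples with $a\le3$ and $b$ large. For $\{1,3,8\}$ one has $a^2c^2=64<120=d_+$.
\item The stronger claim $v_3\gg\beta^2\sqrt c$ is not uniform across initial classes. In the type-II class with initial term $(z,x)=(-t,r)$ of Lemma~\ref{lem:solution-types}, the leading coefficient $\tfrac12(r\sqrt{c/a}-t)$ is only about $\sqrt{c/b}/(4a)$. Hence $v_3\approx2\sqrt a\,c^2/\sqrt b$, which beats $\beta^2\sqrt c$ roughly only when $c>4ab$. For $\{1,3,8\}$ this class gives $v_3=65<\beta^2\sqrt8\approx96$.
\item In that Euler case the index-$3$ term gives $d=4c(r+2a)^2+4(r+2a)$, against $d_+=4cr^2-4r$. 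The margin is therefore only a factor of about $(1+2a/r)^2$, so crude size estimates cannot work.
\end{enumerate}
The conclusion $d>d_+$ does hold in every admissible class. You must, however, check it class by class with exact expressions (for example $v_3=c(4ac+3)\pm s(4ac+1)$ when $z_0=\pm1$, $x_0=1$), or simply cite the gap principle as the paper does.

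\emph{The bound $b>4000$.} The paper cites \cite[Lemma~3.4]{CipuFilipinFujitaBoundsII}. That lemma gives, for irregular quadruples, $b>21000$ if $b<2a$, $b>130000$ if $2a\le b\le12a$, and $b>4000$ if $b>12a$. None of the works you point to verifies regularity for every pair with $b\le4000$:
\begin{itemize}
\item Fujita's bound on the number of irregular extensions of a fixed pair or triple;
\item the parametric-family results of Cipu--Fujita--Mignotte and Cipu--Filipin--Fujita cited in the introduction.
\end{itemize}
So this half currently has no support. Your fallback computation is sound only in outline. Each pair has infinitely many $c$, so the search becomes finite only after an a priori upper bound on $c$ in terms of $b$ for irregular extensions, which you rightly flag. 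It then needs a Baker--Davenport reduction for every resulting triple, and none of this is carried out.
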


\begin{proof}
Since $c\geq3$, \cite[Proposition~1]{DujellaFiniteness} and
Lemma~\ref{lem:basic-descent} give
\[
 d>2.695c^{7/2}a^{5/2}>4c^3
 >4c(ab+1)>d_+(a,b,c).
\]
Here $ab+1<c^2$, since $a<b<c$ are positive integers.
By \cite[Lemma~3.4]{CipuFilipinFujitaBoundsII}, one has $b>21000$
if $b<2a$, $b>130000$ if $2a\leq b\leq12a$, and $b>4000$ if $b>12a$.
\end{proof}

\section{Pell equations}\label{sec:pell}

Suppose that $\{a,b,c,d\}$ is a Diophantine quadruple with $a<b<c<d$.
Retain $r,s,t$ from Section~\ref{sec:descent}, and write
\[
 ad+1=x^2,\qquad bd+1=y^2,\qquad cd+1=z^2
\]
with positive integers $x,y,z$. Eliminating $d$, we obtain the system
of Pell-type equations
\begin{align}
  ay^2-bx^2&=a-b,\label{eq:pell-ab}\\
  az^2-cx^2&=a-c,\label{eq:pell-ac}\\
  bz^2-cy^2&=b-c.\label{eq:pell-bc}
\end{align}

We use the following description of the solutions.

\begin{lemma}[{\cite[Lemma~1]{DujellaFiniteness}}]\label{lem:pell-general}
Let \(A<B\) be positive integers such that \(AB+1=R^2\). Every positive
integer solution $(X,Y)$ of $AY^2-BX^2=A-B$ is obtained from
\begin{equation}\label{eq:general-pell-solution}
  Y\sqrt A+X\sqrt B
  =\bigl(Y_0\sqrt A+X_0\sqrt B\bigr)
   \bigl(R+\sqrt{AB}\bigr)^k,
\end{equation}
where \(k\geq0\) and the initial solution \((X_0,Y_0)\) may be chosen so
that
\[
  1\leq X_0\leq
  \sqrt{\frac{A(B-A)}{2(R-1)}},
  \qquad
  1\leq |Y_0|\leq
  \sqrt{\frac{(R-1)(B-A)}{2A}}.
\]
\end{lemma}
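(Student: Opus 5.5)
The plan is the standard descent argument on the fundamental unit $\epsilon=R+\sqrt{AB}$, which has norm $R^2-AB=1$. Take a positive solution $(X,Y)$ of $AY^2-BX^2=A-B$ and put $\xi=Y\sqrt A+X\sqrt B$. Among all elements $\xi\epsilon^{-k}$ with $k\geq0$, choose the one of the form $Y_0\sqrt A+X_0\sqrt B$ with $X_0\geq1$ and $k$ maximal. Multiplying by $\epsilon^{-1}$ preserves the form $Y'\sqrt A+X'\sqrt B$ with integers
\[
 Y'=RY-BX,\qquad X'=RX-AY,
\]
and it preserves the equation $AY'^2-BX'^2=A-B$, because the norm is multiplicative. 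Moreover, $X=0$ is impossible, since $AY^2=A-B<0$. Hence each descent step either keeps $X'\geq1$ or produces $X'\leq-1$, and we stop at the last step with $X'\geq1$. The descent terminates because $Y\sqrt A+X\sqrt B$ decreases by the factor $\epsilon>1$ at each step, while it stays bounded below by a positive quantity as long as $X\geq1$, since then $AY^2=BX^2+A-B\geq A$ forces $|Y|\geq1$. So at the stopping point we have $X_0\geq1$ and $X_1:=RX_0-AY_0\leq0$. In fact $X_1\leq-1$, so $AY_0\geq RX_0+1>0$, and we also get $Y_0\geq1$.

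Next I convert $RX_0\leq AY_0$ into the stated bounds. Squaring gives $R^2X_0^2\leq A^2Y_0^2=A(BX_0^2+A-B)$. Using $R^2=AB+1$, this becomes $X_0^2\leq A(A-B)<0$, which is absurd. So the minimal representative must instead be chosen with $Y_0$ of either sign. I redo the reduction symmetrically: choose $k$ so that $\xi\epsilon^{-k}$ lies in a fundamental interval such as $\sqrt{B-A}\le \xi\epsilon^{-k}<\sqrt{B-A}\,\epsilon$. Writing $\eta=\xi\epsilon^{-k}$ and $\bar\eta=Y_0\sqrt A-X_0\sqrt B$, we have $\eta\bar\eta=A-B$, so $\bar\eta=-(B-A)/\eta$. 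Then
\[
 X_0=\frac{\eta-\bar\eta}{2\sqrt B},\qquad Y_0=\frac{\eta+\bar\eta}{2\sqrt A}.
\]
The task is to bound these over the interval. Since $\eta-\bar\eta=\eta+(B-A)/\eta$ is convex in $\eta$, it is maximized at an endpoint, and I obtain $X_0\le \sqrt{B-A}\,(\epsilon+\epsilon^{-1})/(2\sqrt B)$ and similarly for $|Y_0|$.

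The main obstacle is the sharp form of the constants $\sqrt{A(B-A)/(2(R-1))}$ and $\sqrt{(R-1)(B-A)/(2A)}$. The crude endpoint bound above loses a factor. To get the sharp constants I would follow Nagell's classical argument. Choose the representative with $X_0>0$ minimal in its class. Minimality gives $X_0^2\le$ the value where $\eta$ equals $\sqrt{(B-A)\epsilon}$, the midpoint of the interval in the multiplicative sense. At that point the identities $\epsilon+\epsilon^{-1}=2R$ and $(\sqrt\epsilon-\sqrt{\epsilon^{-1}})^2=2(R-1)$ yield exactly
\[
 X_0^2\le \frac{(B-A)(R+1)}{2B}=\frac{A(B-A)}{2(R-1)},
\]
using $B(R-1)=(R^2-1)/A\cdot(R-1)/(R+1)\cdot(R+1)/(R-1)$, that is, $(R-1)(R+1)=AB$. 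The bound $|Y_0|^2=(BX_0^2+A-B)/A\le(B-A)(R-1)/(2A)$ then follows by substituting into the equation. Finally, $X_0\ge1$ holds since $X_0\neq0$ and we choose its sign, and $|Y_0|\ge1$ since $Y_0=0$ would give $BX_0^2=B-A<B$.
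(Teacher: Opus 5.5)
Your final argument (third paragraph) is the classical Nagell reduction that underlies the cited Lemma~1 of Dujella; the paper itself gives no proof. The constants you obtain are correct.

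As written, however, the proposal does not prove the statement, because the assertion $k\geq0$ is never established. You choose the representative $\eta$ with minimal $X_0$ in the whole orbit $\{\xi\epsilon^j: j\in\mathbf Z\}$, so a priori you only know $\xi=\eta\epsilon^k$ with $k\in\mathbf Z$. The one-sided descent of your first paragraph, which would have forced $k\geq0$, was abandoned. The repair is short but must be made explicit, and it also justifies your sentence ``minimality gives\ldots''.

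Put $g(u)=u+(B-A)/u$, so that $X=g(\eta)/(2\sqrt B)$. For $0<u<v$ one has $g(u)\le g(v)$ if and only if $uv\ge B-A$.
\begin{itemize}
\item Minimality gives $X(\eta)\le X(\eta\epsilon^{\pm1})$, which is equivalent to $(B-A)\epsilon^{-1}\le\eta^2\le(B-A)\epsilon$.
\item On this interval $g$ is at most its common endpoint value $\sqrt{B-A}\,(\epsilon^{1/2}+\epsilon^{-1/2})$, which yields your bound.
\item On the other hand, $Y>0$ means $\xi+\bar\xi>0$, i.e.\ $\xi^2>B-A$. Hence $\epsilon^{2k}=\xi^2/\eta^2>\epsilon^{-1}$, and therefore $k\geq0$.
\end{itemize}

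Several other statements should also be corrected.
\begin{itemize}
\item The descent in your first paragraph is wrong, not just suboptimal. Since $\xi>0>\bar\xi=(A-B)/\xi$, every element of the orbit has $X=(\xi\epsilon^j-\bar\xi\epsilon^{-j})/(2\sqrt B)>0$. So $X'\le-1$ never occurs and the descent never terminates.
\item Your termination argument fails because $|Y|\geq1$ gives no positive lower bound for $Y\sqrt A+X\sqrt B$ when $Y<0$; indeed $\xi\epsilon^{-j}\to0$. The ``absurd'' inequality $X_0^2\le A(A-B)$ you derived is precisely the proof of the positivity above.
\item For the same reason you must not ``choose the sign'' of $X_0$ at the end. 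Changing it replaces $\eta$ by its conjugate, which in general lies in a different orbit. You do not need to, since $X_0=(\eta-\bar\eta)/(2\sqrt B)>0$ automatically.
\item The displayed identity for $B(R-1)$ is garbled. What you need is $(B-A)(R+1)/(2B)=A(B-A)(R+1)/\bigl(2(R^2-1)\bigr)=A(B-A)/\bigl(2(R-1)\bigr)$, which follows from $R^2-1=AB$.
\end{itemize}
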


Applying Lemma~\ref{lem:pell-general} to
\eqref{eq:pell-ab}--\eqref{eq:pell-bc}, we obtain initial solutions
$(x_0,y_0)$, $(x_1,z_1)$, and $(y_2,z_2)$. Define their coordinate
sequences, for $j\geq0$, by
\begin{align}
 y^{(a,b)}_j\sqrt a+x^{(a,b)}_j\sqrt b
 &=\bigl(y_0\sqrt a+x_0\sqrt b\bigr)\alpha^j,
 \label{eq:pell-sequence-ab}\\
 z^{(a,c)}_j\sqrt a+x^{(a,c)}_j\sqrt c
 &=\bigl(z_1\sqrt a+x_1\sqrt c\bigr)\beta^j,
 \label{eq:pell-sequence-ac}\\
 z^{(b,c)}_j\sqrt b+y^{(b,c)}_j\sqrt c
 &=\bigl(z_2\sqrt b+y_2\sqrt c\bigr)\gamma^j.
 \label{eq:pell-sequence-bc}
\end{align}
The quadruple gives nonnegative indices $l',m',n'$ such that
\begin{equation}\label{eq:common-coordinates}
  x=x^{(a,b)}_{l'}=x^{(a,c)}_{m'},\qquad
  y=y^{(a,b)}_{l'}=y^{(b,c)}_{n'},\qquad
  z=z^{(a,c)}_{m'}=z^{(b,c)}_{n'}.
\end{equation}

For each pair $(A,B)\in\{(a,b),(a,c),(b,c)\}$, put $R=\sqrt{AB+1}$.
The two coordinate sequences satisfy
\begin{align*}
 X_1&=RX_0+AY_0,&Y_1&=RY_0+BX_0,\\
 X_{j+2}&=2RX_{j+1}-X_j,&Y_{j+2}&=2RY_{j+1}-Y_j.
\end{align*}
The initial coordinates are $(x_0,y_0)$, $(x_1,z_1)$, and $(y_2,z_2)$,
respectively. We reserve $l',m',n'$ for indices of a common solution.

The two representations of $z$ initially give four parity cases
\cite{DujellaFiniteness,FujitaMiyazaki}. The shifts in
\cite{CipuFujitaMiyazaki} reduce them to two.

\begin{lemma}[{\cite[Lemma~8]{DujellaFiniteness},
\cite[Theorem~1.3]{FujitaMiyazaki},
\cite[Lemma~2.3]{CipuFujitaMiyazaki}}]
\label{lem:solution-types}
Suppose that \(\{a,b,c,d\}\) is a Diophantine quadruple with
$a<b<c<d$, and that $z=z^{(a,c)}_{m'}=z^{(b,c)}_{n'}$
for nonnegative integers \(m'\) and \(n'\). The initial solutions may be
chosen so that, for some $\lambda\in\{\pm1\}$, one of the following holds.
\begin{enumerate}[label=\textup{(\roman*)},leftmargin=2.5em]
\item $m'=2m$, $n'=2n$, and $z_1=z_2=\lambda$.
\item $m'=2m+1$, $n'=2n+1$, and $(z_1,z_2)=(\lambda t,\lambda s)$.

\end{enumerate}
\end{lemma}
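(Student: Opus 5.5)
This lemma is a compilation of known results, so the plan is to reconstruct the argument from the cited sources rather than prove anything new.

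\textbf{Step 1: congruences modulo $c$.} From \eqref{eq:pell-sequence-ac} and \eqref{eq:pell-sequence-bc}, the sequences $z^{(a,c)}_j$ and $z^{(b,c)}_j$ satisfy the recurrences $Z_{j+2}=2sZ_{j+1}-Z_j$ and $Z_{j+2}=2tZ_{j+1}-Z_j$. Expanding $\beta^j$ and $\gamma^j$ binomially and using $s^2\equiv t^2\equiv1\pmod c$ gives
\[
z^{(a,c)}_{2m}\equiv z_1+2c\bigl(az_1m^2+sx_1m\bigr)\pmod{8c^2}.
\]
Analogous formulas hold for the odd indices and for the $(b,c)$ sequence. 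In particular $z^{(a,c)}_{2m}\equiv z_1$ and $z^{(a,c)}_{2m+1}\equiv sz_1+cx_1$ modulo $2c$, and similarly $z^{(b,c)}_{2n}\equiv z_2$ and $z^{(b,c)}_{2n+1}\equiv tz_2+cy_2$ modulo $2c$.

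\textbf{Step 2: pin down the initial solutions.} Equate the two representations of $z$ modulo $c$ in each of the four parity combinations of $(m',n')$. The bounds on the initial solutions from Lemma~\ref{lem:pell-general} are roughly $|z_1|<\sqrt{s}$, together with the bound on $x_1$, and the analogous bounds for $(y_2,z_2)$. Together with the equation $az_1^2-cx_1^2=a-c$, these force small $|z_1|,|z_2|$ whenever the congruence holds without wraparound. This is Dujella's Lemma~8: in the even/even case $z_1=z_2=\pm1$; in the odd/odd case $(z_1,z_2)=\pm(t,s)$; and the mixed parity cases give $z_1=\pm1$, $z_2=\pm s$ or $z_1=\pm t$, $z_2=\pm1$. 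The relative sign and the equalities come from comparing $\pm$ residues and from the fact that $z_1^2\equiv1$ and $z_2^2\equiv1$ modulo $c$ force the values listed.

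\textbf{Step 3: eliminate the mixed parity cases by reindexing.} Here I would follow Cipu--Fujita--Miyazaki. For instance, an initial solution $(x_1,z_1)$ with $z_1=\pm t$ in the $(a,c)$ equation is itself a shift: $(z_1\sqrt a+x_1\sqrt c)=(\pm\sqrt a+\sqrt c)\beta^{\pm1}$ up to sign conventions, because $(\sqrt a+\sqrt c)\beta=(s+a+\dots)$ has $z$-coordinate $s\pm c\cdot$(stuff). So one checks that the pair $(z,x)=(t',\ast)$ arises from $(\pm1,1)$ after multiplying by $\beta^{\pm1}$, which is where $cr\pm st$-type identities from \eqref{eq:dpm-squares} enter. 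Replacing the initial solution by this shifted one changes the parity of $m'$. Choosing, in each mixed case, to shift whichever sequence makes the parities agree, we land in (i) or (ii) with a common sign $\lambda$. Fujita--Miyazaki's Theorem~1.3 rules out the remaining mixed configurations that cannot be shifted (those with $z_1=\pm t$, $z_2=\pm1$ when $m'$ even), using the gap principle $d>d_+$ and growth estimates.

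\textbf{Main obstacle.} The hardest step is Step 3: the precise identification of which nonstandard initial solutions are shifts of standard ones, and the verification that the sign $\lambda$ is common to $z_1$ and $z_2$. I would first try to verify directly the identity $(\sqrt a\,t'+\dots)=(\pm\sqrt a+\sqrt c)\beta^{-1}$ with explicit coordinates. If that fails, I would cite \cite[Lemma~2.3]{CipuFujitaMiyazaki} for this reduction, since the statement is presented as a combination of those references.
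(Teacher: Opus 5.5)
Your outline has the paper's shape: take the four parity cases from \cite{DujellaFiniteness,FujitaMiyazaki}, then reindex using \cite[Lemma~2.3]{CipuFujitaMiyazaki}. However, Steps~2 and~3 are wrong in their concrete content, and the reduction fails as you describe it.

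The cited classification is as follows. In the even--even case $z_1=z_2$ with $|z_1|\in\{1,cr-st\}$. In the odd--even, even--odd and odd--odd cases, $(|z_1|,|z_2|)$ equals $(t,cr-st)$, $(cr-st,s)$ and $(t,s)$ respectively, with $z_1z_2<0$ in the mixed cases and $z_1z_2>0$ in the odd--odd case.

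Your mixed-case values $(t,1)$ and $(1,s)$ contradict your own congruences. In the even--odd case, $z_1\equiv tz_2\pmod c$ with $z_2=\pm s$ gives $z_1\equiv\pm st\equiv\mp(cr-st)\pmod c$. This is $\pm1$ only if $cr-st=1$, i.e.\ for Euler triples.

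You also drop the even--even possibility $|z_1|=|z_2|=cr-st$. The bound in Lemma~\ref{lem:pell-general} is $|z_1|\le\sqrt{(s-1)(c-a)/(2a)}$, of order $c^{3/4}a^{-1/4}$ rather than $\sqrt s$. So the congruence modulo $2c$ yields $z_1=z_2$ but cannot force $|z_1|=1$.

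Step~3 rests on a false identity. The $z$-coordinates of $(\pm\sqrt a+\sqrt c)\beta^{\pm1}$ are $\pm c\pm s$, and $c-s=t$ holds only when $c=a+b+2r$. That coincidence is exactly what the paper uses for Euler triples in Lemma~\ref{lem:euler-type-I}. For non-Euler triples the $\pm1$ and $\pm t$ solutions are not related by a shift, which is why both types survive.

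The identities that actually do the work link the $cr-st$ class to the $t$ and $s$ classes:
\[
 (-t\sqrt a+r\sqrt c)\beta=(cr-st)\sqrt a+(rs-at)\sqrt c,\qquad
 (-s\sqrt b+r\sqrt c)\gamma=(cr-st)\sqrt b+(rt-bs)\sqrt c.
\]
Hence a term with initial value $cr-st$ at index $k$ equals one with initial value $-t$ (resp.\ $-s$) at index $k+1$. Likewise, a term with initial value $st-cr$ at index $k+1$ equals one with initial value $t$ (resp.\ $s$) at index $k$.

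Shifting whichever sequences carry $\pm(cr-st)$ turns the nontrivial even--even case and both mixed cases into an odd--odd coincidence with $(z_1,z_2)=(\lambda t,\lambda s)$. The sign relations above make $\lambda$ common, and no configuration has to be ``ruled out'' by growth estimates. What remains is to check that a downward shift never needs index $-1$. An index-zero term $\pm(cr-st)$ would give $cd+1=(cr-st)^2$, hence $d=d_-<c$ by \eqref{eq:dpm-squares}, a contradiction.
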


\begin{proof}
The cited results show that, before normalization, the
common term belongs to the even--even, odd--even,
even--odd, or odd--odd case.  In the even--even case the initial terms are either
\((z_1,z_2)=(\lambda,\lambda)\), or
\((|z_1|,|z_2|)=(|cr-st|,|cr-st|)\). In the other three cases the absolute values of
the initial terms are $(t,|cr-st|)$, $(|cr-st|,s)$, and $(t,s)$,
respectively, with the sign relations specified in the cited theorem.

To see how these four cases reduce to two, write
\(z^{(a,c)}_k(\zeta)\) for the \((a,c)\)-sequence whose initial term is
\(\zeta\), and use the analogous notation for the \((b,c)\)-sequence.
By \cite[Lemma~2.3]{CipuFujitaMiyazaki},
\[
\begin{aligned}
 z^{(a,c)}_k(cr-st)&=z^{(a,c)}_{k+1}(-t),&
 z^{(a,c)}_{k+1}(st-cr)&=z^{(a,c)}_k(t),\\
 z^{(b,c)}_k(cr-st)&=z^{(b,c)}_{k+1}(-s),&
 z^{(b,c)}_{k+1}(st-cr)&=z^{(b,c)}_k(s).
\end{aligned}
\]
Thus the nontrivial even--even case and both mixed-parity cases can be
reindexed as an odd--odd coincidence with initial terms
\((\lambda t,\lambda s)\).  The remaining even--even case has
initial terms \((\lambda,\lambda)\).  This gives precisely
\textup{(i)} and \textup{(ii)}.
The reverse shifts preserve nonnegative indices, since an original index zero
with initial term $\pm(cr-st)$ would give $d=d_-<c$, contrary to $d>c$.
\end{proof}

We call the cases in Lemma~\ref{lem:solution-types}\textup{(i)} and
\textup{(ii)} \emph{type I} and \emph{type II}, respectively.
Their common-term equations are
\begin{align}
 z^{(a,c)}_{2m}&=z^{(b,c)}_{2n} &&\text{(type I)},\label{eq:type-I-common-term}\\
 z^{(a,c)}_{2m+1}&=z^{(b,c)}_{2n+1} &&\text{(type II)}.\label{eq:type-II-common-term}
\end{align}
In both types we write $\Delta=m-n$.

\section{Congruences and index bounds}\label{sec:congruences}

The coordinate recurrences give the following congruences.

\begin{lemma}[{\cite[Lemma~9]{DujellaFiniteness}}]\label{lem:quadratic-expansions}
For every nonnegative integer $m$,
\begin{align}
 z^{(a,c)}_{2m}
 &\equiv z_1+2cm(sx_1+amz_1)\pmod{8c^2},
 \label{eq:z-even-general-c2}\\
 z^{(a,c)}_{2m+1}
 &\equiv sz_1+c\bigl((2m+1)x_1+2am(m+1)sz_1\bigr)
 \pmod{4c^2}.
 \label{eq:z-odd-general-c2}
\end{align}
The corresponding formulas for the $(b,c)$-sequence follow by replacing
$(a,s,m,x_1,z_1)$ with $(b,t,n,y_2,z_2)$.
\end{lemma}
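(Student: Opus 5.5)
We prove both congruences by writing the index-$k$ term in closed form through the binomial expansion of $\beta^k$, and then reducing modulo powers of $c$. From \eqref{eq:pell-sequence-ac},
\[
 z^{(a,c)}_k\sqrt a+x^{(a,c)}_k\sqrt c=(z_1\sqrt a+x_1\sqrt c)(s+\sqrt{ac})^k .
\]
Expanding and collecting the coefficient of $\sqrt a$ gives
\[
 z^{(a,c)}_k=z_1\sum_{i\ \mathrm{even}}\binom ki s^{k-i}(ac)^{i/2}
 +c\,x_1\sum_{i\ \mathrm{odd}}\binom ki s^{k-i}a^{(i-1)/2}c^{(i-1)/2}.
\]

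\emph{Even index.} It is cleaner to use $\beta^{2m}=(\beta^2)^m$ with $\beta^2=(2s^2-1)+2s\sqrt{ac}=(1+2ac)+2s\sqrt{ac}$. Expanding in powers of $2ac$ and $2s\sqrt{ac}$ and keeping only the terms of total $c$-degree at most $1$, we get
\[
 \beta^{2m}\equiv 1+2acm+2ms\sqrt{ac}+2m(m-1)acs^2\cdot\ldots
\]
We work with the exact terms:
\[
 (1+2ac+2s\sqrt{ac})^m=\sum_{j}\binom mj(2s\sqrt{ac})^j(1+2ac)^{m-j}.
\]
The terms with $j=0,1,2$ give the coefficients of $1$ and $\sqrt{ac}$ modulo $8c^2$. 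Here $s^2=1+ac$, and the factors $\binom m2\cdot 4\cdot 2$ and $\binom m3\cdot 8$ supply enough powers of $2$. Multiplying by $z_1\sqrt a+x_1\sqrt c$ then yields
\[
 z^{(a,c)}_{2m}\equiv z_1+2cm(sx_1+amz_1)\pmod{8c^2}.
\]
Concretely, the rational part of $\beta^{2m}$ is $\equiv 1+2acm^2 \pmod{8c^2}$, using $2acm+4acs^2\binom m2\equiv 2acm+2acm(m-1)$ modulo $8c^2$ once $s^2\equiv1\pmod c$ is inserted. The irrational part is $\equiv 2ms\sqrt{ac}$, and all higher $j$ carry $c^2$ together with a factor $8$. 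This is a standard calculation as in \cite[Lemma~9]{DujellaFiniteness}.

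\emph{Odd index.} Write $\beta^{2m+1}=\beta\cdot\beta^{2m}$ and use the even-index expansion, now needed only modulo $4c^2$. The coefficient of $\sqrt a$ in $(z_1\sqrt a+x_1\sqrt c)(s+\sqrt{ac})(P+Q\sqrt{ac})$, with $P\equiv1+2acm^2$ and $Q\equiv 2ms$, is
\[
 z_1(sP+acQ)+c\,x_1(P+sQ).
\]
Reducing this with $s^2=1+ac$ gives $sz_1+c\bigl((2m+1)x_1+2am(m+1)sz_1\bigr)$ modulo $4c^2$. The $(b,c)$ formulas follow verbatim by symmetry of the construction, since $\gamma=t+\sqrt{bc}$.

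The only delicate point is the $2$-adic bookkeeping. To obtain modulus $8c^2$ rather than $c^2$, we must check that every discarded term has a factor $8c^2$. This holds because $\binom m2\cdot 4s^2\cdot ac$ and its companion terms combine with $s^2-1=ac$, and because the terms with $j\ge3$ carry $8(ac)^{j/2}$ with $j\ge 3$, hence at least $8c^2$ times an integer after pairing with $\sqrt{ac}$. I expect this step to be the main obstacle; everything else is mechanical.
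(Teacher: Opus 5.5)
Your route is valid in outline but differs from the paper's. The paper never expands $\beta^{2m}$. It uses that each parity subsequence satisfies $Z_{k+2}=(4ac+2)Z_{k+1}-Z_k$, checks that the two quadratic polynomials satisfy this recurrence modulo $8c^2$ and $4c^2$, checks $m=0,1$, and inducts. For $F(m)=z_1+2cm(sx_1+amz_1)$ the second difference is $4acz_1$, so the residual is $-4ac\bigl(F(m+1)-z_1\bigr)=-8ac^2(m+1)\bigl(sx_1+a(m+1)z_1\bigr)$, and the factor $8$ comes for free. That argument needs the target polynomial in advance but involves no parity bookkeeping. Your binomial expansion derives the polynomial, but then the $2$-adic accounting is the whole proof, and that is where your write-up slips.

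\emph{The slip.} Your congruence $2acm+4acs^2\binom m2\equiv 2acm+2acm(m-1)\pmod{8c^2}$ is false in general. By $s^2=1+ac$ the difference is $4a^2c^2\binom m2$, which is only divisible by $4c^2$; for $a=1$, $c=3$, $m=2$ it equals $36$, while $8c^2=72$. In the same sentence you also drop the $j=0$ contribution $\binom m2(2ac)^2=4a^2c^2\binom m2$ coming from $(1+2ac)^m$. These two defects cancel. Modulo $8c^2$ the rational part is
\[
 P\equiv 1+2acm+4ac\tbinom m2+8a^2c^2\tbinom m2\equiv 1+2acm^2 .
\]
This is presumably what your remark about \emph{companion terms} intends, but the computation has to be written this way.

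\emph{The $\sqrt{ac}$-coefficient.} $Q$ enters $z$ only as $cx_1Q$, so it is needed, and true, only modulo $8c$, not modulo $8c^2$. The reduction is $Q\equiv 2ms(1+2ac)^{m-1}\equiv 2ms+4acsm(m-1)\equiv 2ms\pmod{8c}$. The last step uses that $m(m-1)$ is even, and the terms with $j\geq3$ are multiples of $8c$.

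\emph{The odd index.} Your formula gives $cx_1(P+sQ)\equiv c(2m+1)x_1+2ac^2x_1m(m+1)\pmod{4c^2}$. Discarding the last term requires $2\mid m(m+1)$, which you do not mention.

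With these three parity points made explicit, your expansion argument is a complete proof.
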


\begin{proof}
Each parity subsequence satisfies
\[
 Z_{m+2}=(4ac+2)Z_{m+1}-Z_m.
\]
Direct substitution shows that the two quadratic expressions in the lemma
satisfy this recurrence modulo $8c^2$ and $4c^2$, respectively.
They agree with the corresponding coordinate sequences at $m=0,1$
modulo these same moduli. Induction proves both congruences.
\end{proof}

\begin{lemma}\label{lem:mn-comparison}
For the normalized choices of initial solutions in
Lemma~\ref{lem:solution-types}, a common term
$z=z^{(a,c)}_{m'}=z^{(b,c)}_{n'}$ corresponding to $d>c$ satisfies
$n'-1\leq m'\leq2n'+1$. Consequently,
\[
 \begin{cases}
   n\leq m\leq2n, & \text{for type I},\\
   n\leq m\leq2n+1, & \text{for type II}.
 \end{cases}
\]
\end{lemma}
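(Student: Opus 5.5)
We compare the growth of the two sequences that share the term $z$ through explicit two-sided estimates. Starting from the closed forms \eqref{eq:pell-sequence-ac} and \eqref{eq:pell-sequence-bc}, we write
\[
 z=\frac{1}{2\sqrt a}\Bigl((z_1\sqrt a+x_1\sqrt c)\beta^{m'}+(z_1\sqrt a-x_1\sqrt c)\beta^{-m'}\Bigr),
\]
and similarly with $(b,\gamma,n')$. For the normalized initial terms of Lemma~\ref{lem:solution-types} we have $|z_1|\in\{1,t\}$ and $|z_2|\in\{1,s\}$, and the matching $x_1,y_2$ are determined by the Pell equations \eqref{eq:pell-ac} and \eqref{eq:pell-bc}. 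In type I, $x_1=y_2=1$. In type II, $x_1=r$ and $y_2=r$, since $at^2-cr^2=a-c$ and $bs^2-cr^2=b-c$. This lets us bound $z$ above and below by constant multiples of $\beta^{m'}$ and of $\gamma^{n'}$. We also use $\beta\approx 2\sqrt{ac}$ and $\gamma\approx 2\sqrt{bc}$. More precisely, $2\sqrt{ac}<\beta<2\sqrt{ac}+1$, and the same holds for $\gamma$ with $b$ in place of $a$.

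\emph{Lower bound $m'\geq n'-1$.} We have $\beta<\gamma$, and the initial factors are of comparable size: $(|z_1|\sqrt a+x_1\sqrt c)/\sqrt a$ versus $(|z_2|\sqrt b+y_2\sqrt c)/\sqrt b$. We get $\gamma^{n'}\ll z\ll \beta^{m'}\cdot(\text{const})$, and the constant is at most about one factor of $\gamma$. Concretely, in type II we have $(t\sqrt a+r\sqrt c)/\sqrt a\approx 2\sqrt{bc}\cdot$(something bounded) against $(s\sqrt b+r\sqrt c)/\sqrt b$. This yields $\gamma^{n'-1}<\beta^{m'}\cdot(1+\varepsilon)$, and since $\beta<\gamma$ we obtain $m'\geq n'-1$. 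Working in logarithms, we only need the crude inequality that the ratio of initial factors is less than $\gamma$. Here $d>c$ forces $m',n'\geq1$, which excludes the degenerate small-index solutions.

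\emph{Upper bound $m'\leq 2n'+1$.} We use $\gamma=t+\sqrt{bc}$ with $b>a$ and $c>b$. Hence $\gamma^2\approx 4bc$, while $\beta^4\approx 16a^2c^2$. Since $b>a$, this inequality goes the wrong way in general. The correct comparison is instead $\gamma>\beta\cdot\sqrt{b/a}$, together with $\beta^{2}\approx 4ac>\gamma$ whenever $4a^2c>b$, which holds because $c>b$. From $\beta^{m'}\ll z\ll\gamma^{n'}\cdot C$ we then get $\beta^{m'}<\gamma^{n'}\cdot\gamma\le\beta^{2n'+2}$. Tightening the constants gives $m'\leq 2n'+1$.

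\emph{Translation to $m,n$.} In type I, $m'=2m$ and $n'=2n$, so $2n-1\le 2m\le 4n+1$, and parity gives $n\le m\le 2n$. In type II, $m'=2m+1$ and $n'=2n+1$, so $2n\le 2m+1\le 4n+3$, which gives $n\le m\le 2n+1$.

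The main obstacle is controlling the constants in the type II initial factors precisely enough to get the sharp offsets $-1$ and $+1$, rather than $\pm2$. Our first attempt is to compare directly the exact expressions $(t\sqrt a+r\sqrt c)\beta^{m'}/\sqrt a$ and $(s\sqrt b+r\sqrt c)\gamma^{n'}/\sqrt b$. We would use the identities $t\sqrt a+r\sqrt c\approx\sqrt{a}\,\gamma\cdot(\ldots)$, coming from $(r\sqrt c+t\sqrt a)(r\sqrt c-t\sqrt a)=c-a$. The same identity shows that these factors are essentially $\beta$-, respectively $\gamma$-sized shifts, which is why the shifted indices appear.
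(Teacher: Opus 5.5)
\textbf{There is a genuine gap.} Your plan (expand both coordinates in $\beta^{\pm m'}$ and $\gamma^{\pm n'}$ and compare sizes) can be made to work, but as written it is not a proof. Everything rests on one inequality: the ratio $\rho=\sqrt{b/a}\,(z_1\sqrt a+x_1\sqrt c)/(z_2\sqrt b+y_2\sqrt c)$, corrected by the conjugate terms, must be smaller than $\gamma$. You only assert this (``at most about one factor of $\gamma$'', ``$(1+\varepsilon)$''). You also look only at the sign $\lambda=+1$, where $\rho\approx\sqrt{b/a}$ is harmless.

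The delicate case is $\lambda=-1$. In type II the leading coefficients are then the small quantities $r\sqrt c-t\sqrt a=(c-a)/(r\sqrt c+t\sqrt a)$ and $r\sqrt c-s\sqrt b=(c-b)/(r\sqrt c+s\sqrt b)$. In type I they are $\sqrt c-\sqrt a$ and $\sqrt c-\sqrt b$. So $\rho$ acquires the extra factor $(c-a)/(c-b)$, respectively $(\sqrt c-\sqrt a)/(\sqrt c-\sqrt b)$. For Euler triples this factor is of order $\sqrt{b/a}$. For instance, with $a=1$, $c=b+1+2r$ and $\lambda=-1$ in type I, one gets $\rho\approx b\approx\gamma/2$. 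So ``comparable size'' is false in general. To get $\rho<\gamma$ you need the structural input $c\geq a+b+2r$, i.e.\ $\sqrt c>\sqrt a+\sqrt b$ and $c-b\geq a+2r$. This gives $\rho$ at most about $b/a$, which is below $\gamma/2$ because $\gamma>2\sqrt{bc}>2b$.

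In the same case the conjugate term $-(r\sqrt c+t\sqrt a)\beta^{-m'}/(2\sqrt a)$ is as large as the main term when $m'=1$. Indeed $z=cr-st$ there, which equals $1$ for Euler triples. So the lower estimate $z\gg\beta^{m'}$ requires $m'\geq3$, and likewise $n'\geq3$. This holds because index $1$ with $\lambda=-1$ gives $d=d_-<c$. Your remark that $d>c$ forces $m',n'\geq1$ is vacuous in type II, where the indices are odd anyway.

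Two smaller slips. First, $\gamma>\beta\sqrt{b/a}$ is false: in fact $\beta\sqrt{b/a}-\gamma=\sqrt{bc+b/a}-\sqrt{bc+1}>0$. Second, $\beta^4$ versus $\gamma^2$ does not ``go the wrong way''. Only $\gamma<\beta^2$ is used, and it holds.

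The upper bound is also not where the difficulty lies. In all four cases the reverse ratio of leading coefficients is below $1$. Once the conjugate term is controlled, this gives $\beta^{m'}<2\gamma^{n'}<\gamma^{n'+1}<\beta^{2n'+2}$. With these repairs your route handles both types uniformly.

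The paper proceeds differently. For type I it simply cites Dujella's Lemma~3. For type II it compares the exact subsequences $V_j=z^{(a,c)}_{2j+1}$ and $W_j=z^{(b,c)}_{2j+1}$. These start at the common value $h=cr+\lambda st>0$ and satisfy $V_1<W_1$. It then uses that positive, increasing differences propagate for recurrences with coefficients $P>Q>2$. This yields $V_j<W_j$ and $W_n<V_{2n+1}$, hence even $n\leq m\leq2n$. That argument never has to estimate the small coefficient $r\sqrt c-t\sqrt a$ that makes your analytic version delicate.
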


\begin{proof}
For type I, the initial solutions are $(x_1,z_1)=(1,\lambda)$ and
$(y_2,z_2)=(1,\lambda)$. The first assertion is therefore
\cite[Lemma~3]{DujellaFiniteness}, whose even-index specialization gives
$n\leq m\leq2n$.

For type II, put $V_j=z^{(a,c)}_{2j+1}$, $W_j=z^{(b,c)}_{2j+1}$,
and $h=cr+\lambda st>0$. The last inequality follows from
$(cr)^2-(st)^2=c(c-a-b)-1>0$, since $c>a+b$.
These increasing sequences have recurrence coefficients
$p=4ac+2<q=4bc+2$, and
\[
 V_0=W_0=h,\qquad V_1=(p-1)h+2cr<W_1=(q-1)h+2cr.
\]
For positive sequences $U_j,Z_j$ with coefficients $P>Q>2$, the difference
$D_j=U_j-Z_j$ satisfies
$D_{j+1}-D_j=(P-2)D_j+(D_j-D_{j-1})+(P-Q)Z_j$.
Thus positive, increasing differences propagate by induction.
Applied with $(U,Z)=(W,V)$, this gives $V_j<W_j$ for $j\geq1$, so $m>n$
if $n\geq1$, while $n=0$ forces $m=0$.

For the other direction, $C_j=V_{2j+1}$ has coefficient
$P=p^2-2>q$, initial term $C_0=V_1>W_0$, and
\[
 (C_1-W_1)-(C_0-W_0)=(P-1)V_1-(q-1)h>0.
\]
Applied with $(U,Z)=(C,W)$, the same comparison gives
$W_n<C_n=V_{2n+1}$, hence $m\leq2n$.
This proves both asserted bounds.
\end{proof}

\begin{lemma}\label{lem:constant-index-bounds}
Let $d>d_+(a,b,c)$. Then $m'>2$, and a type-I solution has
$m\geq2$, $n\geq1$. If a type-II solution also satisfies
$b>4000$ and $c>4ab+a+b$, then $m,n\geq4$.
Moreover, the index $l'$ in \eqref{eq:common-coordinates} is positive.
\end{lemma}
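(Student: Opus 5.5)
The approach is to compute the first few terms of the $(a,c)$- and $(b,c)$-sequences explicitly and identify which values of $d$ they produce. Small indices will then be excluded by showing they give only $d\le d_+(a,b,c)$ or $d<c$, or by using the congruences of Lemma~\ref{lem:quadratic-expansions}.

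\textbf{Step 1: small indices give at most $d_+$.} By construction, $z=z^{(a,c)}_{m'}$ corresponds to $d=(z^2-1)/c$. For type I with $z_1=\lambda$ we have $x_1=1$. Then $z^{(a,c)}_0=\pm1$ gives $d=0$, and
\[
z^{(a,c)}_1=s\lambda+c,\qquad z^{(a,c)}_2=\lambda(2ac+1)+2cs.
\]
For type II the index $m'=1$ corresponds to $z_1=\lambda t$, and $cd+1=(cr\pm st)^2$ gives $d=d_\pm$ by \eqref{eq:dpm-squares}. More generally, for $m'\le2$ I would verify directly that the resulting $d$ is at most $d_+$. The cleanest route is to use the standard fact, from \cite{DujellaFiniteness}, that the $d$ arising from $z^{(a,c)}_1$ or $z^{(a,c)}_2$ either fails to be common with the $(b,c)$-sequence or equals $d_\pm$, or $d_\pm$ of a related triple. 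Hence $d>d_+$ forces $m'>2$. Since $m'=2m$ in type I, this gives $m\ge2$, and Lemma~\ref{lem:mn-comparison} ($m\le 2n$) then gives $n\ge1$.

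\textbf{Step 2: type II gives $m,n\ge4$.} Suppose $n\le3$. By Lemma~\ref{lem:mn-comparison}, $m\le 2n+1\le7$, so only finitely many pairs $(m,n)$ remain. For each pair I would apply the congruence \eqref{eq:z-odd-general-c2} to both sequences, with $z_1=\lambda t$ and $z_2=\lambda s$. Equating them and using $x_1,y_2$ together with $x_1^2c+\ldots$ (equivalently $az_1^2-cx_1^2=a-c$) yields a relation modulo $c$, and then modulo $c^2$, of the shape
\[
\lambda t\,(2am(m+1)-\dots)\equiv\ldots .
\]
After reduction this becomes an equation like $am(m+1)-bn(n+1)\equiv\text{small}\pmod c$. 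Both sides have absolute value below $c$ when $c>4ab+a+b$ and $m,n\le7$, so we get an exact equality. Combined with $b>4000$, this exact equality should force a contradiction, or else force $d\le d_+$, which is already excluded. I expect this case analysis to be the main obstacle. Tracking the initial coordinates $x_1=\pm r$-type values and the sign $\lambda$ precisely needs care, and I would look up the corresponding computation in \cite{CipuFujitaMiyazaki} before attempting the general reduction.

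\textbf{Step 3: $l'>0$.} If $l'=0$, then $x=x_0$ is bounded by Lemma~\ref{lem:pell-general}: $x_0^2\le a(b-a)/(2(r-1))<b$, roughly. Hence $ad+1=x^2<b^2$, which gives $d<b^2/a$. On the other hand, $d>d_+>4abc>4ab^2$ by Lemma~\ref{lem:irregular-above-regular} and Lemma~\ref{lem:basic-descent}. These two bounds contradict each other, so $l'>0$.
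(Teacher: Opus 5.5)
Your Step~3 is correct and is essentially the paper's argument. There, $l'=0$ gives $x=x_0$, and Lemma~\ref{lem:pell-general} gives $x_0^2<b$, which is incompatible with $d>c$; your comparison with $d>4ab^2$ works equally well.

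Step~1 has the same shape as the paper's proof, which quotes \cite[Lemma~2.5]{CipuFujitaMiyazaki}: a common solution with $m'\le2$ has $d\in\{0,d_-,d_+\}$. However, the ``standard fact'' you state leaves open that $d$ is $d_\pm$ of a \emph{related} triple. This alternative is not vacuous: for $z_1=\lambda$, the term $z^{(a,c)}_2$ alone gives $d=d_+(a,c,a+c+2\lambda s)$. So ``hence $m'>2$'' does not yet follow, and the $(b,c)$-sequence must be used. This is easy to close directly.
\begin{itemize}
\item In type~II, $m'\le2$ means $m'=1$ and $z=cr+\lambda st$, so $d=d_\pm$.
\item In type~I, $m'=0$ gives $d=0$.
\item In type~I with $m'=2$, Lemma~\ref{lem:mn-comparison} forces $n=1$. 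Then $z^{(a,c)}_2=z^{(b,c)}_2$ reads $s-t=\lambda(b-a)$, whence $\lambda=-1$, $s+t=c$, $c=a+b+2r$, and $d=4rst=d_+(a,b,c)$.
\end{itemize}

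The genuine gap is Step~2. The paper does not prove $m,n\ge4$ by hand. It quotes \cite[Lemma~2.6]{CipuFujitaMiyazaki}, which gives $\min\{m',n'\}\ge9$ under these hypotheses. Your substitute fails as sketched, for the following reasons.
\begin{itemize}
\item The congruence coming from \eqref{eq:z-odd-general-c2} is $bn(n+1)-am(m+1)\equiv\lambda rst\Delta\pmod c$, and $rst$ is not small modulo $c$. Since $2rst\equiv a+b-e\pmod c$ with $e=d_-(a,b,c)$, what you actually get is $c\mid H=bN-aM+eL$, which is exactly \eqref{eq:higher-predecessor-cong}.
\item For $n\le3$ and $m\le7$, the available bound is only $|H|\le 28b+116a+4e$. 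The hypotheses $b>4000$ and $c>4ab+a+b$ permit $c<16b$; for example, take $a=1$, $b=23408$, $c=d_+(1,3,b)$. So you cannot conclude $H=0$.
\item Even where sizes are favourable, two congruent integers each of absolute value below $c$ need not be equal. You need their difference to be below $c$.
\item An exact equality $H=0$ is not contradictory by itself. The paper needs extra structure to exclude it. In degree one it uses the explicit form of $c$ (Proposition~\ref{prop:degree-one-m-lower}). In higher degree it uses $b>8a$ together with a second congruence modulo $e$ (Lemma~\ref{lem:higher-bridge} and Proposition~\ref{prop:higher-type-II-lower}).
\end{itemize}
So ``should force a contradiction'' is precisely the missing argument. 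Without the citation to \cite[Lemma~2.6]{CipuFujitaMiyazaki}, Step~2 is unproved.
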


\begin{proof}
By \cite[Lemma~2.5 and the proof of Theorem~1.4]{CipuFujitaMiyazaki},
a common solution with $m'\leq2$ gives
$d\in\{0,d_{-},d_{+}\}$. Hence $m'>2$. For type I this gives
$m\geq2$, while Lemma~\ref{lem:mn-comparison} gives $n\geq1$.
For type II, \cite[Lemma~2.6]{CipuFujitaMiyazaki} yields
$\min\{m',n'\}\geq9$, and therefore $m,n\geq4$.
Finally, $l'=0$ would imply $x=x_0$. By
Lemma~\ref{lem:pell-general},
\[
 d=\frac{x_0^2-1}{a}
 \leq\frac{b-a}{2(r-1)}-\frac1a<b<c,
\]
contrary to the hypothesis.
\end{proof}

\section{Euler triples}\label{sec:euler}

Let $\{a,b,c\}$ be an Euler triple. Then
\[
 c=a+b+2r,\qquad s=a+r,\qquad t=b+r,\qquad cr-st=1.
\]

\begin{lemma}\label{lem:euler-type-I}\label{lem:euler-ab-sequence}
After reindexing, every common solution has the representations
\begin{align}
 y\sqrt a+x\sqrt b
   &=(\sqrt a+\sqrt b)\alpha^{2l},
   \label{eq:euler-ab-normalized}\\
 z\sqrt a+x\sqrt c
   &=(\lambda\sqrt a+\sqrt c)\beta^{2m},\label{eq:euler-ac-seq}\\
 z\sqrt b+y\sqrt c
   &=(\lambda\sqrt b+\sqrt c)\gamma^{2n},\label{eq:euler-bc-seq}
\end{align}
where $l,m,n\geq0$ and $\lambda\in\{\pm1\}$.
\end{lemma}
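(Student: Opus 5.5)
The plan has two parts: first reduce type II to type I using the Euler identity $cr-st=1$, then pin down the $(a,b)$-sequence. Lemma~\ref{lem:solution-types} leaves two possibilities for the $(a,c)$ and $(b,c)$ representations: type I with $z_1=z_2=\lambda$, and type II with $(z_1,z_2)=(\lambda t,\lambda s)$. For an Euler triple, $cr-st=1$, so the shift identities from \cite[Lemma~2.3]{CipuFujitaMiyazaki} quoted in the proof of Lemma~\ref{lem:solution-types} specialize to
\[
 z^{(a,c)}_k(1)=z^{(a,c)}_{k+1}(-t),\qquad z^{(a,c)}_{k+1}(-1)=z^{(a,c)}_k(t),
\]
and to the analogous identities with $(b,c,s)$ in place of $(a,c,t)$. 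A type-II coincidence $z^{(a,c)}_{2m+1}(\lambda t)=z^{(b,c)}_{2n+1}(\lambda s)$ then becomes one of two type-I coincidences:
\begin{itemize}[leftmargin=2.5em]
\item for $\lambda=1$, $z^{(a,c)}_{2m+2}(-1)=z^{(b,c)}_{2n+2}(-1)$;
\item for $\lambda=-1$, $z^{(a,c)}_{2m}(1)=z^{(b,c)}_{2n}(1)$.
\end{itemize}
The shifts act on whole vectors $Z\sqrt a+X\sqrt c$, since they come from multiplying by $\beta^{\pm1}$. So the $x$- and $y$-coordinates move along with $z$, and with initial solutions $(x_1,z_1)=(1,\lambda)$ and $(y_2,z_2)=(1,\lambda)$ we obtain \eqref{eq:euler-ac-seq} and \eqref{eq:euler-bc-seq}. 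All indices stay nonnegative: the new indices $m+1,n+1$ or $m,n$ are nonnegative by construction.

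For the $(a,b)$-sequence, the plan is to identify the class of $(x,y)$ as a solution of \eqref{eq:pell-ab}. The key relation is
\[
 (\sqrt a+\sqrt b)\,\alpha=t\sqrt a+s\sqrt b ,
\]
together with $(\sqrt b-\sqrt a)\alpha=(r-a)\sqrt b+(b-r)\sqrt a$. Hence the classes of $(1,1)$ and $(1,-1)$ are the only candidates containing small solutions that are compatible with $x\equiv\pm1$ and $y\equiv\pm1$ in the appropriate sense. From \eqref{eq:euler-ac-seq}, expanding $\beta^{2m}=(2s^2-1+2s\sqrt{ac})^m$ shows $x\equiv1\pmod{2a}$. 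Similarly, \eqref{eq:euler-bc-seq} gives $y\equiv1\pmod{2b}$.

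In the $(a,b)$-sequence \eqref{eq:pell-sequence-ab}, each step multiplies by $\alpha$, i.e.\ applies $(X,Y)\mapsto(rX+aY,\,bX+rY)$. Modulo $b$ this is essentially $X\mapsto rX+aY$, $Y\mapsto rY$, with $r^2\equiv1$. Tracking residues modulo $2a$ and $2b$ along the orbits of the admissible initial solutions from Lemma~\ref{lem:pell-general} should force the class $(x_0,y_0)=(1,1)$ and an even index. Odd indices produce the $(t,s)$ pattern, which violates $y\equiv1\pmod{2b}$ because $t=b+r$ and $r\not\equiv\pm1\pmod{2b}$ for $b>4000$.

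Showing that the class $(1,-1)$ cannot occur with even index, or else absorbing it by reindexing, is the main obstacle. I would first try to use $(\sqrt b-\sqrt a)\alpha^{-1}$-type identities to show that the negative class coincides with a shift of the positive one, or that its terms have $y\equiv-1\pmod{2b}$. Failing that, I would fall back on comparing sizes with Lemma~\ref{lem:pell-general} and the positivity $l'>0$ from Lemma~\ref{lem:constant-index-bounds}.
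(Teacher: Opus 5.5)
Your reduction of type~II to type~I through the shifts with $cr-st=1$ is the same as the paper's argument and is fine. The gap is the $(a,b)$-representation \eqref{eq:euler-ab-normalized}, which you leave at ``should force'' and yourself flag as the main obstacle. Your premise that only the classes of $(1,1)$ and $(1,-1)$ are candidates is not justified. Lemma~\ref{lem:pell-general} only places $(x_0,y_0)$ in a box. Tracking residues modulo $2a$ and $2b$ determines $y_0$ only modulo $b$, never $y_0$ itself. What is missing is the step that turns the congruence into an equality.

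\emph{The missing step.} You already have $Y_{k+1}=rY_k+bX_k\equiv rY_k\pmod b$, $r^2\equiv1\pmod b$, and $y\equiv1\pmod b$ from \eqref{eq:euler-bc-seq}. Together these give $y_0\equiv r^{l'}\pmod b$. The bound in Lemma~\ref{lem:pell-general} gives
\[
 y_0^2\le\frac{(r-1)(b-a)}{2a}=\frac{b(b-a)}{2(r+1)}<\frac{b^2}{4},
\]
so $|y_0|<b/2$, and the congruence becomes an equality.
\begin{itemize}
\item For even $l'$ it forces $y_0=1$, hence $x_0=1$.
\item For odd $l'$ it forces $y_0\in\{r,\,r-b\}$, since $0<r<b$. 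If $y_0=r$, then $x_0^2=a^2+1$, which is impossible. If $y_0=r-b$, then $x_0=r-a$ and $y_0\sqrt a+x_0\sqrt b=(\sqrt a+\sqrt b)\alpha^{-1}$. The exponent then drops to the even number $l'-1\geq0$.
\end{itemize}

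\emph{Your $(1,-1)$ worry.} The same congruence settles it directly. In that class $Y_k\equiv-r^k\pmod b$. Even $k$ gives $y\equiv-1\pmod b$, which is excluded. Odd $k$ requires $r=b-1$, i.e.\ $b=a+2$. In that case $-\sqrt a+\sqrt b=(\sqrt a+\sqrt b)\alpha^{-1}$, so reindexing absorbs it. Working modulo $b$ suffices throughout. Neither the modulus $2a$ nor the hypothesis $b>4000$ is needed; for the $(1,1)$ class with odd index you only need $r\not\equiv1\pmod b$, which follows from $1<r<b$.
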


\begin{proof}
Since $cr-st=1$, the shifts in the proof of
Lemma~\ref{lem:solution-types} convert type II to type I. Initial terms $(-t,-s)$ at indices $(2m+1,2n+1)$ become $(1,1)$
at $(2m,2n)$, while $(t,s)$ become $(-1,-1)$ at $(2m+2,2n+2)$.
Thus the last two representations hold after reindexing.

For the $(a,b)$-equation, choose a reduced solution $(x_0,y_0)$
and exponent $l'$ as in Lemma~\ref{lem:pell-general}.
The even $(b,c)$-representation gives $y\equiv1\pmod b$, hence
$y_0\equiv r^{l'}\pmod b$. Moreover
$|y_0|^2\leq b(b-a)/(2(r+1))<b^2/4$.
If $l'$ is even, then $y_0=x_0=1$.

Suppose now that $l'$ is odd. Then $y_0=r$ or $r-b$. The former would give
$x_0^2=a^2+1$, while the latter gives $x_0=r-a$ and
$y_0\sqrt a+x_0\sqrt b=(\sqrt a+\sqrt b)\alpha^{-1}$.
In either case the final exponent is even, proving the first representation.
\end{proof}

Write $X_j,Y_j$ for the coordinates of
$(\sqrt a+\sqrt b)\alpha^j$.

\begin{lemma}\label{lem:euler-index-order}
For an irregular extension of an Euler triple,
\[
 n\leq m<l<2n,\qquad
 l\log\alpha<m\log\beta<n\log\gamma.
\]
In particular,
\begin{equation}\label{eq:euler-index-difference}
 m>(l-m)\sqrt{b/a}\log\alpha.
\end{equation}
\end{lemma}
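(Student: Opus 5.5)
We plan to compare sizes of the three representations of the common solution from Lemma~\ref{lem:euler-type-I}. From \eqref{eq:euler-ab-normalized}--\eqref{eq:euler-bc-seq} we have
\[
 x\sqrt b\approx \tfrac{\sqrt a+\sqrt b}{2}\alpha^{2l},\qquad
 x\sqrt c\approx \tfrac{\lambda\sqrt a+\sqrt c}{2}\beta^{2m},\qquad
 y\sqrt c\approx \tfrac{\lambda\sqrt b+\sqrt c}{2}\gamma^{2n},
\]
up to relative errors of order $\alpha^{-4l}$, $\beta^{-4m}$, $\gamma^{-4n}$. These errors are negligible because $m\geq2$ and $n\geq1$ by Lemma~\ref{lem:constant-index-bounds}, which applies since $d>d_+$ by Lemma~\ref{lem:irregular-above-regular}. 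Eliminating $x$ and $y$ (using $y\sqrt a\approx x\sqrt b$ from the $(a,b)$ representation, and similarly for the others) gives approximate linear forms
\[
 2l\log\alpha-2m\log\beta=\log\tfrac{\sqrt b(\lambda\sqrt a+\sqrt c)}{\sqrt c(\sqrt a+\sqrt b)}+O(\cdot),\qquad
 2m\log\beta-2n\log\gamma=\log\tfrac{\sqrt a(\lambda\sqrt b+\sqrt c)}{\sqrt b(\lambda\sqrt a+\sqrt c)}+O(\cdot).
\]
We will check the signs of the right-hand sides directly, using $c=a+b+2r$ and the sizes $\alpha\approx2\sqrt{ab}$, $\beta\approx2\sqrt{ac}$, $\gamma\approx2\sqrt{bc}$. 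The constant $\sqrt b/\sqrt c\cdot(\sqrt c\pm\sqrt a)/(\sqrt a+\sqrt b)$ is below $1$, since $\sqrt b(\sqrt c+\sqrt a)<\sqrt c(\sqrt a+\sqrt b)$ reduces to $\sqrt b<\sqrt c$. The second constant is $\approx\sqrt{a/b}<1$. Both are bounded away from $1$ enough, given $b>4000$, to dominate the error terms. This yields the strict chain $l\log\alpha<m\log\beta<n\log\gamma$.

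Next we derive the index inequalities from the logarithmic chain. Because $\log\alpha<\log\beta<\log\gamma$, the inequality $m\log\beta<n\log\gamma$ alone does not give $n\le m$. Instead we use Lemma~\ref{lem:mn-comparison}, type I (all Euler solutions are type I after reindexing), which gives $n\leq m\leq 2n$. For $m<l$ we must show that $l\log\alpha$ is close to $m\log\beta$ while $\log\alpha<\log\beta$. We will use $x\sqrt c\ge x\sqrt b$ together with an upper bound: $\beta^{2m}\lesssim \alpha^{2l}\cdot$const, and the constant ratio $\log\beta/\log\alpha$ is less than about $2$. To get strict $m<l$ we argue that $l\le m$ would force $\alpha^{2l}\le\alpha^{2m}<\beta^{2m}/(\text{large factor})$, contradicting $\alpha^{2l}\gtrsim\beta^{2m}\sqrt{a}/\sqrt{c}$ once $m\geq 2$. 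The factor is $(\beta/\alpha)^{2m}\approx(c/b)^m$, and $c/b\geq 1+\ldots$ needs care when $c\approx b$ (e.g.\ $a$ small). This is the step I expect to be the main obstacle. My first attempt would be to use $c=a+b+2r>b+2\sqrt{ab}$, so $\log(\beta/\alpha)\gtrsim\sqrt{a/b}$, and compare $2m\sqrt{a/b}$ with $\tfrac12\log(c/b)\approx\sqrt{a/b}$. If this is insufficient, I would fall back on a congruence argument from Lemma~\ref{lem:quadratic-expansions} modulo $b$ or $c$. The inequality $l<2n$ follows analogously from $\alpha^{2l}\lesssim\gamma^{2n}$ with $\gamma<\alpha^2$, since $bc+1<(ab)^2$ fails in general. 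More precisely, we use $\log\gamma<2\log\alpha$ when $c<4ab$, which holds for Euler triples, since $c=a+b+2r<4ab$.

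Finally, \eqref{eq:euler-index-difference} follows from $l\log\alpha<m\log\beta$:
\[
 (l-m)\log\alpha<m\log(\beta/\alpha).
\]
We bound $\log(\beta/\alpha)\le\log\sqrt{c/b}+\epsilon\le\tfrac12(c-b)/b+\epsilon$, and the Euler relation gives $(c-b)/b=(a+2r)/b\approx2\sqrt{a/b}$. This yields $\log(\beta/\alpha)<\sqrt{a/b}$ after a careful check of the $\epsilon$ terms, that is, $m>(l-m)\sqrt{b/a}\log\alpha$.
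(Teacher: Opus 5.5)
The step that does not go through is $m<l$, the one you flag yourself. Write $x=p\alpha^{2l}+(1-p)\alpha^{-2l}=q\beta^{2m}+(1-q)\beta^{-2m}$ with $p=(1+\sqrt{a/b})/2$ and $q=(1+\lambda\sqrt{a/c})/2$. Then the constant linking $\alpha^{2l}$ to $\beta^{2m}$ is exactly $q/p$, and replacing it by $\sqrt{a/c}$ loses far too much. To contradict $l\le m$ you would then need $2m\log(\beta/\alpha)>\frac12\log(c/a)$. Since $\log(\beta/\alpha)<\sqrt{a/b}$, this forces $m>\frac14\sqrt{b/a}\,\log(c/a)$, which $m\ge2$ does not supply. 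For $\lambda=-1$ with $b/a$ near $1$ the bound is not even valid, since then $q/p\approx1/4$ while $\sqrt{a/c}\approx1/2$.

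The real margin is only a factor of about $m$: for $\lambda=-1$ and $a\ll b$ you are comparing roughly $2m\sqrt{a/b}$ with $\log(p/q)\approx2\sqrt{a/b}$. With the exact constant the step does close:
\begin{itemize}
\item $\log(\beta/\alpha)=\int_r^{r+a}du/\sqrt{u^2-1}>\log(1+a/r)$;
\item $(1+a/r)^2>1+\sqrt{a/b}$;
\item $p/q\le(1+\sqrt{a/b})/(1-\sqrt{a/c})<(1+\sqrt{a/b})^2$, because $\sqrt c>\sqrt a+\sqrt b$.
\end{itemize}
So $(\beta/\alpha)^{2m}>p/q$ with room to spare for $m\ge2$, which gives $x^{(a,c)}_{2m}>X_{2m}$ and hence $l>m$. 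The paper avoids these estimates by comparing recurrences. $X_{2j}$ and $x^{(a,c)}_{2j}$ both start at $1$, and $x^{(a,c)}_2=1+2a(c+\lambda s)\ge1+2a(b+r)=X_2$ because $c-s=b+r$. Their coefficients satisfy $4ab+2<4ac+2$, so $x^{(a,c)}_{2j}>X_{2j}$ for $j\ge2$, and $X_{2l}=x>X_{2m}$.

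There are three smaller points. First, your derivation of $l<2n$ from $l\log\alpha<n\log\gamma$ and $\gamma<\alpha^2$ is correct, since $\gamma<2t$ and $\alpha^2-2t=2b(a-1)+1+2r(\sqrt{ab}-1)>0$. It is shorter than the paper's separate comparison of $Y_{4j}$ with $y^{(b,c)}_{2j}$.

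Second, your logarithmic chain is the paper's argument in approximate form, but the constants are not bounded away from $1$: for $\lambda=1$ the first differs from $1$ by roughly $a/c$. You must either track the corrections of size $\alpha^{-4l}$ and $\beta^{-4m}$, or argue exactly, as the paper does, from $q<p$ and the monotonicity of $u\mapsto pu+(1-p)u^{-1}$ on $[1,\infty)$.

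Third, for \eqref{eq:euler-index-difference}, the bound $\log(\beta/\alpha)\le\frac12(c-b)/b+\epsilon$ fails in its main term, not in $\epsilon$, because $\frac{c-b}{2b}=\frac a{2b}+\frac rb>\sqrt{a/b}$. Use $s=r+a$ instead, which gives $\log\beta-\log\alpha=\int_r^{r+a}du/\sqrt{u^2-1}<a/\sqrt{r^2-1}=\sqrt{a/b}$.
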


\begin{proof}
Lemmas~\ref{lem:irregular-above-regular},
\ref{lem:constant-index-bounds}, and \ref{lem:mn-comparison}
give $m\geq2$ and $n\leq m$.
We use the recurrence comparison from the proof of
Lemma~\ref{lem:mn-comparison}.
For positive sequences $A_j,B_j$ with recurrence coefficients
$2<P<Q$, a common zeroth term and $B_1\geq A_1>A_0$,
it gives $B_j>A_j$ for $j\geq2$.

Apply this to $A_j=X_{2j}$ and $B_j=x^{(a,c)}_{2j}$.
Their coefficients are $4ab+2<4ac+2$, and
$A_0=B_0=1$, $A_1=1+2a(b+r)$,
$B_1=1+2a(c+\lambda s)\geq A_1$, since $c-s=b+r$.
Thus $x=X_{2l}>X_{2m}$ and $l>m$.
For $C_j=Y_{4j}$ and $E_j=y^{(b,c)}_{2j}$, we have $C_0=E_0=1$
and $E_1=1+2b(c+\lambda t)>1$, since $c-t=a+r$.
Moreover,
\[
 \frac{C_1-E_1}{2b}
 \geq(4r^2-1)a-2b+4r^3-5r
 \geq2r^2+1+4r^3-5r>0,
\]
using $b\leq r^2-1$ and $a\geq1$.
Also $\alpha^2>2t>\gamma$, since
$\alpha^2-2t=2b(a-1)+1+2r(\sqrt{ab}-1)>0$.
Their recurrence coefficients therefore satisfy
$\alpha^4+\alpha^{-4}>\gamma^2+\gamma^{-2}$.
The same comparison gives $Y_{4n}>y^{(b,c)}_{2n}=Y_{2l}$, hence $l<2n$.

For the logarithmic inequalities, put $p=(1+\sqrt{a/b})/2$ and $q=(1+\lambda\sqrt{a/c})/2$.
The common $x$ equals $f_p(\alpha^{2l})=f_q(\beta^{2m})$,
where $f_p(u)=pu+(1-p)u^{-1}$.
Since $0<q<p<1$, $p>1/2$, and $f_p$ increases on $[1,\infty)$,
we obtain $\alpha^{2l}<\beta^{2m}$.
For $z$, the corresponding functions are
$Au+(\lambda-A)u^{-1}$, with
$A=(\lambda+\sqrt{c/a})/2>B=(\lambda+\sqrt{c/b})/2>0$.
Both increase, and the first exceeds the second for $u>1$, so $\beta^{2m}<\gamma^{2n}$.
Finally,
\[
 \log\beta-\log\alpha
 =\int_r^{r+a}\frac{du}{\sqrt{u^2-1}}
 <\sqrt{a/b},
\]
which gives \eqref{eq:euler-index-difference}.
\end{proof}

Here $c/b$ may be close to $1$, so a bound of order $\sqrt{c/b}$
does not give the required growth of $m$. We compare the three
representations in
\eqref{eq:euler-ab-normalized}--\eqref{eq:euler-bc-seq}
modulo $8a^2$ and $8b^2$.
The polynomial $1+2a(cj^2+\lambda sj)$ has the same first two terms
as $x^{(a,c)}_{2j}$, and its residual in that recurrence is
$-8a^2c[c(j+1)^2+\lambda s(j+1)]$.
It therefore agrees with the coordinate modulo $8a^2$.
Applying the same calculation to the other coordinates in
\eqref{eq:euler-ab-normalized}--\eqref{eq:euler-bc-seq}, and dividing by
$2a$ or $2b$, gives
\begin{align}
 bl^2+rl&\equiv cm^2+\lambda sm\pmod{4a},\label{eq:euler-cong-a}\\
 al^2+rl&\equiv cn^2+\lambda tn\pmod{4b}.\label{eq:euler-cong-b}
\end{align}

\begin{proposition}\label{prop:euler-m5}
For an irregular extension of an Euler triple with $c\geq10^7$, one has
\[
 m^5>\frac{c\log c}{32}.
\]
\end{proposition}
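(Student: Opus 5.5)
The plan is to turn the two congruences \eqref{eq:euler-cong-a} and \eqref{eq:euler-cong-b} into a single integer relation. That integer will be shown to be nonzero and, at the same time, small compared with $c$ unless $m$ is large. The extra factor $\log c$ will come from \eqref{eq:euler-index-difference}.

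First I simplify the congruences using the Euler relations $c=a+b+2r$, $s=a+r$, $t=b+r$ and $r^2=ab+1\equiv1$ modulo $a$ and modulo $b$. Modulo $b$, \eqref{eq:euler-cong-b} becomes
\[
 al^2+rl\equiv(a+2r)n^2+\lambda rn\pmod b.
\]
Multiplying by $r$ and using $r^2\equiv1$ isolates the part of the relation not involving $r$. Modulo $a$, \eqref{eq:euler-cong-a} becomes
\[
 bl^2+rl\equiv(b+2r)m^2+\lambda rm\pmod a.
\]
Since $\gcd(a,r)=\gcd(b,r)=1$, I then eliminate $r$ between the two relations by a suitable linear combination. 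The goal is one congruence of the shape $E\equiv0\pmod{ab}$, or possibly modulo $b$ alone with $a$ entering only through bounded coefficients. Here $E$ is an explicit polynomial in $l,m,n,a,b,\lambda$.

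Next I bound $|E|$. By Lemma~\ref{lem:euler-index-order}, $n\le m<l<2n$, so all indices are $O(m)$. More importantly, \eqref{eq:euler-index-difference} gives
\[
 l-m<\frac{m\sqrt{a/b}}{\log\alpha},
\]
and the analogous comparison of $m\log\beta$ with $n\log\gamma$ bounds $m-n$. Writing $l=m+\delta$ and $n=m-\varepsilon$, the leading terms $bl^2-bm^2$ and $al^2-an^2$ collapse to $O(bm\delta)$ and $O(am\varepsilon)$. Thus $|E|$ is bounded by a small power of $m$ times factors that $\sqrt{a/b}/\log\alpha$ keeps below $b$ or $ab$. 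Since $c<4b$ for an Euler triple (indeed $c\le a+b+2\sqrt{ab+1}$ with $a<b$), and $\log\alpha\ge\tfrac12\log(ab)\gg\log c$ up to constants, I expect a dichotomy. Either $E=0$, or
\[
 c\lesssim\frac{m^4\cdot m}{\log\alpha}\cdot\text{(const)},
\]
which rearranges to $m^5>c\log c/32$ after tracking constants. The hypothesis $c\ge10^7$ absorbs lower-order terms and the comparison between $\log\alpha$ and $\log c$.

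The main obstacle is excluding $E=0$. I would first try to show that $E=0$ forces a relation among $l,m,n$ incompatible with the strict inequalities $n\le m<l<2n$, or that it forces $\lambda$ and the indices into the configuration of the regular extension $d_+$. The latter is excluded by Lemma~\ref{lem:irregular-above-regular} and Lemma~\ref{lem:constant-index-bounds}. If the single combination is too lossy, I would fall back on a case split. When $a$ is small compared with $b$, use only the modulo-$b$ relation, where $|E|<b$ forces exact equality. When $a$ is comparable to $b$, use the modulo-$a$ relation, where $c<4b$ is comparable to $a$. Exact equality in either case is then refuted by a parity or size argument.
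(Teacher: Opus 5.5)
There is a genuine gap. The proposal does not contain the one congruence that drives the proof, and its fallbacks cannot replace it.

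\textbf{The combined congruence.} Your first step is right. Reducing \eqref{eq:euler-cong-a} and \eqref{eq:euler-cong-b} with $c=a+b+2r$, $s=a+r$, $t=b+r$ gives $bL_1\equiv rM\pmod a$ and $aL_2\equiv rN\pmod b$, where $M=2m^2+\lambda m-l$, $N=2n^2+\lambda n-l$, $L_1=l^2-m^2$ and $L_2=l^2-n^2$.

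The combination that works keeps $r$. The quantity $K=(rN-aL_2)/b$ is an integer, and $\gcd(a,r)=1$, so $NL_1\equiv KM\pmod a$. Multiplying by $b$ gives $S:=aML_2+bNL_1\equiv rMN\pmod{ab}$.

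Eliminating $r$, as you propose, goes the wrong way. Here $r$ multiplies the non-units $M$ and $N$, so an $r$-free consequence needs squaring with $r^2\equiv1$, e.g.\ $S^2\equiv M^2N^2\pmod{ab}$. Then both sides dwarf the modulus.

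The argument rests on comparing $rMN<4m^2n^2r$ with $ab=r^2-1$. If $r\ge4m^4$, then $0<rMN<ab$, and since $S>0$ the congruence gives $S\ge rMN$.

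\textbf{The single-modulus fallback fails.} By \eqref{eq:euler-index-difference}, $\sqrt{b/a}<m/\log\alpha$, and $n>m/2$. Hence $rN>b$ and $rM>a$ always hold. So neither relation alone can be upgraded to an equality, whatever the ratio $b/a$.

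\textbf{Excluding the equality case.} The step you flag as the main obstacle is the heart of the proof. It is settled by size estimates, not by any link to the regular extension. Assume $r\ge4m^4$, so that $S\ge rMN$, and split according to which term of $S$ dominates.
\begin{itemize}
\item If $aML_2<3bNL_1$, then $rM<4bL_1$. Using \eqref{eq:euler-index-difference} and $r>\sqrt{ab}$, this becomes $\log\alpha<4m(l+m)/M\le12m/(2m-3)\le36/5$. That contradicts $\log\alpha>\tfrac12\log c>8$, which is where $c\ge10^7$ is used.
\item Otherwise $S\le\tfrac43aML_2$, so $r/a\le4L_2/(3N)<3$ and hence $b<9a$. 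Only now does the comparison of $m\log\beta$ with $n\log\gamma$ control $m-n$; it gives $m/n<7/6$. Together with $l/m<9/8$, this sharpens the bound to $r/a<140/207<1$, which is impossible.
\end{itemize}

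\textbf{From $r$ to $c$.} The argument above yields $r<4m^4$. This bound contains no logarithm and on its own only gives $ab<16m^8$. The fifth power and the factor $\log c$ come from a separate final step. First, $c/r<3+\sqrt{b/a}$. Second, \eqref{eq:euler-index-difference} with $l-m\ge1$ gives both $\sqrt{b/a}\log\alpha<m$ and $\log\alpha<m$. Hence $c\log\alpha<4rm<16m^5$, and $\log\alpha>\tfrac12\log c$ finishes the proof.

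Your proposal has neither the combined congruence with $r$ retained nor this exclusion argument, so as written it does not reach the statement.
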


\begin{proof}
Since $a+b\leq r^2$ and $r\geq2$, we have
$c\leq r^2+2r<\alpha^2$, so $\log\alpha>\tfrac12\log c>8$.
Lemma~\ref{lem:euler-index-order} gives $m>\log\alpha>8$ and $n\geq5$.
We first show that $r<4m^4$.

Put
\begin{align*}
 M&=2m^2+\lambda m-l,& N&=2n^2+\lambda n-l,\\
 L_1&=l^2-m^2,& L_2&=l^2-n^2.
\end{align*}
By \eqref{eq:euler-cong-a} and \eqref{eq:euler-cong-b}, we have
$bL_1\equiv rM\pmod a$ and $aL_2\equiv rN\pmod b$.
Thus $K=(rN-aL_2)/b$ is an integer, and
\[
 rNL_1\equiv bKL_1\equiv rKM\pmod a.
\]
Since $(a,r)=1$, this gives $NL_1\equiv KM\pmod a$.
Multiplying by $b$, we obtain
\begin{equation}\label{eq:euler-master-congruence}
 S:=aML_2+bNL_1\equiv rMN\pmod{ab}.
\end{equation}

Suppose that $4m^4\leq r$. Since $m<l<2n\leq2m$, we have
$0<M<2m^2$, $0<N<2n^2$, and $L_1,L_2>0$.
Hence $MN<4m^2n^2\leq r$ and $0<rMN\leq r^2-r<ab$.
As $S>0$, \eqref{eq:euler-master-congruence} implies $S\geq rMN$.

If $aML_2<3bNL_1$, then $rM<4bL_1$.
Using \eqref{eq:euler-index-difference} and $M\geq2m^2-3m$, we obtain
\[
 \log\alpha<\frac{4m(l+m)}{M}
 \leq\frac{12m}{2m-3}\leq\frac{36}{5}<8,
\]
a contradiction.
Otherwise $S\leq(4/3)aML_2$, so
\[
 \frac ra\leq\frac{4L_2}{3N}\leq\frac{4n}{2n-3}\leq\frac{20}{7}<3.
\]
This gives $b/a<9$.
Since $\gamma/\beta<\sqrt{b/a+1/(ac)}<\sqrt{10}$, the logarithmic
comparison gives $m/n<1+\log10/16<7/6$.
Also \eqref{eq:euler-index-difference} gives $l/m<9/8$, hence $l/n<4/3$.
As $n\geq5$, it follows that
\[
 L_2<\frac79n^2,\qquad N>\frac{23}{15}n^2,
 \qquad \frac ra<\frac{140}{207}<1.
\]
This contradicts $r^2=ab+1>a^2$, proving $r<4m^4$.

Finally, $c/r=a/r+b/r+2<3+\sqrt{b/a}$, while
Lemma~\ref{lem:euler-index-order} gives both
$\log\alpha<m$ and $\sqrt{b/a}\log\alpha<m$.
Therefore
\[
 c\log\alpha<4rm<16m^5.
\]
Together with $\log\alpha>\tfrac12\log c$, this proves the assertion.
\end{proof}

\section{Degree-one triples}\label{sec:degree-one}

Let $T=\{a,b,c\}$ have degree one and put $e=d_-(T)$. By
Proposition~\ref{prop:degree-one-form}, there is a sign
$\varepsilon\in\{\pm1\}$ such that
\begin{equation}\label{eq:degree-one-structure}
 e=a+b+2\varepsilon r,\qquad
 c=4r(r+\varepsilon a)(b+\varepsilon r).
\end{equation}

\begin{lemma}\label{lem:degree-one-size}
For a degree-one triple one has
\[
 b^2<c<\min\{16b^3,32ab^2\},\qquad c>4ab+a+b.
\]
\end{lemma}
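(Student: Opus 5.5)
The plan is to work directly from \eqref{eq:degree-one-structure}, treating the two signs $\varepsilon=\pm1$ simultaneously. We have $r^2=ab+1$, $a<b$, and $c=4r(r+\varepsilon a)(b+\varepsilon r)$. Every bound will come from elementary estimates of the three factors $r$, $r+\varepsilon a$ and $b+\varepsilon r$ in terms of $a$ and $b$. We use $\sqrt{ab}<r\le\sqrt{ab}+1/(2\sqrt{ab})$, together with $r<b$ (since $a<b$ gives $ab+1\le b^2$ with equality impossible). We also use $a<r$ and $r^2\ge 2b+1$ when $a\ge2$, with the case $a=1$ handled separately.

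For the lower bound $c>b^2$, write $c=4r(r+\varepsilon a)(b+\varepsilon r)$. When $\varepsilon=+1$ this is trivial: $c>4r\cdot r\cdot b=4(ab+1)b>b^2$. When $\varepsilon=-1$ we need $r-a$ and $b-r$ not too small. We have $r-a>0$ because $r^2=ab+1>a^2$, and $r-a\ge1$ by integrality. Also $(r-a)(r+a)=a(b-a)+1$, so $r-a\ge (a(b-a)+1)/(2r)$. Similarly $b-r\ge (b^2-ab-1)/(2b)$. We also know $e=a+b-2r>0$, so $e\ge1$, which rules out degenerate cases. Plugging these in gives $c\ge 4r\cdot\frac{a(b-a)+1}{2r}\cdot(b-r)=2(a(b-a)+1)(b-r)$. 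Combined with $b-r\ge (b-a)/2$ roughly, this yields a bound of order $a(b-a)^2$. That is comparable to $b^2$ only when $a$ is small relative to $b$. So instead I would use the Euler structure: $\{a,b,e\}$ is Euler with $e>0$. Ordering the triple, the relevant element is $b'=\max\{a,b,e\}$ and the Euler relation gives $r-a$ and $b-r$ as $\sqrt{\cdot}$ quantities. Concretely, $r-a=\sqrt{ae+1}$-type identities hold via \eqref{eq:dpm-squares}: $a e+1=(r-a)^2$ and $be+1=(b-r)^2$. Hence $c=4r\sqrt{(ae+1)(be+1)}>4\sqrt{ab}\cdot\sqrt{ab}\,e\ge4ab\,e$. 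By itself this is not yet $b^2$, but since $e=a+b-2r$ and $\{a,b,e\}$ is a genuine Euler triple with $e\ne a,b$, the largest of $a,b,e$ equals the sum of the other two plus $2\sqrt{\cdot+1}$. Checking the two orderings ($e>b$, giving $c>4abe>4ab^2>b^2$; and $e<b$, giving $b=a+e+2\sqrt{ae+1}$, so $b<(\sqrt a+\sqrt e)^2+2$ and $(b-r)^2=be+1$ with $b-r\ge\sqrt{be}$) should give $c>4r\sqrt{ae}\sqrt{be}\ge 4\sqrt{ab}\,e\sqrt{ab}$. From there I would use $b\le 4\max(a,e)+O(1)$ to get $c>b^2$. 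The $\varepsilon=+1$ case also satisfies $c>4ab+a+b$ directly, since $c>4r^2b>4ab+4b$.

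For the upper bounds, note $r+\varepsilon a\le r+a<2r$ and $b+\varepsilon r\le b+r<2b$. So $c<16r^2b=16(ab+1)b$, which is at most $16b^3$ after using $ab+1\le b^2$ (strictly less, since $a<b$). For $32ab^2$, use $16(ab+1)b\le32ab^2$, which holds as $ab\ge1$.

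Finally, $c>4ab+a+b$ follows in the $\varepsilon=-1$ case from $c=d_+(a,b,e)>4abe+e$ (Lemma~\ref{lem:basic-descent}) when $e$ is large. The main obstacle is small $e$, and in particular $e=1$. There I would instead use Lemma~\ref{lem:basic-descent} applied to $\{a,b,c\}$ itself: $c$ is not Euler over $\{a,b\}$ (degree one, not zero), so $c>4ab$. The strict refinement to $4ab+a+b$ needs an extra integrality check. Because $c\ne a+b+2r$, the next admissible extension exceeds $d_+(a,b,\cdot)$-type values, and I would verify that this is at least $4ab+a+b+1$ via $c=4r(r-a)(b-r)\ge4r\cdot(b-r)$ together with the Euler relations above. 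This final inequality is the delicate step.
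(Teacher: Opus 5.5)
Your upper bounds and the case $\varepsilon=+1$ are fine and match the paper. The inequality $c>4ab+a+b$, however, is left unproved in the one case where it needs an argument: $\varepsilon=-1$ with $e=1$. This case really occurs: for $\{3,8,120\}$ one has $r=5$ and $e=3+8-10=1$. When $\varepsilon=-1$ you have $e<b$, so Lemma~\ref{lem:basic-descent} applied to $\{e,a,b\}$ gives only $c>4abe+b$, which for $e=1$ is $c>4ab+b$.

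The route you sketch cannot close this. The step $c=4r(r-a)(b-r)\ge4r(b-r)$ discards the factor $r-a=\sqrt{ae+1}$. For $e=1$ what remains is $4r\sqrt{b+1}$, roughly $4b\sqrt{a}$, which is below $4ab+a+b$ once $a\ge2$; for $\{3,8,120\}$ it gives $60<107$. The appeal to ``the next admissible extension'' does not supply the inequality either. What is missing is to use the explicit formula rather than the lemma: $c=d_+(a,b,e)=a+b+e+2abe+2\sqrt{(ab+1)(ae+1)(be+1)}$. The square root exceeds $\sqrt{ab\cdot ae\cdot be}=abe$, so $c>4abe+a+b+e>4ab+a+b$ for every $e\ge1$ and both signs at once. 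In fact, since $\{a,b,e\}$ is Euler, $d_-(a,b,e)=0$ gives $c=4abe+2(a+b+e)$ exactly. This is the paper's one-line argument.

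For $c>b^2$ with $\varepsilon=-1$, your reduction to $c>4abe$ together with $b<4ae$ is the right one, and it is the paper's. But the estimate $b\le4\max(a,e)+O(1)$ is too weak to finish. When $\min(a,e)=1$ you have $4ae=4\max(a,e)$, so any positive additive constant destroys the comparison, and $\min(a,e)=1$ does occur (again $\{3,8,120\}$). You need a sharp version. The paper uses $u=r-a$: then $u^2=ae+1$, so $u\ge2$, and $b=a+e+2u\le u^2+2u<4(u^2-1)=4ae$. Alternatively, $a\ne e$ gives $ae+1<\max(a,e)^2$, hence $b<4\max(a,e)-1\le4ae$. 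Note also that the ordering $e>b$ never arises when $\varepsilon=-1$, since $e=a+b-2r<b$.
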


\begin{proof}
Since $a<r<b$ and $r^2<2ab$, the formula for $c$ gives
\[
 c<16r^2b<\min\{16b^3,32ab^2\}.
\]
Also $c=d_+(a,b,e)>4abe+a+b+e>4ab+a+b$.

For the lower bound $c>b^2$, first suppose that $\varepsilon=1$.
Then $e>b$, and $c>4abe>b^2$.
Suppose next that $\varepsilon=-1$, and put $u=r-a\geq2$.
Then $u^2=ae+1$, $b=a+e+2u$, and $a+e\leq ae+1=u^2$, so
\[
 b\leq u^2+2u<4(u^2-1)=4ae.
\]
Again $c=d_+(a,b,e)>4abe$, and therefore $c>b^2$.
\end{proof}

\begin{proposition}\label{prop:degree-one-m-lower}
Every degree-one triple admitting an irregular extension satisfies
\[
 m>c^{1/8}.
\]
\end{proposition}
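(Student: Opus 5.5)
We argue by contradiction. Suppose $m\leq c^{1/8}$, and use the congruences of Lemma~\ref{lem:quadratic-expansions} to force an exact equality of integers. By Lemma~\ref{lem:degree-one-size} we have $c>4ab+a+b$ and $c>b^2$. Since $b>4000$ by Lemma~\ref{lem:irregular-above-regular}, Lemma~\ref{lem:constant-index-bounds} applies in both types and gives $m\geq2$, with $m,n\geq4$ in type~II. Lemma~\ref{lem:mn-comparison} gives $n\leq m\leq2n+1$, so $n$ is also at most about $c^{1/8}$.

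\textbf{Type I.} Here $z_1=z_2=\lambda$ and $x_1=y_2=1$. Equating \eqref{eq:z-even-general-c2} with its $(b,c)$-analogue and dividing by $2c$ gives
\[
 am^2\lambda+sm\equiv bn^2\lambda+tn\pmod{4c}.
\]
The absolute value of the difference of the two sides is at most roughly $bm^2+tm$. To control this, we use $b<c^{1/2}$ and $t=\sqrt{bc+1}<c^{3/4}$, together with $m\leq c^{1/8}$. These give $bm^2<c^{3/4}$ and $tm<c^{7/8}$, both well below $4c$. The congruence therefore becomes the exact equality
\[
 \lambda(am^2-bn^2)=tn-sm.
\]
Next we read off a contradiction. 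Because $t/s\approx\sqrt{b/a}$, the right-hand side forces $m\approx n\sqrt{b/a}$ up to the small correction $(am^2-bn^2)/s$. Substituting back, the identity becomes a relation of the form $(tn-sm)(tn+sm)=(t^2n^2-s^2m^2)$, where
\[
 t^2n^2-s^2m^2=c(bn^2-am^2)+(n^2-m^2).
\]
Combined with the equality above, this gives
\[
 (tn-sm)\bigl(\lambda c+tn+sm\bigr)=n^2-m^2.
\]
If $tn\neq sm$, the left side is at least about $c$ in absolute value, while $|n^2-m^2|<4c^{1/4}$, which is impossible. If $tn=sm$, then $n=m$. Since $s\neq t$, this is incompatible with $tn=sm$ and $m\geq2$.

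\textbf{Type II.} Here $(z_1,z_2)=(\lambda t,\lambda s)$, and $x_1,y_2$ are the initial coordinates with $z_1^2a-cx_1^2=a-c$, which gives $x_1=r$ and likewise $y_2=r$. Equating \eqref{eq:z-odd-general-c2} with its $(b,c)$-analogue modulo $4c^2$ makes the constant terms $st\lambda$ cancel. Dividing by $c$ gives
\[
 (2m+1)r+2am(m+1)\lambda st\equiv(2n+1)r+2bn(n+1)\lambda st\pmod{4c}.
\]
The factor $st$ is about $c\sqrt{ab}$, which is too large for a direct size argument. So we first reduce it modulo $c$: $s^2t^2\equiv1$, hence $st\equiv\pm$ something small modulo $c$. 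The cleanest route is to multiply through by $st$ and use $(st)^2=(ac+1)(bc+1)\equiv1\pmod c$. This reduces the relation modulo $c$ to
\[
 2(am(m+1)-bn(n+1))\lambda\equiv 2r\,st\,(n-m)\pmod c.
\]
Now we use the degree-one structure, where $c=4r(r+\varepsilon a)(b+\varepsilon r)$. One has $st\equiv$ a small explicit quantity modulo $c$, namely $cr-st=\pm d_-=\pm e$-type terms by \eqref{eq:dpm-squares}. Using $cr\mp st$ related to $e=a+b\pm2r$, we get $st\equiv \pm(\text{something of size }\ll b)$ after multiplying by $r$, with $r^2\equiv$ small modulo $c$ because $r\mid c$. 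The relation thus becomes an integer congruence between quantities of size roughly $b\,m^2\cdot b$. These are bounded by $b^2c^{1/4}\cdot O(1)$, which may still exceed $c$. This is why the exponent is $1/8$: with $b<c^{1/2}$ and the extra factors of $r$ or $e$, the bound needed is something like $b^{3/2}m^2\ll c$. That holds when $m\leq c^{1/8}$, using $c>b^2$ and also $c<32ab^2$ to trade $a$ for $c/b^2$.

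\textbf{Main obstacle.} The main obstacle is this type-II reduction. One must find the correct small representative of $st$ modulo $c$ via the degree-one formula, so that the congruence upgrades to an exact identity. That identity must then be ruled out, presumably by a size and divisibility contradiction with $r\mid c$ and $\gcd(r,a)=1$, as in the Euler case.
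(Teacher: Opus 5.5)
Your type~I argument is correct and matches the paper's: the congruence modulo $4c$ becomes an equality by size, and then $(tn-sm)(\lambda c+tn+sm)=n^2-m^2$ gives the contradiction.

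In type~II, the reduction to an exact identity is easier than you think, but you misidentify the small representative. By \eqref{eq:dpm-squares} one has $(cr-st)^2=ce+1$, not $cr-st=\pm e$, and $r\mid c$ says nothing useful about $r^2$ modulo $c$. The identity you need is $e=d_-(a,b,c)=a+b+c+2abc-2rst$. In degree one this gives $2rst\equiv a+b-e=-2\varepsilon r\pmod c$. Your congruence already contains $2rst$, so it becomes $2C\equiv0\pmod c$ with $C=bn(n+1)-am(m+1)+\lambda\varepsilon r\Delta$. Since $|C|<3bm^2<3c^{3/4}<c/2$, this forces $C=0$. No quantity of size $b^2c^{1/4}$ arises, and the exponent $1/8$ plays no role at this step.

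The genuine gap comes next: you give no argument excluding $C=0$. This is a Pell-type relation, so a bare size or $\gcd(r,a)=1$ argument of the kind you suggest will not close it. The missing idea is to complete squares. Put $M=2m+1$, $N=2n+1$, $\sigma=\lambda\varepsilon$, $P=aM-\sigma r$ and $Q=bN-\sigma r$. One checks that $aQ^2-bP^2=a-b+4abC$. Hence $C=0$ makes $D=(P^2-1)/a=(Q^2-1)/b=aM^2-2\sigma rM+b$ an integer, so $\{a,b,D\}$ is a Diophantine triple.

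Now $n\geq4$ and $r<b$ give $Q>8b$, hence $D>b$. On the other hand, $m\leq c^{1/8}$, $c<32ab^2$, $r^2<2ab$ and $b>4000$ give $D<4ab$; this is where the exponent $1/8$ is actually used. Lemma~\ref{lem:basic-descent} then forces $D=a+b+2r$, that is, $Q=b+r<2b$, contradicting $Q>8b$. Without this step or an equivalent one, the type~II case remains unproved.
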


\begin{proof}
Assume to the contrary that $m\leq c^{1/8}$. Then $n\leq m$ by
Lemma~\ref{lem:mn-comparison}.
Lemma~\ref{lem:irregular-above-regular} gives $b>4000$.
By Lemma~\ref{lem:degree-one-size}, $c>b^2>4000^2$ and $c>4ab+a+b$.
In particular $c^{1/8}>7$, which suffices for the size comparisons below.

\smallskip
\noindent\emph{Type I.}
Here $z_1=z_2=\lambda$ and $x_1=y_2=1$.
The even congruence in Lemma~\ref{lem:quadratic-expansions} gives
\[
 am^2+\lambda sm\equiv bn^2+\lambda tn\pmod{4c}.
\]
Since $a<b<c^{1/2}$ and $s<t<\sqrt2\,c^{3/4}$, we have
\[
 \bigl|am^2+\lambda sm-bn^2-\lambda tn\bigr|
 <2c^{3/4}+2\sqrt2\,c^{7/8}<c.
\]
Thus
\[
 am^2+\lambda sm=bn^2+\lambda tn.
\]
Put $D_0=bn^2-am^2$. Then $D_0=\lambda(sm-tn)$.
Squaring the last equality and using $s^2=ac+1$, $t^2=bc+1$, gives
\[
 m^2-n^2=D_0\bigl(c+\lambda(tn+sm)\bigr).
\]
If $D_0=0$, then $m=n$ and $(b-a)m^2=0$, contrary to $m\geq2$.
Otherwise, as $tn+sm<2\sqrt2\,c^{7/8}<c/2$, the right-hand side
has absolute value greater than $c/2$, contradicting
$|m^2-n^2|\leq m^2\leq c^{1/4}$.

\smallskip
\noindent\emph{Type II.}
Here $z_1=\lambda t$, $z_2=\lambda s$, and $x_1=y_2=r$.
The hypotheses of Lemma~\ref{lem:constant-index-bounds} now hold,
so $m,n\geq4$.
Put $\sigma=\lambda\varepsilon$.
The odd congruence in Lemma~\ref{lem:quadratic-expansions},
multiplied by $\lambda st$, gives
\[
 bn(n+1)-am(m+1)\equiv\lambda rst\Delta\pmod c.
\]
Since $2rst=c(2r^2-1)-2\varepsilon r$ and $4\mid c$, this becomes
\[
 C:=bn(n+1)-am(m+1)+\sigma r\Delta
 \equiv\lambda(c/2)\Delta\pmod c.
\]
The bounds $a<b<c^{1/2}$, $r<b$, and $n\leq m$ give
\[
 |C|<\frac52c^{3/4}+c^{5/8}<\frac c2.
\]
If $\Delta$ were odd, the congruence would give $|C|\geq c/2$.
Hence $\Delta$ is even, and $c\mid C$ forces $C=0$.

Set
\begin{align*}
 M&=2m+1,& N&=2n+1,\\
 P&=aM-\sigma r,& Q&=bN-\sigma r.
\end{align*}
Expanding $C=0$ gives $aQ^2-bP^2=a-b$.
Hence
\[
 D=\frac{P^2-1}{a}=\frac{Q^2-1}{b}
  =aM^2-2\sigma rM+b
\]
is an integer. As $N\geq9$ and $r<b$, we have $Q>8b$ and $D>b$,
so $\{a,b,D\}$ is a Diophantine triple.
On the other hand, $M\leq9m/4$ and $c<32ab^2$ give
\[
 D\leq\frac{81}{16}am^2+\frac92rm+b
  <(23b^{-1/4}+1)ab<4ab.
\]
Indeed, the middle bound uses $r^2<2ab$, $a<b$, and
$(81/16)2^{5/4}+(9/2)2^{9/8}<23$. The last follows from
$b>4000>(23/3)^4$.
Lemma~\ref{lem:basic-descent} therefore forces $D=a+b+2r$.
But then $Q=\sqrt{bD+1}=b+r<2b$, contradicting $Q>8b$.
\end{proof}

\section{Higher-degree triples}\label{sec:higher-degree}

Let $T=\{a,b,c\}$ have degree at least two, and put
\[
 e=d_-(T),\qquad f=d_-(\partial T)>0.
\]
Then $\{a,b,e,c\}$ is regular and
\begin{equation}\label{eq:higher-regular-predecessor}
 c=d_+(a,b,e),\qquad c>4abe.
\end{equation}

\begin{lemma}\label{lem:higher-b8a}
If $\deg T\geq2$ and $T$ admits an irregular extension
$d>d_+(a,b,c)$, then $b>8a$.
\end{lemma}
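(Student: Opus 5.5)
The plan is to split according to the two possibilities for the second descent step recorded at the end of Section~\ref{sec:descent}: either $e<b$, so that $b=d_+(a,e,f)$, or $e>b$, so that $e=d_+(a,b,f)$. The first case should give $b>8a$ directly from Lemma~\ref{lem:basic-descent}. The second case needs an external input that I have not yet pinned down.

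\emph{Case $e<b$.} The ordered triple $\partial T$ has largest element $b$, and the inverse-root identity gives $b=d_+(\min\{a,e\},\max\{a,e\},f)$. Lemma~\ref{lem:basic-descent} gives $d_+(x,y,z)>4xyz+z$, so
\[
 b>4aef .
\]
The integers $e$ and $f$ are positive, since $\deg T\geq2$. They are distinct, because $\{a,e,f\}$ is a Diophantine triple. Hence $ef\geq2$, and therefore $b>8a$. Nothing about the irregular extension is used here.

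\emph{Case $e>b$.} Now $e=d_+(a,b,f)>4abf$, and by \eqref{eq:higher-regular-predecessor}
\[
 c>4abe>16a^2b^2f\geq16a^2b^2 .
\]
So $c$ is huge compared with $b$. Suppose, for contradiction, that $b\leq8a$. Then $c>16a^2b^2>b^4/4$.

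I would look for a contradiction from a known gap result for irregular extensions of a pair $\{a,b\}$ with $b$ close to $a$. Such a result should bound $c$ polynomially in $b$ whenever $b\leq12a$ and $d>d_+(a,b,c)$. The natural source is \cite{CipuFilipinFujitaBoundsII}, whose Lemma~3.4 already treats exactly the ranges $b<2a$ and $2a\leq b\leq12a$. Alternatively, one could rerun the argument of Proposition~\ref{prop:degree-one-m-lower} with $c\geq b^4/4$. If $m$ is small, the congruences of Lemma~\ref{lem:quadratic-expansions} become equalities. In type~II these produce an auxiliary Diophantine triple $\{a,b,D\}$ with $D<4ab$. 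That would force $D=a+b+2r$, which the size of $Q$ then rules out. If instead $m$ is large, this should contradict the linear-form bound.

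One more structural fact is available if needed. When $f<b$, the triple $\{a,f,b\}$ has largest element $b$. By Lemma~\ref{lem:basic-descent}, either $b$ is Euler over $\{a,f\}$ or $b>4af$. Together with $b\leq8a$, this forces $f\leq1$ or an Euler relation, which strongly restricts the shape of the tower.

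The main obstacle is this second case, where the first descent step does not reduce $b$. There I would first look for a citable statement of the form ``$b\leq8a$ and $d>d_+(a,b,c)$ imply $c<\kappa b^{k}$'' with $k<4$, and compare it with $c>16a^2b^2$. If no such statement is available, the fallback is the self-contained congruence argument sketched above.
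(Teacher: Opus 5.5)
Your case $e<b$ is correct and is exactly the paper's argument. The gap is the case $e>b$, which carries all the difficulty, and neither of your routes closes it.

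\textbf{The suggested reference.} \cite[Lemma~3.4]{CipuFilipinFujitaBoundsII} gives \emph{lower} bounds for $b$, namely $b>21000$ if $b<2a$ and $b>130000$ if $2a\leq b\leq12a$. It gives no upper bound for $c$. The input the paper actually uses is the regularity criterion \cite[Theorem~1.5]{FujitaMiyazaki}. For $b\leq8a$ it forces $d=d_+$ once $c$ exceeds thresholds which the paper bounds by $10b^4$.

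\textbf{The fallback.} Your self-contained argument cannot replace that criterion. A lower bound for $m$ combined with Proposition~\ref{prop:three-log-bound} yields at best an absolute upper bound for $c$, never the unconditional conclusion $b>8a$. Moreover, the type-II step of Proposition~\ref{prop:degree-one-m-lower} needs the upper bound $c<32ab^2$ to get $D<4ab$. That bound is unavailable here, since you have shown $c>b^4/4$.

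\textbf{The missing refinement.} Even with the right citation, your estimate $c>16a^2b^2\geq b^4/4$ is too weak. The paper needs $c>10b^4$ to invoke Fujita--Miyazaki, and gets it by exploiting $f$:
\begin{enumerate}[label=\textup{(\arabic*)},leftmargin=2.5em]
\item From $b\leq8a$, the very lemma of Cipu--Filipin--Fujita you cite gives $b>21000$, hence $a>500$.
\item If $f\geq40$, then $c>16a^2b^2f\geq10b^4$ directly.
\item If $f<40$, then $f<a<b$, and $\{f,a,b\}$ must be Euler. Otherwise its predecessor $g>0$ gives $b=d_+(a,f,g)>4afg\geq8a$.
\end{enumerate}
Your version of step (3), via Lemma~\ref{lem:basic-descent}, gives only $b>4af$ in the non-Euler case, so it leaves $f=1$ open; the predecessor argument closes it. The Euler relation $b=a+f+2\sqrt{af+1}$ gives $b/(a\sqrt f)<1+4/\sqrt a<5/4$. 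Hence $c>16a^2b^2f>\tfrac{256}{25}b^4>10b^4$, and \cite[Theorem~1.5]{FujitaMiyazaki} yields $d=d_+(a,b,c)$, contradicting irregularity.
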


\begin{proof}
Suppose $b\leq8a$.
By \cite[Lemma~1.3]{FilipinFujitaTogbeExamples}, $b>21000$, so $a>500$.
If $e<b$, the regular-root identities give
$b=d_+(a,e,f)>4aef\geq8a$, since $e,f$ are distinct positive integers.
Thus $e>b$, whence
\[
 e=d_+(a,b,f)>4abf,\qquad c>16a^2b^2f.
\]
If $f\geq40$, this yields $c>10b^4$.

Suppose $f<40$, so $f<a$.
If $\{f,a,b\}$ were not Euler, its positive predecessor $g$ would give
$b=d_+(a,f,g)>4afg\geq8a$, again a contradiction.
Hence $b=a+f+2\sqrt{af+1}$, and
\[
 \frac{b}{a\sqrt f}
 =\frac1{\sqrt f}+\frac{\sqrt f}{a}
   +2\sqrt{\frac1a+\frac1{a^2f}}
 <1+\frac4{\sqrt a}<\frac54.
\]
It follows that
\[
 c>16a^2b^2f>\frac{256}{25}b^4>10b^4.
\]
For $b\leq8a$, both thresholds in \cite[Theorem~1.5]{FujitaMiyazaki}
are below $10b^4$, so $d=d_+$,
contrary to irregularity.
\end{proof}

We use $b>8a$ throughout the rest of this section.
The type-II constant index bounds also apply. Indeed, $b>4000$, and
the regular-root formula with $e\geq1$ gives
\[
 c=a+b+e+2abe+2\sqrt{(ab+1)(ae+1)(be+1)}
 >4abe+a+b+e>4ab+a+b.
\]
Hence Lemma~\ref{lem:constant-index-bounds} gives $m,n\geq4$ for type II.

\subsection{Type I}

Consider an irregular common solution
$z^{(a,c)}_{2m}=z^{(b,c)}_{2n}$ with $z_1=z_2=\lambda\in\{\pm1\}$.
Lemmas~\ref{lem:constant-index-bounds} and \ref{lem:mn-comparison} give
$m\geq2$ and $1\leq n\leq m\leq2n$.
Put $\kappa=(\sqrt{17}-1)/2$, so $\kappa^2+\kappa=4$.

\begin{lemma}\label{lem:higher-type-I-first-lower}
The common indices satisfy $m>n$ and
\begin{equation}\label{eq:higher-type-I-gap}
 n>\kappa\sqrt{c/b}.
\end{equation}
\end{lemma}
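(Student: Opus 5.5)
The plan is to assume $n\leq\kappa\sqrt{c/b}$ and derive a contradiction from the even congruence \eqref{eq:z-even-general-c2}, following the type-I argument in the proof of Proposition~\ref{prop:degree-one-m-lower}. Separately, I will show that $m=n$ cannot occur. For type I we have $x_1=y_2=1$ and $z_1=z_2=\lambda$. Lemma~\ref{lem:quadratic-expansions}, applied to both sequences and divided by $2c$, gives
\[
 am^2+\lambda sm\equiv bn^2+\lambda tn\pmod{4c}.
\]

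\emph{Step 1: the congruence is an equality.} Both sides are positive, since $s<am$ and $t<bn$ fail only for tiny indices, and $m\geq2$, $n\geq1$ handle these directly. Under the assumption $n\leq\kappa\sqrt{c/b}$, and using $t^2=bc+1$, we get
\[
 bn^2+tn\leq\kappa^2c+\kappa c\sqrt{1+1/(bc)}\approx(\kappa^2+\kappa)c=4c.
\]
This is exactly why $\kappa^2+\kappa=4$ is chosen. I expect to need a small slack argument at the boundary: either a strict inequality from $t<\sqrt{bc}+1/(2\sqrt{bc})$, or the integrality of $n$.

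On the other side, $m\leq2n$ and $b>8a$ (Lemma~\ref{lem:higher-b8a}) give $am^2\leq4an^2<\tfrac12\kappa^2c$. Similarly $sm\leq2\kappa\sqrt{a/b}\,c<\kappa c/\sqrt2$. Hence both sides lie in $[0,4c)$, and the congruence becomes the integer equation
\[
 am^2+\lambda sm=bn^2+\lambda tn .
\]

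\emph{Step 2: exploiting the equation.} Put $D_0=bn^2-am^2=\lambda(sm-tn)$. Squaring $sm-tn$ and using $s^2=ac+1$, $t^2=bc+1$ gives, as in Section~\ref{sec:degree-one},
\[
 m^2-n^2=D_0\bigl(c+\lambda(tn+sm)\bigr).
\]
Suppose first that $m=n$. Then $(b-a)n=\lambda(s-t)$, so $\lambda=-1$ and $t-s=(b-a)n$. Combined with $(t-s)(t+s)=(b-a)c$, this gives $n=c/(s+t)$. I would then return to the full congruence modulo $8c^2$ (not just modulo $4c$), or recompute $z$ directly with this $n$. The goal is to show that the resulting $d$ equals $d_\pm$, contradicting irregularity via Lemma~\ref{lem:irregular-above-regular}.

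\emph{Step 3: the case $m>n$.} Now $D_0\neq0$. Since $m^2-n^2>0$ and $|m^2-n^2|<4n^2\leq4\kappa^2c/b$, we need $|c+\lambda(tn+sm)|<4\kappa^2c/b$, which is very small. If $\lambda=1$ this is impossible. If $\lambda=-1$, then $tn+sm$ must lie within $O(c/b)$ of $c$. Combining this with $D_0=sm-tn$ being a nonzero integer and $bn^2-am^2=D_0$, I would pin down $n$ and $m$ in terms of $\sqrt{c/b}$ and $\sqrt{c/a}$ and then derive a contradiction from $n\leq\kappa\sqrt{c/b}$. The expected outcome is that $tn+sm\approx c$ forces $n$ close to $\sqrt{c/b}$ only when $m$ is near $n$, while $D_0\approx bn^2$ conflicts with $D_0(c-tn-sm)=m^2-n^2$.

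I expect this near-cancellation case $\lambda=-1$, together with the $m=n$ case, to be the main obstacle. There I would first try using the higher-degree structure $c=d_+(a,b,e)>4abe$, together with explicit expressions for $s$ and $t$, to control the size of $c-tn-sm$.
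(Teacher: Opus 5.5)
\emph{There is a genuine gap: Steps 2 and 3 are never completed, and each needs a specific idea you have not supplied.}

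\textbf{Step 1 and the case $m>n$.} Step 1 is fine in substance, but the positivity claim is wrong. The inequalities $am>s$ and $bn>t$ need $m>s/a\approx\sqrt{c/a}$ and $n>t/b\approx\sqrt{c/b}$, which is exactly the range at issue, so for $\lambda=-1$ the two sides can be negative. Bound the difference instead. One has $bn^2-am^2>0$, and $tn-sm>0$ because $t^2-4s^2=c(b-4a)-3>0$ and $m\le 2n$. Hence its absolute value is at most $bn^2+tn-1<bn^2+n\sqrt{bc}\le 4c$, using $t=\sqrt{bc}+\gamma^{-1}$ and $n<\gamma$.

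In Step 3 the near-cancellation you worry about is not the real issue. Since $m>n$, the integer $c+\lambda(tn+sm)$ in $m^2-n^2=D_0\bigl(c+\lambda(tn+sm)\bigr)$ is nonzero. Hence $|D_0|\le m^2-n^2\le 3n^2$. But $m\le 2n$ and $b>8a$ give $D_0=bn^2-am^2>4an^2\ge 4n^2$, a contradiction. You used $b>8a$ and $m\le 2n$ in Step 1, but in Step 3 you only used $|D_0|\ge 1$, which discards exactly the information needed. This argument excludes equality whatever the size of $n$. The paper orders things that way: equality is impossible, so the nonzero difference has absolute value at least $4c$, which forces $n>\kappa\sqrt{c/b}$.

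\textbf{The case $m=n$.} There are two problems here.
\begin{itemize}
\item The lemma asserts $m>n$ unconditionally. Your treatment of $m=n$ only arises inside the equality case, that is, under the extra hypothesis $n\le\kappa\sqrt{c/b}$.
\item Your proposed congruence route does not close. In that case $\lambda=-1$, $(b-a)n=t-s$ and $n(s+t)=c$. The congruence modulo $8c^2$ then holds identically. Even the cubic expansion modulo $16c^3$ from Lemma~\ref{lem:higher-type-I-lower} reduces to $24\mid(n^2-1)(b-a)$, which is no contradiction.
\end{itemize}
What is needed is an archimedean comparison of the two sequences. Put $V_j=z^{(a,c)}_{2j}$ and $W_j=z^{(b,c)}_{2j}$. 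Then $W_1>V_1>0$. For $\lambda=-1$ this reads $W_1-V_1=2c(b-a)\bigl(c/(s+t)-1\bigr)>0$, which holds because $s+t<c$ follows from $c>4ab$. The difference $D_j=W_j-V_j$ satisfies $D_0=0$ and
\[
D_{j+2}=(4bc+2)D_{j+1}-D_j+4c(b-a)V_{j+1}.
\]
So $D_j$ is increasing and $W_j>V_j$ for every $j\ge 1$. This is how the paper rules out $m=n$. It is also what guarantees that the factor $c+\lambda(tn+sm)$ is nonzero in the argument for $m>n$.
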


\begin{proof}
This is the congruence argument of \cite[Lemma~16]{HeTogbeZiegler}
in the present normalization.
Write $V_j=z^{(a,c)}_{2j}$ and $W_j=z^{(b,c)}_{2j}$.
Their recurrence coefficients are $4ac+2$ and $4bc+2$, with
\[
 V_0=W_0=\lambda,\qquad
 V_1=\lambda+2c(s+\lambda a),\qquad
 W_1=\lambda+2c(t+\lambda b).
\]
Applying the even congruence in Lemma~\ref{lem:quadratic-expansions}
to $V_m=W_n$ gives
\begin{equation}\label{eq:higher-type-I-cong-4c}
 am^2+\lambda sm\equiv bn^2+\lambda tn\pmod{4c}.
\end{equation}

Since $c>4ab\geq4b$, we have $s+t<c$.
Thus $W_1>V_1>0$. For $\lambda=-1$, use
$W_1-V_1=2c(b-a)(c/(s+t)-1)>0$.
The $V_j$ are increasing for $j\geq1$.
For $D_j=W_j-V_j$, we have $D_0=0$, $D_1>0$, and
\[
 D_{j+2}=(4bc+2)D_{j+1}-D_j+4c(b-a)V_{j+1}.
\]
Induction gives $D_{j+1}>D_j$ for $j\geq1$, hence $W_j>V_j$
for every $j\geq1$. This excludes $m=n$.

Equality in \eqref{eq:higher-type-I-cong-4c} would give, by
$s^2=ac+1$ and $t^2=bc+1$,
\[
 m^2-n^2=(bn^2-am^2)\bigl(c+\lambda(tn+sm)\bigr).
\]
The second factor is a nonzero integer, so
$bn^2-am^2\leq m^2-n^2$. But $m\leq2n$ and $b>8a$ give
$bn^2-am^2>4an^2\geq4n^2$, whereas $m^2-n^2\leq3n^2$.
Thus the congruence has a nonzero difference.
Moreover, $bn^2-am^2>0$ and $tn-sm>0$, the latter because
$t^2-4s^2=c(b-4a)-3>0$. Therefore
\[
 4c\leq\bigl|bn^2-am^2+\lambda(tn-sm)\bigr|
 \leq bn^2+tn-1.
\]
If $n\leq\kappa\sqrt{c/b}$, then $b\geq9>\kappa$ gives
$n<\gamma$, and hence
$tn-1=n\sqrt{bc}+n/\gamma-1<n\sqrt{bc}$.
This implies $4c<bn^2+n\sqrt{bc}\leq(\kappa^2+\kappa)c=4c$,
a contradiction.
\end{proof}

\begin{lemma}\label{lem:higher-type-I-lower}
Every type-I irregular extension of a triple of degree
at least two satisfies
\[
 m>\kappa c^{1/12}>1.56c^{1/12}.
\]
\end{lemma}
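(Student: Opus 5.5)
The plan is to combine the gap estimate \eqref{eq:higher-type-I-gap} of Lemma~\ref{lem:higher-type-I-first-lower} with a separate argument for the range where $c/b$ is small. Lemma~\ref{lem:higher-type-I-first-lower} already gives $m>n>\kappa\sqrt{c/b}$. So whenever $b\le c^{5/6}$ we get $\sqrt{c/b}\ge c^{1/12}$, hence $m>\kappa c^{1/12}$. The numerical bound $\kappa>1.56$ is immediate from $\kappa=(\sqrt{17}-1)/2$. It therefore remains to treat $b>c^{5/6}$, that is, $c<b^{6/5}$.

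In that range I would first pin down the shape of the descent. By \eqref{eq:higher-regular-predecessor}, $c>4abe$. If $e>b$, this would give $c>4ab^2>b^2$, contradicting $c<b^{6/5}$. Hence $e<b$, and the second descent step gives $b=d_+(a,e,f)>4aef$ with $f=d_-(\partial T)>0$. Moreover
\[
 4ae<c/b<b^{1/5}<c^{1/6},
\]
so $a$ and $e$ are tiny compared with $b$ and $c$. Using \eqref{eq:dpm-squares} for the triple $\{a,b,e\}$, write $s$ and $t$ explicitly: with $ae+1=r_1^2$ and $be+1=t_1^2$, we have $s=rr_1+at_1$ and $t=rt_1+br_1$. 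Also $c=a+b+e+2abe+2rr_1t_1$.

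The core of the small-$c/b$ case is a sharper look at the congruence \eqref{eq:higher-type-I-cong-4c}. Write it as
\[
 bn^2-am^2+\lambda(tn-sm)=4ck
\]
with $k\ge1$. Suppose $m\le\kappa c^{1/12}$. Then $n\le m$ and $am^2$ is negligible. Since $t\approx\sqrt{bc}$ and $c\approx 4abe$, the right-hand side of $bn^2+\lambda tn\approx 4ck$ forces $n\approx(\sqrt{4k}-\lambda)\sqrt{c/b}/2\cdot(\text{const})$. In particular $n\asymp\sqrt{ae}\sqrt{k}$ is small, so the displayed identity becomes an exact equation among quantities of size far below $c$.

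Next I would substitute the explicit expressions for $s,t,c$ and reduce modulo $b$, or modulo a suitable combination such as $4ae\,b$. This should turn the identity into an exact polynomial relation in $a,e,r_1,t_1,m,n,k$. The aim is a relation that can only hold when the triple is Euler-like, in the same spirit as the contradictions $D=a+b+2r$ in the degree-one type-II argument and $m^2-n^2=(bn^2-am^2)(c+\lambda(tn+sm))$ above. Failing that, I would combine it with the logarithmic comparison $m\log\beta\approx n\log\gamma$, as in Lemma~\ref{lem:euler-index-order}. Since $\log\gamma-\log\beta\approx\tfrac12\log(b/a)$ is large, this gives $m-n\gg n\log(b/a)/\log c$. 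That is incompatible with the near-equality $bn^2\approx am^2+4ck$ when $m$ is small.

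I expect the main obstacle to be exactly this range $b>c^{5/6}$, where $c/b\approx 4ae$ is bounded. There the gap bound \eqref{eq:higher-type-I-gap} is useless, and one needs the full arithmetic of $s,t$ coming from the regular quadruple $\{a,b,e,c\}$ to rule out small $m$. The first thing I would try is to eliminate the irrational parts by squaring $4ck-bn^2+am^2=\lambda(tn-sm)$. Using $s^2=ac+1$, $t^2=bc+1$ and $st=ar\,?\,$-type expansions from \eqref{eq:dpm-squares}, I would then extract an integer equation whose size is $O(c^{1/2+\epsilon})$ but which must be divisible by $c$, and so forced to vanish.
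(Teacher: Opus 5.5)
Your treatment of the range $b\le c^{5/6}$ matches the paper's: Lemma~\ref{lem:higher-type-I-first-lower} gives $m>n>\kappa\sqrt{c/b}\ge\kappa c^{1/12}$. The gap is the complementary range $c<b^{6/5}$. There you give a programme rather than a proof, and the concrete steps you name do not close it.

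\begin{itemize}
\item Writing \eqref{eq:higher-type-I-cong-4c} as $bn^2-am^2+\lambda(tn-sm)=4ck$ only defines $k$. With $m\le\kappa c^{1/12}$ and $b>c^{5/6}$, $|k|$ can be of order $c^{1/6}$. So no ``exact equation among small quantities'' arises.
\item Squaring and reducing modulo $c$ gives $(bn^2-am^2)^2\equiv m^2+n^2-2stmn\pmod c$. But $bn^2-am^2>(b-4a)n^2>b/2$, so the left side exceeds $b^2/4>c^{5/3}/4$. Also $st\equiv\pm\sqrt{ce+1}\pmod c$ is not small. Nothing is forced to vanish, and the asserted $O(c^{1/2+\epsilon})$ size has no basis.
\item The logarithmic fallback gives only $m/n\approx\log\gamma/\log\beta$. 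This lies between $1$ and $2$, consistent with $n\le m\le 2n$. Since $k$ is a free integer, $bn^2\approx am^2+4ck$ yields no contradiction.
\end{itemize}

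Two smaller points. In this range the middle inequality $b^{1/5}<c^{1/6}$ of your chain is false, though $c/b<c^{1/6}$ holds directly. Your facts $e<b$ and $b=d_+(a,e,f)$ are correct but unnecessary. The only input the range provides is $a<c/(4b)\le c^{1/6}/4$.

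The missing idea is a second congruence involving the same $k$. Use the paper's normalization $bn^2+\lambda tn-am^2-\lambda sm=kc$.

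\begin{itemize}
\item The paper expands $z^{(a,c)}_{2j}$ one order further, modulo $16c^3$, via the integer-valued cubic $F_a(j)=\lambda+2c(\lambda aj^2+sj)+\frac23ac^2(j^3-j)(2s+\lambda aj)$, and likewise for $(b,c)$.
\item Subtracting the two expansions gives $A\equiv3\lambda k\pmod{24c}$, where $A=a(m^3-m)(2s+\lambda am)-b(n^3-n)(2t+\lambda bn)$.
\item Put $P=am+\lambda s$ and $Q=bn+\lambda t$. Squaring $mP\equiv nQ\pmod c$ and using $s^2\equiv t^2\equiv1\pmod c$ removes the large terms. The result is $3kn^2\equiv(m^2-n^2)(a^2m^2+2\lambda asm+1-n^2)\pmod c$.
\item If $2asm^3\le c$, both sides are smaller than $c$ in absolute value, so equality holds. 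Comparing sizes then forces $c^{1/2}<|k|<3c^{1/3}$, which is impossible.
\item Hence $2asm^3>c$. Together with $s^2<\frac{17}{16}ac$ and $a<c^{1/6}/4$, this gives $m^3>\frac{16}{\sqrt{17}}c^{1/4}>\kappa^3c^{1/4}$.
\end{itemize}

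Without a higher-order relation of this kind tying $k$ to $m$ and $n$, the small-$c/b$ case remains open in your proposal.
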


\begin{proof}
If $c>b^{6/5}$, Lemma~\ref{lem:higher-type-I-first-lower} gives
$m>n>\kappa\sqrt{c/b}>\kappa c^{1/12}$.
Assume now that $c\leq b^{6/5}$.
Since $c>4ab$, we have $1\leq a<c^{1/6}/4$, so necessarily $c>4096$.
We first prove $2asm^3>c$.

Put $bn^2+\lambda tn-am^2-\lambda sm=kc$, with $k\in\mathbf Z$.
For the next term in the even recurrence, put
\[
 F_a(j)=\lambda+2c(\lambda aj^2+sj)
   +\frac23ac^2(j^3-j)(2s+\lambda aj).
\]
This integer-valued polynomial agrees with $z^{(a,c)}_{2j}$ at $j=0,1$, and
\[
 F_a(j+2)-(4ac+2)F_a(j+1)+F_a(j)
 =-\frac83a^2c^3j(j+1)(j+2)\bigl(2s+\lambda a(j+1)\bigr).
\]
Since $6\mid j(j+1)(j+2)$, induction yields
$z^{(a,c)}_{2j}\equiv F_a(j)\pmod{16c^3}$.
Use the analogous expansion for $(b,c)$ and put
\[
 A=a(m^3-m)(2s+\lambda am)-b(n^3-n)(2t+\lambda bn).
\]
As $3\mid A$, subtraction gives
$16c^3\mid2c^2(A/3-\lambda k)$, hence
\[
 A\equiv3\lambda k\pmod{24c}.
\]
Put $P=am+\lambda s$ and $Q=bn+\lambda t$.
Squaring $mP\equiv nQ\pmod c$ gives
$m^2P^2\equiv n^2Q^2\pmod c$. Since $s^2\equiv t^2\equiv1\pmod c$,
\begin{align*}
 \lambda A
 &=(m^2-1)(P^2-s^2)-(n^2-1)(Q^2-t^2)\\
 &\equiv Q^2+n^2-P^2-m^2\pmod c.
\end{align*}
Together with $\lambda A\equiv3k\pmod c$, this gives
\[
 P^2+m^2\equiv Q^2+n^2-3k\pmod c.
\]
Multiply by $n^2$ and use $n^2Q^2\equiv m^2P^2$ to obtain
\begin{equation}\label{eq:higher-type-I-key-cong}
 3kn^2\equiv R\pmod c,\qquad
 R=(m^2-n^2)(a^2m^2+2\lambda asm+1-n^2).
\end{equation}

Assume $2asm^3\leq c$. Since $s>\sqrt{ac}$,
$m^3<c^{1/2}/(2a^{3/2})$ and $m<c^{1/6}$.
In particular,
\[
 a^2m^4<\frac25c^{2/3},\qquad m^4<\frac25c^{2/3},\qquad
 m^2<\frac23c^{1/3}.
\]
Using $0<m^2-n^2\leq3m^2/4$ and $a,b,s,t<c$, we get
\begin{align*}
 |R|&<\frac34c+\frac35c^{2/3}+\frac12c^{1/3},\\
 3|k|n^2&<6m^4+6m^3<\frac{12}{5}c^{2/3}+3c^{1/2}.
\end{align*}
Thus
\[
 |R-3kn^2|<\frac34c+3c^{2/3}+3c^{1/2}+\frac12c^{1/3}
 <\frac{505}{512}c<c,
\]
where $c>4096$. The congruence is therefore the equality $R=3kn^2$.
Furthermore,
\[
 \frac{a^2m^2+1+n^2}{asm}
 \leq\frac{am}{s}+\frac1{asm}+\frac{m}{as}
 <\frac12c^{-1/4}+c^{-1/2}+c^{-1/3}
 <\frac9{64}<\frac12.
\]
It follows that $|a^2m^2+2\lambda asm+1-n^2|>3asm/2$.
Together with $m^2-n^2>2n$, the equality and the defining formula for $k$ give
\[
 c^{1/2}<as<\frac{asm}{n}<|k|
 <2m^2+2m\leq3m^2<3c^{1/3}.
\]
This would imply $c<729$, contrary to $c>4096$.
Hence $2asm^3>c$.

Finally $ac>4096$ gives $s^2=ac+1<(17/16)ac$.
Together with $a<c^{1/6}/4$, this yields
\[
 c<2asm^3<\frac{\sqrt{17}}{16}c^{3/4}m^3,
 \qquad
 m^3>\frac{16}{\sqrt{17}}c^{1/4}>\kappa^3c^{1/4}.
\]
The last comparison uses $\kappa^3=(5\sqrt{17}-13)/2<16/\sqrt{17}$,
equivalently $53<13\sqrt{17}$. Thus $m>\kappa c^{1/12}$ also in this range.
\end{proof}

\subsection{Type II}

We have $m,n\geq4$ and $n\leq m\leq2n+1$ by
Lemmas~\ref{lem:constant-index-bounds} and \ref{lem:mn-comparison}.
Applying the odd congruence in Lemma~\ref{lem:quadratic-expansions}
with $x_1=y_2=r$ and
$(z_1,z_2)=(\lambda t,\lambda s)$ gives
\begin{equation}\label{eq:higher-type-II-first}
 bn(n+1)-am(m+1)\equiv\lambda rst\Delta\pmod c,
 \qquad\lambda\in\{\pm1\}.
\end{equation}
Put
\[
 N=2n(n+1)-\lambda\Delta,\qquad
 M=2m(m+1)+\lambda\Delta,\qquad L=\lambda\Delta.
\]
Since $2rst\equiv a+b-e\pmod c$, we obtain
\begin{equation}\label{eq:higher-predecessor-cong}
 c\mid H,\qquad H=bN-aM+eL.
\end{equation}
The index bounds give
\[
 0<M<3m^2,\qquad 0<N\leq\frac52m^2,\qquad \Delta\leq\frac{m^2}{4}.
\]
Indeed,
$N\leq2n^2+n+m\leq2m^2+2m$ and $m\geq4$.

\begin{lemma}\label{lem:higher-bridge}
Suppose that $c>b^2$ and $m\leq c^{1/12}$. Then
$e>b$, $e=d_+(a,b,f)$, $H=0$, $\lambda=-1$,
$\Delta\geq1$, $M<4N$ and $N<2m^2$.
Moreover, if $X=c/(ab^2)$, then
\begin{equation}\label{eq:higher-ebX}
 X>16af,\qquad \frac{2X}{9}<\frac eb<\frac X4,
\end{equation}
and
\begin{equation}\label{eq:higher-XDelta}
 e\Delta=bN-aM,\qquad 2N<X\Delta<\frac92N.
\end{equation}
\end{lemma}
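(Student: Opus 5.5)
The plan is to show first that $H=0$ and then read everything else off the exact identity $H=0$. Throughout I use $b<c^{1/2}$ from $c>b^2$, and the bounds $0<M<3m^2$, $0<N\le\frac52m^2$, $\Delta\le m^2/4$, with $m^2\le c^{1/6}$. I also use $b>8a$ and $b>4000$ from Lemma~\ref{lem:higher-b8a}.

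\emph{Excluding $e<b$.} Suppose $e<b$. The inverse-root identity gives $b=d_+(a,e,f)>4aef\ge4ae$, so $e<b$ and every term of $H$ is $O(b\,m^2)=O(c^{2/3})$. Hence $|H|<c$, and $c\mid H$ forces $H=0$, that is, $bN=aM-eL$. For the lower bound on $N$, use $m\le 2n+1$ (Lemma~\ref{lem:mn-comparison}), so $\Delta\le n+1$ and $N\ge 2n^2+n-1\gtrsim m^2/2$. On the other side, $aM+e|L|<3am^2+e m^2/4$, and $a<b/8$, $e<b/(4a)$ make this smaller than $b\cdot m^2/2$. This contradiction gives $e>b$, and then the inverse-root identity at the second descent step gives $e=d_+(a,b,f)$.

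\emph{Showing $H=0$ when $e>b$.} Now $c=d_+(a,b,e)$, and Lemma~\ref{lem:basic-descent} gives $4abe+e<c<4abe+4e$. Write $c=4abe+\rho$ with $e<\rho<4e$, and $H=kc$. The terms $bN$ and $aM$ are still $O(c^{2/3})$, but $eL$ need not be small against $c$, so this step is the main obstacle. Dividing $kc=bN-aM+eL$ by $e$ gives
\[
 L-4abk=\frac{k\rho+aM-bN}{e}.
\]
If $k\ne0$, this forces $|\Delta|\ge 4ab|k|-4|k|-o(1)$, so $\Delta\gtrsim 4ab$. Against $\Delta\le m^2/4\le c^{1/6}/4$ this only yields $c\gtrsim(16ab)^6$, which is not a contradiction by itself when $e$ is huge. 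To exclude it I would first try to use the integrality hidden in $H=kc$, for instance reducing modulo $ab$ or $4ab$ to pin $L$ to a specific residue. If that fails, I would bring in the next-order odd expansion, in analogy with the cubic term $F_a$ in Lemma~\ref{lem:higher-type-I-lower}, to get a second congruence modulo $c$ that is incompatible with $\Delta\approx4abk$ and $m\le c^{1/12}$. Once $k=0$, we have $eL=aM-bN$.

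\emph{Consequences of $H=0$.} From the first part, $bN>aM$, so $eL<0$. Since $\Delta=m-n\ge0$ by Lemma~\ref{lem:mn-comparison}, this gives $\lambda=-1$ and $\Delta\ge1$, hence $e\Delta=bN-aM$. With $\lambda=-1$ we have $N=2n(n+1)+\Delta$ and $M=2m(m+1)-\Delta$. Now $n\ge(m-1)/2$ gives $M<4N$, and $n\le m-\Delta$ gives $N<2m^2$.

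For \eqref{eq:higher-ebX}, set $X=c/(ab^2)$. The inequalities $4abe+e<c<4abe+4e$ become $e/b<X/4$ and $e/b>X/(4+4/(ab))>2X/9$. Also $e=d_+(a,b,f)>4abf$, so $c>4abe>16a^2b^2f$, i.e.\ $X>16af$.

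For \eqref{eq:higher-XDelta}, combine $e\Delta=bN-aM$ with these bounds. From $e\Delta<bN$ and $e>2Xb/9$ we get $X\Delta<\frac92N$. From $M<4N$ and $b>8a$ we get $aM<bN/2$, so $e\Delta>bN/2$; with $e<Xb/4$ this gives $X\Delta>2N$.
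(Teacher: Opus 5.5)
Your proposal has a genuine gap at the step you call the main obstacle: you never show $H=0$ when $e>b$. Everything that follows depends on it: the sign $\lambda=-1$, $\Delta\geq1$, the identity $e\Delta=bN-aM$, and \eqref{eq:higher-XDelta}. Reducing $H=kc$ modulo $ab$, or using a next-order odd expansion, is only suggested, not carried out. Nothing in the proposal excludes the configuration your own computation isolates. If $k\neq0$, then $e\Delta$ must be of the same order as $c$, so $\Delta$ is of order $c/e\approx4ab$. The bound $\Delta\leq m\leq c^{1/12}$ then only says that $c$ exceeds roughly $(4ab)^{12}>b^{12}$, and you give no argument against $b$ being that small compared with $c$.

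The missing ingredient is an a priori bound on $c$ in terms of $b$ for irregular extensions. The paper invokes \cite[Theorem~1.4\textup{(4)}]{CipuFujitaMiyazaki}, which gives $c<200b^4$. Since $c=d_+(a,b,e)>4abe$ whichever of $b,e$ is larger, one gets $e<c/(4ab)\leq c/(4b)<c^{3/4}$, using $b>(c/200)^{1/4}$ and $200^{1/4}<4$. Together with $\Delta\leq m\leq c^{1/12}$ this gives $|eL|<c^{5/6}$, while $bN+aM<6bm^2<6c^{2/3}$. As $c>b^2>4000^2>12^6$, it follows that $|H|<c$, hence $H=0$. This needs no case distinction on $e$ versus $b$, and it shows why your scenario $\Delta\approx4ab$ cannot occur.

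With this inserted, the rest of your argument is sound. Obtaining $\lambda=-1$ and $\Delta\geq1$ from $bN>aM$ is a tidy variant of the paper's argument; that inequality needs only $b>8a$ and the index bounds, not the hypothesis $e<b$. Your derivations of $M<4N$, $N<2m^2$, \eqref{eq:higher-ebX} and \eqref{eq:higher-XDelta} agree with the paper's.

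For excluding $e<b$, the paper's route is shorter: $c<4b(ae+1)$ and $c>b^2$ give $b<4ae+4$, whereas $b=d_+(a,e,f)>4ae+4$. Your congruence route also works, but the crude bounds $M<3m^2$ and $\Delta\leq m^2/4$ narrowly fail at $n=4$, $m=9$. Use $\Delta\leq n+1$ and $M\leq2m(m+1)+\Delta$ there instead.
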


\begin{proof}
We first show that $e>b$.
If $e<b$, the inverse-root identity gives $b=d_+(a,e,f)$, so
$b>4aef+a+e+f>4ae+4$.
On the other hand, $c=d_+(a,b,e)<4b(ae+1)$ and $c>b^2$ give
$b<4ae+4$, a contradiction.
Thus $e>b$ and $e=d_+(a,b,f)$.
Consequently $c>4abe>16a^2b^2f$, giving $X>16af$.
Also $4abe<c<4e(ab+1)$, so
$Xab/[4(ab+1)]<e/b<X/4$.
Since $ab\geq8$, this proves \eqref{eq:higher-ebX}.

By \cite[Theorem~1.4\textup{(4)}]{CipuFujitaMiyazaki}, an irregular
extension satisfies $c<200b^4$.
Using $a<b<c^{1/2}$ and $e<c/(4ab)$, we get
\[
 |H|<6bm^2+\frac{cm}{4b}
 <6c^{2/3}+c^{5/6}<c,
\]
since $200^{1/4}<4$ and $c>b^2>4000^2>12^6$.
Hence $H=0$ by \eqref{eq:higher-predecessor-cong}.
If $\Delta=0$, this equality becomes $(b-a)N=0$, which is impossible.
For $\lambda=1$, substitution of $m=n+\Delta$, with
$\Delta\leq n+1$, gives $8N-M\geq8n^2-5n-13>0$.
Thus $bN=aM-e\Delta<aM<bN$, a contradiction.
Therefore $\lambda=-1$.
The same substitution gives $4N-M\geq n+1>0$, and hence
\[
 \frac12bN<e\Delta=bN-aM<bN.
\]
Combining this with \eqref{eq:higher-ebX} proves
\eqref{eq:higher-XDelta}.
Finally, $n\leq m-1$ gives
$N=2n^2+n+m\leq2m^2-2m+1<2m^2$.
\end{proof}

\begin{proposition}\label{prop:higher-type-II-lower}
Every higher-degree type-II irregular extension satisfies
\[
 m>\frac1{\sqrt3}\,c^{1/16}.
\]
\end{proposition}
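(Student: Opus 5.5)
We argue by contradiction and assume $m\leq c^{1/16}/\sqrt3$. In particular $m\leq c^{1/12}$. The plan is to split according to whether $c>b^2$, so that Lemma~\ref{lem:higher-bridge} becomes available in the main case. In each case we use the divisibility $c\mid H$ from \eqref{eq:higher-predecessor-cong}, together with the size bounds $0<M<3m^2$, $0<N\leq\frac52m^2$ and $\Delta\leq m^2/4$. The aim is to turn $c\mid H$ into an exact equation and then force $c$ to be bounded by a fixed power of $m$ that contradicts the assumption.

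\emph{Case $c\leq b^2$.} Here $e<c/(4ab)\leq b/(4a)<b$, so $b=d_+(a,e,f)>4aef$ and $c=d_+(a,b,e)$ differs from $4abe$ by a quantity of order $b\sqrt{ae}$. I would multiply $H$ by $4ae$ and reduce modulo $c$. Replacing $4abe$ by $c$ minus this correction turns $4aeH\equiv0\pmod c$ into a congruence whose left side is of order $m^2b\sqrt{ae}+m^2a^2e+m^2ae^2$. Using $b\leq c^{1/2}$, $ae<b/4$ and $m\leq c^{1/16}$, this is smaller than $c$, so the congruence is an equality. The resulting equation has terms of visibly different sizes, namely $bN$ against $aM$ and $eL$ rescaled by $4ae$, and it should be impossible for $m\geq4$. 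I expect a routine but fiddly bookkeeping here. If this direct route fails, I would instead use \cite[Theorem~1.4]{CipuFujitaMiyazaki} to restrict the range of $c/b$.

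\emph{Case $c>b^2$.} Lemma~\ref{lem:higher-bridge} gives $e=d_+(a,b,f)$, $\lambda=-1$, $\Delta\geq1$, $e\Delta=bN-aM$ and $2N<X\Delta<\frac92N$ with $X=c/(ab^2)$. First, $4abf<e\leq e\Delta<bN<2bm^2$ gives $af<m^2/2$. Next, reduce $e\Delta=bN-aM$ modulo $b$. Since $e=d_+(a,b,f)\equiv a+f+2\rho\pmod b$, where $\rho^2\equiv(af+1)\pmod b$ comes from $r\sqrt{af+1}$, we get
\[
\Delta(a+f+2\rho)\equiv-aM\pmod b.
\]
Both sides are $O(m^4)$ after choosing the residue of $\rho$ as $r\sqrt{af+1}-b\sqrt{af+1}\cdot(\ldots)$ small. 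So either this is an equality, which one shows is impossible by comparing signs and sizes, or it yields $b\ll m^4$. Combining $b\ll m^4$ with $a<m^2$ and $X<\frac92N/\Delta<9m^2$, we obtain $c=Xab^2\ll m^{2+2+8}$. With the constants tracked, this should give $c<3^8m^{16}$, contradicting the assumption.

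The main obstacle is this last reduction modulo $b$. One must choose a representative of $e\bmod b$ that is genuinely small, of size $O(\sqrt{ab}+f)$ rather than $O(b)$, and exclude the exact-equality alternative. The exponent $1/16$ and the constant $1/\sqrt3$ suggest that the intended estimate is of the form $c<9m^2\cdot a\cdot b^2$ with $a\ll m^2$ and $b\ll m^6$ or similar, so I would first aim for the crude bound $b\leq 3^{3}m^6$ and adjust constants afterwards.
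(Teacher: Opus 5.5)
Both cases of your plan have genuine gaps.

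\textbf{Case $c\le b^2$.} Here you use $b\le c^{1/2}$, but in this case the inequality runs the other way: $b\ge c^{1/2}$. In fact $4ae+1<c/b<4ae+4$, so $c$ can be a bounded multiple of $b$; for example $\{1,23408,326040\}$ has $a=1$, $e=3$, $c<16b$.

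Multiplying $H$ by $4ae$ gains nothing. Put $\theta=c-4abe\in(b,4b)$. Then $4aeH\equiv-\theta N-4a^2eM+4ae^2L\pmod c$. The main term $\theta N$ is still of size $bm^2$, and the $aM$ and $eL$ terms are inflated. Once $m^2$ is large compared with $ae$ this exceeds $c$, so the congruence is not an equality and your "equation with visibly different sizes" is never reached.

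Your fallback does not help either. The paper uses \cite[Theorem~1.4]{CipuFujitaMiyazaki} only for the upper bound $c<200b^4$, which says nothing when $c/b$ is bounded.

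The paper argues differently. From $a<b/8$ and $e<b/4$ it gets $H\ge bN-aM-e\Delta>0$, hence $H\ge c$. This contradicts $H<\frac{41}{16}bm^2$ when $c\ge b^{5/4}$. The range $c<b^{5/4}$ needs a further idea:
\begin{itemize}
\item With $P=aM-eL\equiv bN\pmod c$ and the identity $cf=(b-a-e)^2-4(ae+1)$, one gets $(P-(a+e)N)^2\equiv4(ae+1)N^2\pmod c$. This is an equality by size.
\item That equality gives $P=E_\mu N$ and an integer $J=(bN-P)/c$.
\item Reducing $c=a+b+e+2abe+2ruv$ modulo $b$ gives a second quadratic congruence. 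It is again an equality, and it forces $P=0$, which leads to a contradiction.
\end{itemize}

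\textbf{Case $c>b^2$.} Reducing $e\Delta=bN-aM$ modulo $b$ needs a small representative of $\rho=r\sqrt{bf+1}\sqrt{af+1}$, and in general there is none. Modulo $b$ you only know $\rho\equiv\epsilon\sqrt{af+1}$, where $\epsilon=r\sqrt{bf+1}$ satisfies $\epsilon^2\equiv1$; nothing forces $\epsilon\equiv\pm1$.

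To eliminate $\epsilon$ you must square. That gives $(aM+(a+f)\Delta)^2\equiv4(af+1)\Delta^2\pmod b$, with both sides up to about $X^2N^2$. This yields only $b\ll X^2N^2$, hence $c=Xab^2\ll N^{10}\ll m^{20}$, which is too weak for the exponent $1/16$. Your $b\ll m^4$ is unsupported, and your closing remark already retreats to $b\ll m^6$.

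The missing idea is to work modulo $e$ instead of $b$:
\begin{itemize}
\item From $e\Delta=bN-aM$ you get $bN\equiv aM\pmod e$.
\item With $g=d_-(a,b,f)$, the identity $eg=(b-a-f)^2-4(af+1)$ gives $R^2\equiv Q^2\pmod e$, where $R=(a+f)N-aM$ and $Q=2\sqrt{af+1}\,N$.
\item Both $R$ and $Q$ are $O(XN)$, because $X>16af$ and $M<4N$.
\end{itemize}
The modulus $e>2Xb/9$ exceeds $b$ by a factor of $X$. So if $R^2\ne Q^2$ you get $b<XN^2$, and then $c<X^4N^4/16<3^8m^{16}$.

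The equality case is not impossible, contrary to your expectation. When $R^2=Q^2$, the paper uses $e+g=(4af+2)b+2(a+f)$ and $eg=(b-F_\mu)(b-F_{-\mu})$ to obtain $b<XN^2/2$, and the same bound on $c$ follows.
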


\begin{proof}
Suppose that $m\leq c^{1/16}/\sqrt3$.
First assume $c>b^2$. Lemma~\ref{lem:higher-bridge} applies.
Put $w=\sqrt{af+1}$, $R=(a+f)N-aM$ and $Q=2wN$.
The regular-root product identity and $bN\equiv aM\pmod e$ give
\begin{equation}\label{eq:higher-second-cong}
 R^2\equiv Q^2\pmod e.
\end{equation}
Since $af\geq3$ and $X>16af$,
\[
 a+f\leq af+1\leq\frac{4af}{3}<\frac{X}{12},\qquad
 2w\leq\frac{4af}{3}<\frac{X}{12}.
\]
Thus $|R|<XN/3$ and $Q<XN/12$.
If $R^2\ne Q^2$, then
\[
 \frac{2Xb}{9}<e\leq|R^2-Q^2|
 <\frac{25}{144}X^2N^2,
\]
so $b<\frac{25}{32}XN^2<XN^2$.

If $R^2=Q^2$, put $F_\pm=a+f\pm2w$.
The pair condition gives $|a-f|\geq2$, hence
$F_-F_+=(a-f)^2-4\geq0$ and $F_-\geq0$.
For some $\mu\in\{\pm1\}$ we have
$aM=F_\mu N$ and $e\Delta=(b-F_\mu)N$.
Let $g=d_-(a,b,f)$. The identities
\begin{align*}
 e+g&=(4af+2)b+2(a+f),\\
 eg&=(b-F_\mu)(b-F_{-\mu})
\end{align*}
yield
\[
 b\bigl(N^2-(4af+2)N\Delta+\Delta^2\bigr)
 =F_\mu N^2+2(a+f)N\Delta+F_{-\mu}\Delta^2.
\]
The right side is positive, so the integer factor on the left is at least
one. Since $\Delta<N$ and $F_\pm<X/6$, this gives
$b<XN^2/2<XN^2$.
In either case, $a<X/16$, $X<\frac92N$ and $N<2m^2$ imply
\[
 c=Xab^2<\frac1{16}X^4N^4
 <\frac{6561}{256}N^8<3^8m^{16}\leq c,
\]
a contradiction.

Now let $c\leq b^2$. Then $e<b/(4a)\leq b/4$.
Substituting $m=n+\Delta$ and $0\leq\Delta\leq n+1$ gives, for either sign,
$8N-M-2\Delta\geq8n^2-7n-15>0$.
Therefore
\[
 H\geq bN-aM-e\Delta>b\left(N-\frac M8-\frac\Delta4\right)>0,
\]
so $H\geq c$ by \eqref{eq:higher-predecessor-cong}.
If $c\geq b^{5/4}$, then
\[
 c\leq H<bN+e\Delta\leq\frac{41}{16}bm^2
 \leq\frac{41}{48}c^{37/40}<c,
\]
a contradiction.

It remains to consider $c<b^{5/4}$.
Put $S=a+e$, $u=\sqrt{ae+1}$ and $P=aM-eL$.
We have $ae<c^{1/5}/4$, so
$S\leq u^2\leq4ae/3<c^{1/5}/3$.
Since $P\equiv bN\pmod c$ and
$cf=(b-a-e)^2-4(ae+1)$,
\begin{equation}\label{eq:higher-small-square-c}
 (P-SN)^2\equiv4u^2N^2\pmod c.
\end{equation}
Both sides are less than $c$, because
\[
 |P-SN|<6Sm^2<\frac23c^{13/40},\qquad
 4u^2N^2<\frac{25}{3}c^{1/5}m^4
 \leq\frac{25}{27}c^{9/20}.
\]
Thus \eqref{eq:higher-small-square-c} is an equality. Put
$E_\pm=S\pm2u$. As above, $E_-\geq0$, so for some
$\mu\in\{\pm1\}$ we have
\[
 P=E_\mu N\geq0.
\]
Moreover, $E_+\leq2S<3b/4$, and hence
\[
 J=\frac{bN-P}{c}=\frac{(b-E_\mu)N}{c}\in\mathbf Z,
 \qquad 0<J<\frac N4.
\]
On the other hand, with $v=\sqrt{be+1}$, the regular-root formula
$c=a+b+e+2abe+2ruv$ gives
\begin{equation}\label{eq:higher-small-square-b}
 (-P-JS)^2\equiv4J^2u^2\pmod b.
\end{equation}
Again this is an equality. Indeed, since $b>c^{4/5}$, the bounds
\[
 P+JS<\frac94SN<\frac34c^{13/40},\qquad
 4J^2u^2<\frac14u^2N^2<\frac1{12}c^{9/20}
\]
show that both squared sides are smaller than $b$.
Consequently, for some $\nu\in\{\pm1\}$,
\[
 -P=JE_\nu\geq0.
\]
It follows that $P=0$. Hence $E_-=0$, $|a-e|=2$, and $aM=eL$
forces $\lambda=1$ and $\Delta>0$. But then
\[
 \frac ea=\frac M\Delta>2m\geq8,\qquad
 \frac ea\leq3,
\]
a contradiction.
\end{proof}

\begin{proposition}\label{prop:higher-index-lower}
Every irregular extension of a triple of degree at least two satisfies
\begin{equation}\label{eq:higher-index-lower}
 m>\frac1{\sqrt3}\,c^{1/16}.
\end{equation}
\end{proposition}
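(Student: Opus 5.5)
This proposition combines the two solution types of Lemma~\ref{lem:solution-types}; no new arithmetic is needed. Let $\{a,b,c,d\}$ be an irregular quadruple with $\deg\{a,b,c\}\geq2$. By Lemma~\ref{lem:irregular-above-regular} we have $d>d_+(a,b,c)$. Lemma~\ref{lem:solution-types} then says the common term $z$ is either of type~I or of type~II. I treat the two types separately and compare exponents of $c$.

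\emph{Type II.} This is exactly Proposition~\ref{prop:higher-type-II-lower}, which gives \eqref{eq:higher-index-lower} directly. Its standing hypotheses have already been checked after Lemma~\ref{lem:higher-b8a}: $b>8a$, $b>4000$, and $c>4ab+a+b$, so $m,n\geq4$.

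\emph{Type I.} Lemma~\ref{lem:higher-type-I-lower} gives $m>\kappa c^{1/12}$ with $\kappa>1.56$. It remains to show $\kappa c^{1/12}\geq c^{1/16}/\sqrt3$, which is immediate. Since $c\geq1$, we have $c^{1/12}\geq c^{1/16}$, and $\kappa>1>1/\sqrt3$. Hence $m>\kappa c^{1/12}>c^{1/16}/\sqrt3$.

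The only point that needs care is that both lemmas use the same index $m$. In type~I the index of the $(a,c)$-sequence is $2m$; in type~II it is $2m+1$. Lemma~\ref{lem:solution-types} fixes $m$ in both cases, so the bound concerns a single well-defined quantity for every irregular extension. I expect no real obstacle here, since all the difficulty lies in Lemma~\ref{lem:higher-type-I-lower} and Proposition~\ref{prop:higher-type-II-lower}.
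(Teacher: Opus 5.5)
Your proposal is correct and matches the paper's proof. Type~II is Proposition~\ref{prop:higher-type-II-lower} verbatim, and type~I follows from Lemma~\ref{lem:higher-type-I-lower} via $\kappa c^{1/12}>c^{1/16}/\sqrt3$ for $c\geq1$.
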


\begin{proof}
For type I, Lemma~\ref{lem:higher-type-I-lower} gives
$m>\kappa c^{1/12}>c^{1/16}/\sqrt3$.
For type II, use Proposition~\ref{prop:higher-type-II-lower}.
\end{proof}

\section{A common three-logarithm estimate}\label{sec:linear-form}

We record the form of Matveev's theorem used below.

For an algebraic number $\eta$ of degree $D$ with primitive minimal
polynomial $a_0\prod_{j=1}^D(X-\eta^{(j)})\in\mathbf Z[X]$, $a_0>0$,
its absolute logarithmic height is
\[
 h(\eta)=\frac1D\left(\log a_0+
 \sum_{j=1}^D\log\max\{1,|\eta^{(j)}|\}\right).
\]

\begin{theorem}[Matveev {\cite[Theorem~2.2]{Matveev}}]\label{thm:matveev}
Let $K\subset\mathbf R$ have degree $d_K$, and let
\[
 \Lambda=b_1\log\eta_1+b_2\log\eta_2+b_3\log\eta_3\ne0,
\]
where
$\eta_i>0$ are multiplicatively independent algebraic numbers in $K$,
$b_i\in\mathbf Z$, and $b_3\ne0$. Choose
$A_i\geq\max\{d_Kh(\eta_i),|\log\eta_i|,0.16\}$ for $1\leq i\leq3$, and put
\[
 B=\max\left\{1,\frac{|b_1|A_1}{A_3},\frac{|b_2|A_2}{A_3},|b_3|\right\},
 \qquad\mathcal A=A_1A_2A_3.
\]
Then
\begin{equation}\label{eq:matveev-original}
 \log|\Lambda|>-C(3)C_0W_0d_K^2\mathcal A,
\end{equation}
where
\begin{align*}
 C(3)&=\frac{16}{3!}\mathrm e^3\cdot9\cdot5\cdot16^4
 \cdot\frac{3\mathrm e}{2}<6.441\cdot10^8,\\
 C_0&=\log\!\left(\mathrm e^{20.2}3^{5.5}d_K^2\log(\mathrm e d_K)\right),\\
 W_0&=\log\!\left(1.5\mathrm e Bd_K\log(\mathrm e d_K)\right).
\end{align*}
\end{theorem}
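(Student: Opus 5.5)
The statement is Matveev's theorem, so I would not reprove it. The plan is to specialise \cite[Theorem~2.2]{Matveev} to $n=3$ and a real field $K$, and to check that the constants in the form stated here are implied by his.

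First I would recall Matveev's general result for a nonzero linear form $\Lambda=\sum_{i=1}^n b_i\log\eta_i$ in a field $K$ of degree $d_K$, with $\kappa=1$ in the real case. His heights satisfy $A_i\geq\max\{d_Kh(\eta_i),|\log\eta_i|,0.16\}$. For real $\eta_i>0$ the principal real logarithm is used, so the middle term is just $|\log\eta_i|$. His parameter $B$ is exactly the maximum written above, with $b_n\neq0$ singled out as $b_3$. His lower bound is $\log|\Lambda|>-C(n)C_0W_0d_K^2\mathcal A$. The constant $C(n)$ for the real case is
\[
 \frac{16}{n!}\,\mathrm e^n\,(2n+1+2\kappa)\,(n+2)\,\bigl(4(n+1)\bigr)^{n+1}\,\frac{\mathrm e n}{2},
\]
and $C_0$, $W_0$ are as in the statement with $n=3$ inserted, namely $\mathrm e^{4.4n+7}n^{5.5}$ inside $C_0$.

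The steps, in order, are these:
\begin{enumerate}[label=\textup{(\arabic*)}]
\item Put $n=3$, so $16/3!$, $\mathrm e^3$, $9=2\cdot3+1+2\kappa$, $5=n+2$, $16^4=(4\cdot4)^4$ and $3\mathrm e/2$ appear. Also $\mathrm e^{4.4\cdot3+7}=\mathrm e^{20.2}$ and $3^{5.5}$. This reproduces the displayed $C(3)$, $C_0$ and $W_0$.
\item Check that the hypotheses match. Multiplicative independence of the $\eta_i$ is assumed explicitly. The conditions $\Lambda\neq0$ and $b_3\neq0$ are those required by Matveev.
\item Verify the numerical inequality $C(3)<6.441\cdot10^8$ with an explicit interval computation:
\[
 \tfrac{16}{6}\mathrm e^3\approx53.5614,\qquad 53.5614\cdot45=2410.26,
\]
\[
 2410.26\cdot65536\approx1.57957\cdot10^8,\qquad 1.57957\cdot10^8\cdot\tfrac{3\mathrm e}{2}\approx1.57957\cdot10^8\cdot4.07742\approx6.4406\cdot10^8.
\]
\end{enumerate}

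I expect step (3) to be the only point needing care, because the margin is only about $10^{-4}$ relative. I would carry out the product with rigorous upper bounds for $\mathrm e$ and $\mathrm e^4$ (for instance $\mathrm e<2.7182819$) to certify $C(3)<6.441\cdot10^8$.

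A secondary subtlety is matching the definition of $B$ and the $A_i$ to Matveev's normalisation, where $b_3$ plays the role of his distinguished $b_n$. If his $B$ is defined differently, I would note that the version stated here has a $B$ at least as large as his. Since $W_0$ is increasing in $B$, the stated bound then follows a fortiori.
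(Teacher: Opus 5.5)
Your proposal is correct and matches the paper, which gives no proof of this statement and simply quotes Matveev's theorem; specialising his real-field constants to $n=3$ reproduces the displayed $C(3)$, $C_0$, $W_0$ and $B$, as you describe. The only computation needed is the numerical bound, and indeed $C(3)=180\cdot16^4\mathrm e^4\approx6.44066\cdot10^8<6.441\cdot10^8$, consistent with your interval check.
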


Put
\begin{equation}\label{eq:linear-prefactor}
 \delta=\frac{\sqrt b\,(x_1\sqrt c+z_1\sqrt a)}
 {\sqrt a\,(y_2\sqrt c+z_2\sqrt b)}
 =\frac ba\frac{x_1\sqrt{ac}+az_1}{y_2\sqrt{bc}+bz_2}.
\end{equation}
The common Pell representations give the following upper estimate.

\begin{lemma}[{\cite[Lemma~5]{DujellaFiniteness}}]
\label{lem:linear-form-upper}
If \(m',n'>0\), then
\begin{equation}\label{eq:linear-form-upper}
 0<\Lambda:=
 m'\log\beta-n'\log\gamma+\log\delta
 <\frac83ac\,\beta^{-2m'}.
\end{equation}
\end{lemma}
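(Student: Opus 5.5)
We start from the common Pell representations. With $z=z^{(a,c)}_{m'}=z^{(b,c)}_{n'}$, the definitions \eqref{eq:pell-sequence-ac} and \eqref{eq:pell-sequence-bc} give
\[
 z\sqrt a+x\sqrt c=(z_1\sqrt a+x_1\sqrt c)\beta^{m'},\qquad
 z\sqrt b+y\sqrt c=(z_2\sqrt b+y_2\sqrt c)\gamma^{n'}.
\]
Their conjugates come from $\beta^{-1}=s-\sqrt{ac}$ and $\gamma^{-1}=t-\sqrt{bc}$. We introduce
\[
 P=\frac{x\sqrt c+z\sqrt a}{\sqrt a},\qquad Q=\frac{y\sqrt c+z\sqrt b}{\sqrt b},
\]
which, up to the factors $\sqrt a,\sqrt b$, are the ``large'' quantities in these representations. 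Using $az^2-cx^2=a-c$ and $bz^2-cy^2=b-c$, we get
\[
 P^{-1}\text{-type identities: }\quad x\sqrt c-z\sqrt a=\frac{(c-a)}{x\sqrt c+z\sqrt a},
\]
and similarly for $y$. Solving the two representations together with their conjugates expresses $z\sqrt a$ and $z\sqrt b$ in closed form. This gives
\[
 P=\frac{x_1\sqrt c+z_1\sqrt a}{\sqrt a}\beta^{m'},\qquad
 Q=\frac{y_2\sqrt c+z_2\sqrt b}{\sqrt b}\gamma^{n'}
\]
modulo the conjugate terms. Concretely, we write $2z\sqrt a=(z_1\sqrt a+x_1\sqrt c)\beta^{m'}+(z_1\sqrt a-x_1\sqrt c)\beta^{-m'}$ and the analogous formula with $\sqrt b$.

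Next we compare the two expressions for $z$. Eliminating $z$ gives the identity
\[
 \frac{\sqrt b}{\sqrt a}\Bigl((z_1\sqrt a+x_1\sqrt c)\beta^{m'}-\text{small}\Bigr)
 =(z_2\sqrt b+y_2\sqrt c)\gamma^{n'}-\text{small}'.
\]
Equivalently, following Dujella, we set
\[
 p=\frac{\sqrt b\,(x_1\sqrt c+z_1\sqrt a)}{\sqrt a}\beta^{m'},\qquad
 q=(y_2\sqrt c+z_2\sqrt b)\gamma^{n'}.
\]
The identity then reads
\[
 p+\frac{b(c-a)}{a}p^{-1}\cdot\frac{?}{}=q+(c-b)q^{-1},
\]
with the exact constants fixed by the norm relations. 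The key structural fact is
\[
 p-q=(c-b)q^{-1}-\tfrac{b}{a}(c-a)p^{-1}.
\]

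To show positivity of $\Lambda=\log(p/q)$, we observe that $b(c-a)/a>c-b$. If $p\le q$, the right-hand side would force $p-q>0$, which is a contradiction. Hence $p>q$ and $\Lambda>0$. For the upper bound we use $p-q<(c-b)q^{-1}<c\,q^{-1}$. Then $\Lambda=\log(1+(p-q)/q)<(p-q)/q<c\,q^{-2}$, and it remains to bound $q^{-2}$ by a multiple of $\beta^{-2m'}$. Since $q<p$ and $p\ge\sqrt{b/a}\,(\sqrt c\,|x_1|-\sqrt a|z_1|)\beta^{m'}$, it suffices to show that $(x_1\sqrt c+z_1\sqrt a)^2 b/a$ is bounded below by an absolute fraction of $b/a$ (or of $1$).

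The main obstacle is this last constant, namely getting exactly $\tfrac83ac$. In type I we have $x_1=1$ and $z_1=\pm1$, so $x_1\sqrt c+z_1\sqrt a\ge\sqrt c-\sqrt a$, and $(\sqrt c-\sqrt a)^2\ge c(1-\sqrt{a/c})^2$ is bounded below by roughly $3c/8$-type factors. In type II we have $x_1=r$, which only helps. Combining these gives $\Lambda<c\,q^{-2}\le c\cdot\frac{a}{b}\cdot\frac{8}{3c}\beta^{-2m'}\cdot(\dots)$, and we expect that care with $q$ versus $p$ (using $q>p/2$, say) lands on the stated bound $\tfrac83ac\,\beta^{-2m'}$.
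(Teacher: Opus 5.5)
Your framework is the standard argument behind Dujella's Lemma~5, which the paper cites without proof. Your $p,q$ are $\sqrt b$ times $P=(x\sqrt c+z\sqrt a)/\sqrt a$ and $Q=(y\sqrt c+z\sqrt b)/\sqrt b$, and $\Lambda=\log(p/q)$. But your central identity has the wrong sign, and this breaks both halves of the argument. Since $(z_1\sqrt a-x_1\sqrt c)(z_1\sqrt a+x_1\sqrt c)=a-c<0$, your formula for $2z\sqrt a$ gives
\[
 2z\sqrt b=p-\frac{b(c-a)}{a}\,p^{-1}=q-(c-b)\,q^{-1},
 \qquad
 p-q=\frac{b(c-a)}{a}\,p^{-1}-(c-b)\,q^{-1}.
\]
This is the negative of your ``key structural fact''. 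With your sign, assuming $p\le q$ makes the right-hand side negative, so no contradiction arises; positivity follows only from the corrected identity.

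Your upper bound $p-q<(c-b)q^{-1}$ is false. In fact $p-q\approx c(b-a)a^{-1}p^{-1}$. Already for $\{1,3,8,120\}$ (type~I, $z_1=z_2=-1$, $m'=n'=2$) one finds $p-q\approx0.149$ but $(c-b)q^{-1}\approx0.047$. The usable bound is $p-q<\frac{b(c-a)}{a}p^{-1}$. You should then write $\Lambda=-\log\bigl(1-\frac{p-q}{p}\bigr)$, which needs only a lower bound for $p$. Your step ``since $q<p$ and $p\ge\dots$'' goes in the wrong direction for bounding $q^{-2}$, and $q>p/2$ is asserted, not proved.

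You also misplace where the constant comes from. In type~II you say $x_1=r$ ``only helps''. But for $\lambda=-1$ the prefactor $r\sqrt c-t\sqrt a=(c-a)/(r\sqrt c+t\sqrt a)$ is only about $\sqrt{c/(4ab)}$, far smaller than $\sqrt c-\sqrt a$. The leading term of $\Lambda$ is then
\[
 \frac{c(b-a)}{b\,(r\sqrt c-t\sqrt a)^2}\,\beta^{-2m'}
 =\frac{c(b-a)\,(r\sqrt c+t\sqrt a)^2}{b\,(c-a)^2}\,\beta^{-2m'}
 \approx4a(b-a)\,\beta^{-2m'}.
\]
This case is what forces a factor of size $ac$ in the statement; type~I gives an absolute constant, because $c>4a$. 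The type~II term stays below $\frac83ac\,\beta^{-2m'}$, with room for the logarithmic correction, only because $c$ is large compared with $b$, for instance $c>4ab$ when the triple has positive degree. For Euler triples, where $c$ can be close to $b$, this comparison can fail. That is why the type~I normalization of Lemma~\ref{lem:euler-type-I} is the one to use there. Your final paragraph does none of this; it only expresses the expectation that $\frac83ac$ will come out.
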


\begin{proposition}\label{prop:three-log-bound}
Every irregular extension $d>d_+(a,b,c)$ satisfies
\begin{equation}\label{eq:three-log-bound}
 m'<2+4.436\cdot10^{13}(\log c)^2\log\!\bigl(39(m'+1)\bigr).
\end{equation}
\end{proposition}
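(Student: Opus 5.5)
We apply Theorem~\ref{thm:matveev} to the linear form $\Lambda=m'\log\beta-n'\log\gamma+\log\delta$ of Lemma~\ref{lem:linear-form-upper}, and compare the result with the upper bound \eqref{eq:linear-form-upper}.

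\textbf{Setting up Matveev.} By Lemma~\ref{lem:irregular-above-regular} we have $d>d_+(a,b,c)$, and Lemma~\ref{lem:constant-index-bounds} gives $m'>2$. Lemma~\ref{lem:mn-comparison} gives $n'\geq m'/2-1/2>0$. Hence both indices are positive and $0<\Lambda<\frac83ac\,\beta^{-2m'}$. Work in $K=\mathbf Q(\sqrt{ac},\sqrt{bc})$, so $d_K\leq4$. Take $\eta_1=\beta$, $\eta_2=\gamma$, $\eta_3=\delta$, with $b_1=m'$, $b_2=-n'$, $b_3=1$. We need $\delta$ to lie in $K$; indeed $\sqrt{ac}$ and $\sqrt{bc}$ lie in $K$, so by \eqref{eq:linear-prefactor} it does.

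If $\beta,\gamma,\delta$ are multiplicatively dependent, we fall back on the two-logarithm bound, or note that $\Lambda\neq0$ and derive a relation that contradicts the structure of the units. I expect the standard argument to show that $\beta$ and $\gamma$ are independent, since their norms to $\mathbf Q(\sqrt{ac})$ and $\mathbf Q(\sqrt{bc})$ separate them. In the dependent case, Lemma~\ref{lem:linear-form-upper} with a rational multiple relation would give an even stronger bound.

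\textbf{Heights.} We have $h(\beta)=\frac12\log\beta$ and $h(\gamma)=\frac12\log\gamma$, so with $d_K=4$ we may take $A_1=2\log\beta$ and $A_2=2\log\gamma$. Since $\gamma<2\sqrt{bc}+1<2c$, both are at most $2\log(2c)$.

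For $\delta$, we bound the height by computing its conjugates and leading coefficient. In type~I, $x_1=y_2=1$ and $z_i=\lambda$. In type~II, $x_1=y_2=r$, $z_1=\lambda t$ and $z_2=\lambda s$. In both cases the numerator and denominator are quadratic-type integers of size $O(c^{3/2})$, and $\delta$ is a root of a polynomial of degree at most $4$ whose leading coefficient divides something like $a^2(\ldots)$. This should give $4h(\delta)\leq 4\log(c)+O(1)$; we take $A_3$ of order $4\log c$.

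\textbf{Size of $B$.} Because $n'\leq m'+1$ and $A_1,A_2\asymp A_3$, we get $B\leq(m'+1)\cdot$const, roughly $B\leq m'+1$ times a small factor. Plugging into $W_0=\log(1.5e\,B\cdot4\log(4e))$ gives $W_0\leq\log(39(m'+1))$; this is where the constant $39$ comes from, since $1.5e\cdot4\log(4e)\approx38.9$.

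\textbf{Combining the bounds.} Comparing with $\log\Lambda<\log(\frac83ac)-2m'\log\beta$ and dividing by $2\log\beta$ gives
\[
m'<\frac{\log(\frac83ac)}{2\log\beta}+\frac{C(3)C_0\cdot16\cdot A_1A_2A_3}{2\log\beta}\log\bigl(39(m'+1)\bigr).
\]
Here $A_1/(2\log\beta)=1$, and $\log(ac)/\log\beta<2$ since $\beta^2>4ac$, which accounts for the additive $2$. For the main term, $A_2A_3\leq 8(\log c)^2$ up to small corrections; using $\log\gamma<\log(2c)$, the term is absorbed into a slightly larger constant because $c$ is large.

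\textbf{Main obstacle.} Numerically, $C(3)C_0\cdot16$ times the height constants must come out to at most $4.436\cdot10^{13}$. The step I expect to be delicate is the height bound for $\delta$ that is uniform in both types, so that $A_3\approx4\log c$ with no extra additive term that would spoil the stated constant. Next in difficulty is absorbing the $\log 2$ corrections in $A_2$ using $c$ large, which holds since $b>4000$.
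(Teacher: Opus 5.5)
Your route is the paper's: Matveev's theorem for $(\eta_1,\eta_2,\eta_3)=(\beta,\gamma,\delta)$ and $(b_1,b_2,b_3)=(m',-n',1)$, with $A_1=2\log\beta$, $A_2=2\log\gamma$, $B\le m'+1$ so that $W_0<\log(39(m'+1))$, and then division by $2\log\beta$. Your field $\mathbf Q(\sqrt{ac},\sqrt{bc})$ is the paper's $\mathbf Q(\sqrt{ab},\sqrt{ac})$. However, two inputs to Matveev's theorem are left as expectations, so the proposal has gaps.

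\emph{Independence.} You never prove that $\beta,\gamma,\delta$ are multiplicatively independent. Falling back on a two-logarithm estimate would need a different theorem with different constants, so it does not give the stated inequality. The missing step is a norm computation. One has $N_{K/\mathbf Q}(\beta)=N_{K/\mathbf Q}(\gamma)=1$, while $N_{K/\mathbf Q}(\delta)=\bigl(b(c-a)/(a(c-b))\bigr)^2>1$, because $b(c-a)-a(c-b)=c(b-a)>0$. Hence $\beta^u\gamma^v\delta^w=1$ forces $w=0$. The automorphism of $K$ fixing $\sqrt{ac}$ and negating $\sqrt{bc}$ fixes $\beta$ and inverts $\gamma$, and this then forces $u=v=0$. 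You also need $[K:\mathbf Q]=4$ exactly, which holds because none of $ab,ac,bc$ is a square; you use $d_K=4$ in $A_1$, $A_2$ and $W_0$.

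\emph{Height of $\delta$.} You never actually bound $h(\delta)$, and the value you expect, $4h(\delta)\le4\log c+O(1)$, cannot hold uniformly in type~II. Take $a=1$, $b=k^2-1$, $c=k^2+2k$ (so $r=k$, $s=k+1$, $t=k^2+k-1$) and $\lambda=1$. The conjugates of $\delta$ are approximately $k$, $2k^4$, $k^2/2$ and $1/(4k)$; the large one arises because $|bs-r\sqrt{bc}|=b(c-b)/(bs+r\sqrt{bc})\approx1$. The denominator of $N_{K/\mathbf Q}(\delta)$ is at least $(2k+1)^2/441$. So $4h(\delta)\ge9\log k-O(1)$, whereas $4\log c=8\log k+o(1)$.

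Fortunately no sharp estimate is needed, and the ``main obstacle'' you single out is not one. In both types $x_1,y_2,|z_1|,|z_2|<c$. So the numerator and denominator of $(x_1\sqrt{ac}+az_1)/(y_2\sqrt{bc}+bz_2)$ are quadratic algebraic integers with all conjugates below $2c^2$. Therefore $h(\delta)\le\log b+2\log(2c^2)<6\log c$, and $\max\{4h(\delta),|\log\delta|\}<24\log c$. The paper even takes $A_3=36\log c$, and the constant in the statement is calibrated to that choice. Indeed $C(3)C_0d_K^2<3.080\cdot10^{11}$ and $A_1A_2A_3=144\log\beta\log\gamma\log c$, with $3.080\cdot10^{11}\cdot144<4.436\cdot10^{13}$. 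After dividing by $2\log\beta$, one uses $\log(8ac/3)<4\log\beta$ and $\log\gamma<2\log c$, with no largeness of $c$ needed. With such an $A_3$ one also has $A_1/A_3,A_2/A_3<1/6$, so $B\le m'+1$ is immediate.
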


\begin{proof}
Set $K=\mathbf Q(\sqrt{ab},\sqrt{ac})$. None of $ab,ac,bc$ is a square,
since each lies strictly between consecutive integer squares.
Thus $ab$ and $ac$ represent distinct nontrivial square classes, so
$[K:\mathbf Q]=4$ and $\beta,\gamma,\delta\in K$.
The numerator and denominator in \eqref{eq:linear-prefactor} are positive,
since $cx_1^2-az_1^2=c-a>0$ and $cy_2^2-bz_2^2=c-b>0$.
The units $\beta,\gamma$ are independent, since an automorphism of $K$ inverts
one and fixes the other. Moreover,
\[
 N_{K/\mathbf Q}(\beta)=N_{K/\mathbf Q}(\gamma)=1,\qquad
 N_{K/\mathbf Q}(\delta)=\left(\frac{b(c-a)}{a(c-b)}\right)^2>1.
\]
Taking norms in $\beta^u\gamma^v\delta^w=1$ forces $w=0$, and hence
$u=v=0$. This proves the independence required by Matveev's theorem.

The normalized initial coordinates are $(x_1,y_2)=(1,1)$ or $(r,r)$,
and $(|z_1|,|z_2|)=(1,1)$ or $(t,s)$. All are less than $c$.
Every conjugate of the numerator and denominator in the second quotient
of \eqref{eq:linear-prefactor} has absolute value less than $2c^2$.
Since $c>b>4000$ by Lemma~\ref{lem:higher-b-absolute}, the height inequalities give
\[
 h(\beta)=\tfrac12\log\beta,\qquad
 h(\gamma)=\tfrac12\log\gamma,\qquad
 h(\delta)<5\log c+2\log2<6\log c.
\]
In particular $|\log\delta|\leq4h(\delta)<36\log c$.
Apply Matveev's theorem with
\[
 (\eta_1,\eta_2,\eta_3)=(\beta,\gamma,\delta),\qquad
 (b_1,b_2,b_3)=(m',-n',1).
\]
We may take
\[
 A_1=2\log\beta,\qquad A_2=2\log\gamma,\qquad A_3=36\log c.
\]
Since $\beta<\gamma<3c<c^2$ and $n'\leq m'+1$, we have
$B\leq\max\{1,(m'+1)/9\}\leq m'+1$. For $d_K=4$,
\[
 C_0<29.885,\qquad C(3)C_0d_K^2<3.080\cdot10^{11},\qquad
 W_0<\log\!\bigl(39(m'+1)\bigr).
\]
As $A_1A_2A_3=144\log\beta\log\gamma\log c$, Matveev's theorem and
Lemma~\ref{lem:linear-form-upper} yield
\begin{align*}
 -4.436\cdot10^{13}\log\beta\log\gamma\log c\,
 \log\!\bigl(39(m'+1)\bigr)
 &<\log\Lambda\\
 &<\log(8ac/3)-2m'\log\beta.
\end{align*}
Finally, $\log(8ac/3)<4\log\beta$ and $\log\gamma<2\log c$
give \eqref{eq:three-log-bound}.
\end{proof}

We now combine \eqref{eq:three-log-bound} with the index lower bounds
for the three degree classes.

\begin{proposition}[Euler triples]\label{prop:euler-upper}
If an Euler triple admits an irregular extension, then
\[
 c<10^{98}.
\]
\end{proposition}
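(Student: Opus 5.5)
We argue by contradiction: suppose an Euler triple $\{a,b,c\}$ with $c\geq10^{98}$ admits an irregular extension. For Euler triples, Lemma~\ref{lem:euler-type-I} puts every common solution in type I, so $m'=2m$. Proposition~\ref{prop:three-log-bound} then becomes an upper bound for $m$ in terms of $\log c$:
\[
 2m<2+4.436\cdot10^{13}(\log c)^2\log\bigl(39(2m+1)\bigr).
\]
In the other direction, Proposition~\ref{prop:euler-m5} applies because $c\geq10^{98}>10^7$, and it gives $m^5>c\log c/32$. Hence $m>(c\log c/32)^{1/5}$, which grows like a power of $c$. The upper bound grows only like $(\log c)^2\log m$. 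Comparing the two forces $c$ to be bounded, and the proof is just to extract the number $10^{98}$.

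First, I rewrite the upper bound as
\[
 m<1+2.218\cdot10^{13}(\log c)^2\log(117m).
\]
Here $39(2m+1)\leq117m$, which holds since $m\geq1$. Put $L=\log c\geq98\log10\approx225.6$. The function $m/\log(117m)$ is increasing for $m\geq1$. Also $m\geq m_0:=(cL/32)^{1/5}$, so
\[
 \frac{m_0}{\log(117m_0)}\leq\frac{m}{\log(117m)}<2.219\cdot10^{13}L^2.
\]
Here the additive $1$ is absorbed, since $m$ is huge.

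Next, I check that this fails at $L=98\log10$. There $m_0=\exp\bigl((L+\log L-\log32)/5\bigr)$. We have $\log L\approx5.42$ and $\log32\approx3.47$, so $\log m_0\approx(225.65+1.95)/5\approx45.5$, and $m_0\approx e^{45.5}\approx5.7\cdot10^{19}$. Then $\log(117m_0)\approx50.3$, so the left side is about $1.1\cdot10^{18}$. The right side is $2.219\cdot10^{13}\cdot(225.6)^2\approx1.13\cdot10^{18}$.

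This is uncomfortably close, so the real work is careful bookkeeping of constants rather than new ideas. I would track the constants precisely. That means replacing $\log(39(m'+1))$ with $m'=2m$ exactly, and possibly using $\log\beta$ rather than the crude $\log c$ factors. If the margin at exactly $10^{98}$ still fails, I would go back to the proof of Proposition~\ref{prop:three-log-bound} and redo the final step for Euler triples. There $\beta\leq\gamma$, and $\log\gamma\leq\log(2c)$ is sharper than $2\log c$, which roughly halves the constant.

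Finally, the left side minus the right side, viewed as a function of $L$, grows exponentially in $L/5$ while the right side is polynomial in $L$. So it suffices to verify the inequality at the endpoint $L=98\log10$ and then check monotonicity, for instance via the derivative in $L$. This gives $c<10^{98}$.

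The main obstacle is this tight numerical verification near the threshold. The structural input, namely the combination of Proposition~\ref{prop:euler-m5} with Proposition~\ref{prop:three-log-bound}, is immediate.
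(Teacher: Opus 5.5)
Your reduction is the paper's. Both arguments take $m'=2m$ from Lemma~\ref{lem:euler-type-I}, combine $m^5>c\log c/32$ from Proposition~\ref{prop:euler-m5} with \eqref{eq:three-log-bound}, and finish with a one-variable numerical check. The only difference is which variable is eliminated. The paper writes $m'^5=32m^5>c\log c$, which gives $m'>1.176\cdot10^{20}$ and $\log c<5\log m'-\log(98\log10)$; substituting the latter into \eqref{eq:three-log-bound} forces $m'<1.15\cdot10^{20}$. You instead eliminate $m$ using the monotonicity of $m/\log(117m)$ and compare in $L=\log c$. Either works.

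The gap is the decisive numerical step, which is essentially the whole content of this proposition and which you never complete. At face value your reported values ($\approx1.1\cdot10^{18}$ on the left against $\approx1.13\cdot10^{18}$ on the right) give no contradiction, and you fall back on a conditional plan to reopen Proposition~\ref{prop:three-log-bound}. The discrepancy comes from rounding.

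At $L=98\log10\approx225.653$ one has $\log m_0=(L+\log L-\log32)/5\approx45.521$. Hence $m_0\approx5.88\cdot10^{19}$, not $5.7\cdot10^{19}$; note that $2m_0$ is exactly the paper's $1.176\cdot10^{20}$. Then $\log(117m_0)\approx50.28$, so the left side exceeds $1.169\cdot10^{18}$, while $2.219\cdot10^{13}L^2<1.13\cdot10^{18}$. The required inequality therefore fails by roughly $3.5\%$, a margin comparable to the paper's ($1.176$ against $1.15$).

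So your argument closes with the constant $4.436\cdot10^{13}$ exactly as stated, and no sharpening via $\log\gamma<\log(2c)$ is needed. At this margin, though, loose rounding decides the outcome. You must therefore state the endpoint values as rigorous one-sided bounds, for instance $m_0>5.88\cdot10^{19}$ and $2.219\cdot10^{13}L^2<1.13\cdot10^{18}$. Your monotonicity step in $L$ is fine: the logarithmic derivative of the left side is about $1/5$, while that of the right side is $2/L<0.01$.
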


\begin{proof}
Suppose that $c\geq10^{98}$, and use the reindexed representations in
Lemma~\ref{lem:euler-type-I}, so that $m'=2m$.
Proposition~\ref{prop:euler-m5} gives
\[
 m'^5>c\log c,\qquad m'>1.176\cdot10^{20}.
\]
In particular, $\log c<5\log m'-\log(98\log10)$.
Combining this with \eqref{eq:three-log-bound}, we obtain
\[
 \frac{m'-2}
 {(5\log m'-\log(98\log10))^2\log\!\bigl(39(m'+1)\bigr)}
 <4.436\cdot10^{13}.
\]
The left-hand side is increasing for $m'\geq10^3$ and exceeds
$4.53\cdot10^{13}$ at $m'=1.15\cdot10^{20}$.
Thus $m'<1.15\cdot10^{20}$, a contradiction.
\end{proof}

\begin{proposition}[Degree-one triples]\label{prop:degree-one-upper}
If a triple of degree one admits an irregular extension, then
\[
 c<10^{162}.
\]
\end{proposition}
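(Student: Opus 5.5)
The argument mirrors Proposition~\ref{prop:euler-upper}, with the lower bound $m>c^{1/8}$ of Proposition~\ref{prop:degree-one-m-lower} in place of Proposition~\ref{prop:euler-m5}. Suppose a degree-one triple admits an irregular extension and, for a contradiction, that $c\geq10^{162}$. By Lemma~\ref{lem:irregular-above-regular} we have $d>d_+(a,b,c)$, so Proposition~\ref{prop:three-log-bound} applies.

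First we relate $m$ to the index $m'$ in \eqref{eq:three-log-bound}. In both types of Lemma~\ref{lem:solution-types}, $m'\geq2m$. Hence $m'>2c^{1/8}$, which gives
\[
 \log c<8\log(m'/2),\qquad m'>2\cdot10^{162/8}>2.1\cdot10^{20}.
\]
This is the degree-one analogue of $m'^5>c\log c$. It is weaker, since here $c$ is only polynomial of degree $8$ in $m'$ rather than $5$, and no extra factor $\log c$ is available.

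Next we substitute into \eqref{eq:three-log-bound}:
\[
 \frac{m'-2}{64\bigl(\log(m'/2)\bigr)^2\log\bigl(39(m'+1)\bigr)}<4.436\cdot10^{13}.
\]
The left-hand side is increasing for $m'\geq10^3$, as in the Euler case. We then evaluate it at the threshold. At $m'\approx2\cdot10^{20}$ we have $\log(m'/2)\approx46.05$ and $\log(39m')\approx50.4$. The denominator is then about $64\cdot2121\cdot50.4\approx6.84\cdot10^{6}$, so the quotient is about $2.9\cdot10^{13}$, which is below the constant. The bound $10^{162}$ therefore sits just beyond this crossover. The crossover is near $m'\approx3\cdot10^{20}$, where $c\approx(m'/2)^8\approx10^{161}$.

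The main obstacle is purely numerical. We must pin down the crossover point $m_0'$ where the left side reaches $4.436\cdot10^{13}$. Then we check that $c\geq10^{162}$ forces $m'>2\cdot10^{20.25}\approx3.56\cdot10^{20}$, and that the inequality already fails there. Concretely, at $m'=3.5\cdot10^{20}$ the quotient is roughly $3.5\cdot10^{20}/(64\cdot46.6^2\cdot51.0)\approx4.96\cdot10^{13}>4.436\cdot10^{13}$. This contradicts \eqref{eq:three-log-bound} and proves $c<10^{162}$. If the margin turns out too thin, we would instead use $m'\geq2m+1$ in type II, or sharpen $\log c<8\log(m'/2)$ to the exact inequality. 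Neither changes the structure of the argument.
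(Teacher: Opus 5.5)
Your proposal is correct and follows the paper's proof: you combine $m'\geq2m>2c^{1/8}$ from Proposition~\ref{prop:degree-one-m-lower} with \eqref{eq:three-log-bound}, obtain the same inequality with constant $64\cdot4.436\cdot10^{13}=2.83904\cdot10^{15}$, and use monotonicity of the left-hand side. The only cosmetic difference is that you argue by contradiction, using $m'>2\cdot10^{20.25}$ and checking failure at $3.5\cdot10^{20}$; the paper instead extracts $m'<3.2\cdot10^{20}$ directly and concludes $c<(m'/2)^8<4.3\cdot10^{161}$.
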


\begin{proof}
By Proposition~\ref{prop:degree-one-m-lower}, $m'\geq2m>2c^{1/8}$.
Hence $\log c<8\log(m'/2)$, and \eqref{eq:three-log-bound} gives
\[
 \frac{m'-2}{(\log(m'/2))^2\log\!\bigl(39(m'+1)\bigr)}
 <2.83904\cdot10^{15}.
\]
For $m'\geq10^3$, the left-hand side is increasing and exceeds
$2.90\cdot10^{15}$ at $m'=3.2\cdot10^{20}$. Thus $m'<3.2\cdot10^{20}$,
which also holds when $m'<10^3$. Consequently,
\[
 c<(m'/2)^8<4.3\cdot10^{161}<10^{162}.
\]
\end{proof}

\begin{proposition}[Higher-degree triples]\label{prop:higher-upper}
If a triple of degree at least two admits an irregular extension, then
\[
 c<10^{338}.
\]
\end{proposition}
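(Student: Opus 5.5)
We follow the pattern of Propositions~\ref{prop:euler-upper} and \ref{prop:degree-one-upper}. Suppose that $c\geq10^{338}$ and that a triple of degree at least two admits an irregular extension $d>d_+(a,b,c)$ (Lemma~\ref{lem:irregular-above-regular}). Proposition~\ref{prop:higher-index-lower} gives $m>c^{1/16}/\sqrt3$. In type I we have $m'=2m$, and in type II $m'=2m+1$, so in either case $m'\geq2m>2c^{1/16}/\sqrt3$. Equivalently,
\[
 \log c<16\log\!\bigl(\sqrt3\,m'/2\bigr).
\]
The assumption $c\geq10^{338}$ also gives the explicit lower bound $m'>2\cdot10^{338/16}/\sqrt3\approx1.5\cdot10^{21}$.

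Next we substitute this bound on $\log c$ into the three-logarithm estimate \eqref{eq:three-log-bound}, which holds for every irregular extension. This yields
\[
 \frac{m'-2}{\bigl(\log(\sqrt3\,m'/2)\bigr)^2\log\!\bigl(39(m'+1)\bigr)}
 <256\cdot4.436\cdot10^{13}<1.1357\cdot10^{16}.
\]
The left-hand side is increasing for $m'\geq10^3$: its derivative is dominated by the linear numerator against polylogarithmic growth in the denominator. It therefore suffices to evaluate it at one explicit point $m'_0$ and check that the value exceeds the right-hand side. A rough estimate places $m'_0$ near $1.5\cdot10^{21}$. There $\log(\sqrt3 m'/2)\approx48.6$, the square is about $2365$, and $\log(39m')\approx52.4$. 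The denominator is then about $1.24\cdot10^5$, and the left-hand side is about $1.2\cdot10^{16}$, just above $1.1357\cdot10^{16}$.

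From this we get $m'<m'_0$, which also covers the range $m'<10^3$. Consequently
\[
 c<\bigl(\sqrt3\,m'/2\bigr)^{16}<\bigl(\sqrt3\,m'_0/2\bigr)^{16}.
\]
For $m'_0\approx1.5\cdot10^{21}$ this is about $(1.3\cdot10^{21})^{16}\approx10^{337.8}<10^{338}$, which contradicts the assumption $c\geq10^{338}$. Note that a lower starting assumption on $c$ is not needed: the bound comes out of the monotonicity argument directly.

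The main obstacle is the numerical calibration. The margin is thin, since the constant $256\cdot4.436\cdot10^{13}$ together with the sixteenth power leaves very little room below $10^{338}$. We must choose $m'_0$ so that the left-hand side at $m'_0$ provably exceeds $1.1357\cdot10^{16}$, using careful upper bounds on the logarithms, while at the same time $(\sqrt3 m'_0/2)^{16}<10^{338}$. If the two requirements do not fit together, the first thing to try is sharpening the constant by using the exact index relation $m'\geq2m$ (or $2m+1$). The next is using $\log\gamma<2\log c$ more carefully inside the proof of Proposition~\ref{prop:three-log-bound}, rather than the crude form of \eqref{eq:three-log-bound}.
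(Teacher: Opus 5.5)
Your proposal is correct and is essentially the paper's proof. You combine $m'\geq 2m>(2/\sqrt3)\,c^{1/16}$ from Proposition~\ref{prop:higher-index-lower} with \eqref{eq:three-log-bound}, which gives the bound with constant $256\cdot4.436\cdot10^{13}=1.135616\cdot10^{16}$. You then use monotonicity and evaluate at $m'=1.5\cdot10^{21}$, where the left side exceeds $1.21\cdot10^{16}$, and conclude $c<(\sqrt3\,m'/2)^{16}<6.6\cdot10^{337}<10^{338}$. These numbers already fit, so the fallback sharpenings in your last paragraph are not needed.
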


\begin{proof}
Proposition~\ref{prop:higher-index-lower} gives
$m'\geq2m>(2/\sqrt3)c^{1/16}$.
Combining $\log c<16\log(\sqrt3\,m'/2)$ with
\eqref{eq:three-log-bound}, we obtain
\[
 \frac{m'-2}{(\log(\sqrt3\,m'/2))^2\log\!\bigl(39(m'+1)\bigr)}
 <1.135616\cdot10^{16}.
\]
For $m'\geq10^3$, the left-hand side is increasing and exceeds
$1.21\cdot10^{16}$ at $m'=1.5\cdot10^{21}$. Hence $m'<1.5\cdot10^{21}$,
also trivially true for $m'<10^3$, and
\[
 c<(\sqrt3\,m'/2)^{16}<6.6\cdot10^{337}<10^{338}.
\]
\end{proof}

\begin{proof}[Proof of Theorem~\ref{thm:main}]
Let $\{a,b,c,d\}$ be an irregular Diophantine quadruple with $a<b<c<d$.
By Lemma~\ref{lem:irregular-above-regular}, we have $d>d_+(a,b,c)$.
Propositions~\ref{prop:euler-upper}, \ref{prop:degree-one-upper},
and~\ref{prop:higher-upper} therefore give $c<10^{338}$.

The function $(u-2)/\log(39(u+1))$ is increasing for $u>2$.
Its value at $u=1.45\cdot10^{21}$ exceeds
$4.436\cdot10^{13}(338\log10)^2$, so \eqref{eq:three-log-bound} gives
$m'<1.45\cdot10^{21}$.
From \eqref{eq:pell-sequence-ac} and \eqref{eq:common-coordinates},
\[
 x=\frac12\left(
 \left(x_1+z_1\sqrt{a/c}\right)\beta^{m'}
 +\left(x_1-z_1\sqrt{a/c}\right)\beta^{-m'}\right)
 <x_1\beta^{m'}.
\]
Both coefficients are positive because $cx_1^2-az_1^2=c-a>0$.
Since $x_1\in\{1,r\}$, we have $x_1^2/a\leq b+1\leq c$.
Also $\beta<2c$, and hence
$d=(x^2-1)/a<c(2c)^{2m'}$. Consequently,
\begin{align*}
 \log_{10}d
 &<\log_{10}c+2m'\log_{10}(2c)\\
 &<338+2.9\cdot10^{21}(338+\log_{10}2)<10^{24}.
\end{align*}
This proves the asserted bounds. Since all four elements are positive
integers bounded by $10^{10^{24}}$, the finiteness assertion follows.
\end{proof}

\section*{Note on this version}

The mathematical ideas in this paper are due to the authors.
AI tools developed by OpenAI(ChatGPT)  were used to assist with checking the
arguments and computing the numerical constants.

This version was prepared under time constraints. The authors take
responsibility for the correctness of the mathematical arguments in
this version. A subsequent version will give a fuller account of the
literature and further refine the notation, exposition, and logical
organization of the proofs.


\begin{thebibliography}{99}

\bibitem{AdedjiFilipinTogbePell}
K.~N. Ad\'edji, A.~Filipin, and A.~Togb\'e,
\emph{On the family of Diophantine pairs \(\{P_{2k},2P_{2k+2}\}\)},
Fibonacci Quart. \textbf{60} (2022), 25--39.

\bibitem{AdedjiHePinterTogbe}
K.~N. Ad\'edji, B.~He, \'{A}.~Pint\'er, and A.~Togb\'e,
\emph{On the Diophantine pair \(\{a,3a\}\)},
J. Number Theory \textbf{227} (2021), 330--351.

\bibitem{AdzagaFilipinJurasic}
N.~Ad\v{z}aga, A.~Filipin, and A.~Jurasi\'c,
\emph{The extensibility of the Diophantine triple \(\{2,b,c\}\)},
An. \c{S}tiin\c{t}. Univ. Ovidius Constan\c{t}a Ser. Mat.
\textbf{29} (2021), no.~2, 5--24.

\bibitem{ArkinHoggattStraus}
J.~Arkin, V.~E. Hoggatt, Jr., and E.~G. Straus,
\emph{On Euler's solution of a problem of Diophantus},
Fibonacci Quart. \textbf{17} (1979), 333--339.

\bibitem{AssafGueron}
E.~Assaf and S.~Gueron,
\emph{Characterization of regular Diophantine quadruples},
Elem. Math. \textbf{56} (2001), 71--81.

\bibitem{BakerDavenport}
A.~Baker and H.~Davenport,
\emph{The equations \(3x^2-2=y^2\) and \(8x^2-7=z^2\)},
Quart. J. Math. Oxford Ser. (2) \textbf{20} (1969), 129--137.

\bibitem{BugeaudDujellaMignotte}
Y.~Bugeaud, A.~Dujella, and M.~Mignotte,
\emph{On the family of Diophantine triples \(\{k-1,k+1,16k^3-4k\}\)},
Glasgow Math. J. \textbf{49} (2007), 333--344.

\bibitem{CipuDujellaFujita}
M.~Cipu, A.~Dujella, and Y.~Fujita,
\emph{Extensions of a Diophantine triple by adjoining smaller elements},
Mediterr. J. Math. \textbf{19} (2022), Art. 187.

\bibitem{CipuFilipinFujitaBoundsII}
M.~Cipu, A.~Filipin, and Y.~Fujita,
\emph{Bounds for Diophantine quintuples II},
Publ. Math. Debrecen \textbf{88} (2016), 59--78.

\bibitem{CipuFilipinFujitaFamily}
M.~Cipu, A.~Filipin, and Y.~Fujita,
\emph{An infinite two-parameter family of Diophantine triples},
Bull. Malays. Math. Sci. Soc. \textbf{43} (2020), 481--498.

\bibitem{CipuFilipinFujitaPairs}
M.~Cipu, A.~Filipin, and Y.~Fujita,
\emph{Diophantine pairs that induce certain Diophantine triples},
J. Number Theory \textbf{210} (2020), 433--475.

\bibitem{CipuFujitaMignotte}
M.~Cipu, Y.~Fujita, and M.~Mignotte,
\emph{Two-parameter families of uniquely extendable Diophantine triples},
Sci. China Math. \textbf{61} (2018), 421--438.

\bibitem{CipuFujitaMignotteThree}
M.~Cipu, Y.~Fujita, and M.~Mignotte,
\emph{Diophantine triples with three parameters},
Res. Number Theory \textbf{10} (2024), Art.~44.

\bibitem{CipuFujitaMiyazaki}
M.~Cipu, Y.~Fujita, and T.~Miyazaki,
\emph{On the number of extensions of a Diophantine triple},
Int. J. Number Theory \textbf{14} (2018), 899--917.

\bibitem{DicksonHistory}
L.~E. Dickson,
\emph{History of the Theory of Numbers, Vol.~II},
Chelsea, New York, 1966, 514--519.

\bibitem{Diophantus}
Diophantus of Alexandria,
\emph{Arithmetics and the Book of Polygonal Numbers},
I.~G. Bashmakova (ed.), Nauka, Moscow, 1974, 103--104, 232.

\bibitem{DujellaParametric}
A.~Dujella,
\emph{The problem of the extension of a parametric family of Diophantine triples},
Publ. Math. Debrecen \textbf{51} (1997), 311--322.

\bibitem{DujellaHoggattBergum}
A.~Dujella,
\emph{A proof of the Hoggatt--Bergum conjecture},
Proc. Amer. Math. Soc. \textbf{127} (1999), 1999--2005.

\bibitem{DujellaAbsolute}
A.~Dujella,
\emph{An absolute bound for the size of Diophantine \(m\)-tuples},
J. Number Theory \textbf{89} (2001), 126--150.

\bibitem{DujellaFiniteness}
A.~Dujella,
\emph{There are only finitely many Diophantine quintuples},
J. Reine Angew. Math. \textbf{566} (2004), 183--214.

\bibitem{DujellaPetho}
A.~Dujella and A.~Peth\H{o},
\emph{A generalization of a theorem of Baker and Davenport},
Quart. J. Math. Oxford Ser. (2) \textbf{49} (1998), 291--306.

\bibitem{FilipinFujitaTogbeExamples}
A.~Filipin, Y.~Fujita, and A.~Togb\'e,
\emph{The extendibility of Diophantine pairs II: Examples},
J. Number Theory \textbf{145} (2014), 604--631.

\bibitem{FujitaPair}
Y.~Fujita,
\emph{The extensibility of Diophantine pairs \(\{k-1,k+1\}\)},
J. Number Theory \textbf{128} (2008), 322--353.

\bibitem{FujitaQuintupleRegular}
Y.~Fujita,
\emph{Any Diophantine quintuple contains a regular Diophantine quadruple},
J. Number Theory \textbf{129} (2009), 1678--1697.

\bibitem{FujitaFixed}
Y.~Fujita,
\emph{The number of irregular Diophantine quadruples for a fixed Diophantine pair or triple},
in \emph{Lie Groups, Number Theory, and Vertex Algebras},
Contemp. Math. \textbf{768}, Amer. Math. Soc., Providence, RI, 2021, 105--118.

\bibitem{FujitaMiyazaki}
Y.~Fujita and T.~Miyazaki,
\emph{The regularity of Diophantine quadruples},
Trans. Amer. Math. Soc. \textbf{370} (2018), 3803--3831.

\bibitem{GibbsRegular}
P.~E. Gibbs,
\emph{A generalised Stern--Brocot tree from regular Diophantine quadruples},
arXiv:math/9903035 (1999).

\bibitem{HePinterTogbeYang}
B.~He, \'{A}.~Pint\'er, A.~Togb\'e, and S.~Yang,
\emph{Another generalization of a theorem of Baker and Davenport},
J. Number Theory \textbf{182} (2018), 325--343.

\bibitem{HeEtAlPair}
B.~He, K.~Pu, R.~Shen, and A.~Togb\'e,
\emph{A note on the regularity of the Diophantine pair \(\{k,4k\pm4\}\)},
J. Th\'eor. Nombres Bordeaux \textbf{30} (2018), 879--892.

\bibitem{HeTogbeFamilyI}
B.~He and A.~Togb\'e,
\emph{On a family of Diophantine triples
\(\{k,A^2k+2A,(A+1)^2k+2(A+1)\}\) with two parameters},
Acta Math. Hungar. \textbf{124} (2009), 99--113.

\bibitem{HeTogbeK4K9K}
B.~He and A.~Togb\'e,
\emph{On the family of Diophantine triples
\(\{k+1,4k,9k+3\}\)},
Period. Math. Hungar. \textbf{58} (2009), 59--70.

\bibitem{HeTogbeFamilyII}
B.~He and A.~Togb\'e,
\emph{On a family of Diophantine triples
\(\{k,A^2k+2A,(A+1)^2k+2(A+1)\}\) with two parameters II},
Period. Math. Hungar. \textbf{64} (2012), 1--10.

\bibitem{HeTogbeZiegler}
B.~He, A.~Togb\'e, and V.~Ziegler,
\emph{There is no Diophantine quintuple},
Trans. Amer. Math. Soc. \textbf{371} (2019), 6665--6709.

\bibitem{Matveev}
E.~M. Matveev,
\emph{An explicit lower bound for a homogeneous rational linear form in the logarithms of algebraic numbers. II},
Izv. Math. \textbf{64} (2000), 1217--1269.

\bibitem{ParkLeeFibonacci}
J.~Park and J.~B. Lee,
\emph{Some family of Diophantine pairs with Fibonacci numbers},
Indian J. Pure Appl. Math. \textbf{50} (2019), 367--384.

\end{thebibliography}
\end{document}